\documentclass[11pt,reqno]{amsart}
\usepackage[a4paper]{geometry}

\usepackage{amsmath,amssymb,amsfonts,mathrsfs}
\usepackage{graphicx}
\usepackage{placeins}
\usepackage{xcolor}
\makeatletter
\def\hyper@nopatch@thm{}
\makeatother
\usepackage[colorlinks=true,linkcolor=blue,citecolor=blue,urlcolor=blue]{hyperref}

\usepackage{cite}

\newtheorem{theorem}{Theorem}[section]
\newtheorem{lemma}[theorem]{Lemma}
\newtheorem{proposition}[theorem]{Proposition}
\newtheorem{corollary}[theorem]{Corollary}

\theoremstyle{definition}
\newtheorem{definition}[theorem]{Definition}
\newtheorem*{definition*}{Definition}

\theoremstyle{remark}
\newtheorem{remark}{Remark}[section]

\numberwithin{equation}{section}

\begin{document}

\title[LORENTZIAN PRINCIPAL GEOMETRY FROM THE HYPERBOLIC DIRICHLET-TO-NEUMANN MAP]{Global Recovery of Lorentzian Principal Geometry from the Hyperbolic Dirichlet-to-Neumann Map}

\author{Lu Chen}
\address[Lu Chen]{Key Laboratory of Algebraic Lie Theory and Analysis of Ministry of Education, School of Mathematics and Statistics, Beijing Institute of Technology, Beijing, 100081, P.R. China; and MSU-BIT-SMBU Joint Research Center of Applied Mathematics, Shenzhen MSU-BIT University, Shenzhen 518172, P.R. China}
\email{chenlu5818804@163.com}

\author{Yan Jiang}
\address[Yan Jiang]{Department of Mathematics, City University of Hong Kong, Hong Kong SAR, China}
\email{yjian24@cityu.edu.hk}

\author{Hongyu Liu}
\address[Hongyu Liu]{Department of Mathematics, City University of Hong Kong, Hong Kong SAR, China}
\email{hongyu.liuip@gmail.com, hongyliu@cityu.edu.hk}

\author{Longyue Tao}
\address[Longyue Tao]{Department of Mathematics, City University of Hong Kong, Hong Kong SAR, China}
\email{sdyctly@163.com, longyue.tao@my.cityu.edu.hk}

\keywords{Inverse hyperbolic problem, Lorentzian geometry, Riemannian metric, time-dependent wave speed, finite-time Dirichlet-to-Neumann map, global uniqueness}

\subjclass[2020]{Primary 35R30; Secondary 35L05, 35L20, 53C50.}

\date{August 29, 2026}

\hypersetup{
  pdfauthor={Lu Chen, Yan Jiang, Hongyu Liu, and Longyue Tao},
  pdftitle={Global Recovery of Lorentzian Principal Geometry from the Hyperbolic Dirichlet-to-Neumann Map},
  pdfsubject={Global recovery of Lorentzian principal geometry from the hyperbolic Dirichlet-to-Neumann map},
  pdfkeywords={inverse hyperbolic problem, Lorentzian geometry, Riemannian metric, time-dependent wave speed, finite-time Dirichlet-to-Neumann map, global uniqueness}
}

\begin{abstract}
We prove that the full Dirichlet-to-Neumann map on a finite time interval globally determines the Lorentzian metric \(\mathbf G_{g,c}=-c(x,t)^2\mathrm{d}t^2+g(x)\) encoded by the principal symbol of the wave operator.
The result holds in spatial dimensions \(n\geq3\) for multiplicatively separable wave speeds \(c(x,t)=a(x)b(t)\), provided the accumulated effective time during the experiment exceeds the maximal travel time from the boundary to the interior and back.
The data determine both the unknown Riemannian metric \(g(x)\) and the wave speed \(c(x,t)\) throughout the observation cylinder, up to a spatial change of coordinates that fixes the boundary and leaves the measured time unchanged.
In particular, the temporal factor \(b\) is not prescribed but is recovered from the same measurements.
We also construct several examples demonstrating that the dimensional and visibility conditions for uniqueness are sharp.

\end{abstract}

\maketitle
\enlargethispage{2pt}
\section{Introduction and main result}
\label{sec:introduction}

We call the Lorentzian metric determined by the homogeneous second-order symbol of a scalar wave operator its Lorentzian principal geometry.
We study the global recovery of this geometry from the finite-time Dirichlet-to-Neumann map on the full lateral boundary.
Let \((M,g)\) be a smooth compact connected Riemannian manifold with smooth nonempty boundary.
Let \(T>0\), and let \(c\in C^\infty(M\times[0,T])\) be positive.
For a smooth boundary source \(f\) vanishing near \(t=0\), let \(u\) solve
\begin{equation}
\begin{cases}
c(x,t)^{-2}\partial_t^2u-\Delta_gu=0
& \text{in }M\times(0,T),\\
u|_{t=0}=\partial_tu|_{t=0}=0
& \text{in }M,\\
u=f
& \text{on }\partial M\times(0,T).
\end{cases}
\label{eq:main-forward-problem}
\end{equation}
On the spacetime cylinder \(M\times(0,T)\), the principal symbol of the wave operator in \eqref{eq:main-forward-problem} is
\begin{equation}
\sigma_2\bigl(c^{-2}\partial_t^2-\Delta_g\bigr)(x,t;\xi,\tau)
=
-c(x,t)^{-2}\tau^2+|\xi|_g^2.
\label{eq:lorentzian-principal-symbol}
\end{equation}
This is the dual quadratic form of the Lorentzian metric
\begin{equation}
\mathbf G_{g,c}
=
-c(x,t)^2\mathrm{d}t^2+g(x)
\quad\text{on }M\times(0,T).
\label{eq:principal-lorentzian-metric}
\end{equation}
Thus, in the present problem, \(\mathbf G_{g,c}\) is precisely the Lorentzian principal geometry of the wave operator.
The adjective \emph{principal} emphasizes that this geometry is fixed by the homogeneous second-order part of the operator.
The operator in \eqref{eq:main-forward-problem} equals \(-\Box_{\mathbf G_{g,c}}\) plus first-order terms determined by \(g\) and \(c\).
Consequently, the characteristics and finite propagation of \eqref{eq:main-forward-problem} are governed by the null geometry of \(\mathbf G_{g,c}\).
The finite-time Dirichlet-to-Neumann map is
\begin{equation}
\Lambda_{g,c}^Tf
=
\partial_{\nu_g}u|_{\partial M\times(0,T)}.
\label{eq:main-dn-map}
\end{equation}
Here \(\nu_g\) is the outward unit normal of \((M,g)\).
We ask whether \(\Lambda_{g,c}^T\) determines \(\mathbf G_{g,c}\), equivalently \((g,c)\) within the product structure \eqref{eq:principal-lorentzian-metric}.
As in geometric inverse boundary problems, the data are invariant under boundary-fixing diffeomorphisms: if \(\Phi:M\to M\) satisfies \(\Phi|_{\partial M}=\operatorname{Id}\) and \(\widehat\Phi=\Phi\times\operatorname{Id}_{(0,T)}\), then \(\Lambda_{\Phi^*g,\,c\circ\widehat\Phi}^T=\Lambda_{g,c}^T\).
Thus recovery within \eqref{eq:principal-lorentzian-metric} can only be expected up to a product diffeomorphism \(\Phi\times\operatorname{Id}_{(0,T)}\).

We consider a structured class that retains both an unknown spatial geometry and an unknown temporal propagation law.
Let \(\mathcal C^T(M)\) consist of wave speeds of the form
\begin{equation}
c(x,t)=a(x)b(t).
\label{eq:main-separable-speed}
\end{equation}
Here \(a\in C^\infty(M)\) and \(b\in C^\infty([0,T])\) are strictly positive and unknown, and \(b(0)=1\) fixes only their constant scaling ambiguity.

Define the effective time and the metric governing travel times by
\[
 s=B(t):=\int_0^t b(r)\,\mathrm{d}r,
\qquad
S_T=B(T),
\qquad
h=a^{-2}g.
\]
Since \(b>0\), \(B\) is a smooth increasing diffeomorphism from \([0,T]\) onto \([0,S_T]\).
In the coordinates \((x,s)\), the Lorentzian metric in \eqref{eq:principal-lorentzian-metric} becomes
\begin{equation}
\mathbf G_{g,c}
=
a(x)^2\bigl(-\mathrm{d}s^2+h(x)\bigr).
\label{eq:conformally-static-lorentzian-form}
\end{equation}
Thus the class is conformal to a product Lorentzian geometry in the effective time \(s\).
The quantity \(S_T\) measures the propagation available during the experiment, and \(h\) governs travel times.
The physical duration \(T\) alone does not determine the available propagation.
The propagation clock \(B\), the spatial geometry \(g\), and the spatial factor \(a\) are all unknown and must be recovered from the same boundary map.

\subsection{Main theorem}
\label{subsec:problem-data}

For a Riemannian metric \(h\) on \(M\), write
\begin{equation}
\operatorname{rad}(h)
=
\max_{x\in M}\operatorname{dist}_h(x,\partial M)
\label{eq:main-filling-radius}
\end{equation}
for its boundary filling radius.
For \(h=a^{-2}g\), \(\operatorname{rad}(h)\) is the largest travel time from the boundary to a point, and \(2\operatorname{rad}(h)\) is the time required to reach the deepest point and return to the boundary.

\begin{theorem}
\label{thm:simultaneous-recovery}
Let \(T>0\) and \(n\geq3\).
For \(j=1,2\), let \((M_j,g_j)\) be a smooth compact connected \(n\)-dimensional Riemannian manifold with smooth nonempty boundary and let
\[
c_j(x,t)=a_j(x)b_j(t),
\]
where \(a_j\in C^\infty(M_j)\) and \(b_j\in C^\infty([0,T])\) are strictly positive and \(b_j(0)=1\).
Set
\[
\mathbf G_j=-c_j(x,t)^2\mathrm{d}t^2+g_j(x)
\quad\text{on }M_j\times(0,T).
\]
Let \(\varphi:\partial M_1\to\partial M_2\) be a diffeomorphism, and let the scalar pushforward, with no Jacobian factor, be
\begin{equation}
(P_\varphi f)(z,t)=f(\varphi^{-1}(z),t).
\label{eq:boundary-pushforward}
\end{equation}
Assume that the full Dirichlet-to-Neumann maps satisfy
\begin{equation}
\Lambda_{g_1,c_1}^T
=
P_\varphi^{-1}\Lambda_{g_2,c_2}^TP_\varphi
\label{eq:boundary-identification}
\end{equation}
and that each model satisfies the visibility condition
\begin{equation}
\int_0^T b_j(t)\,\mathrm{d}t
>
2\operatorname{rad}\!\bigl(a_j^{-2}g_j\bigr),
\qquad
j=1,2.
\label{eq:main-finite-visibility}
\end{equation}
Then \(\varphi\) extends to a diffeomorphism \(\Phi:M_1\to M_2\) such that, for \(\widehat\Phi=\Phi\times\operatorname{Id}_{(0,T)}\),
\begin{equation}
\widehat\Phi^*\mathbf G_2=\mathbf G_1.
\label{eq:lorentzian-recovery-conclusion}
\end{equation}
Equivalently, the same diffeomorphism satisfies
\begin{equation}
\Phi^*g_2=g_1,
\qquad
c_2(\Phi(x),t)=c_1(x,t)
\quad\text{on }M_1\times[0,T].
\label{eq:simultaneous-recovery-conclusion}
\end{equation}
\end{theorem}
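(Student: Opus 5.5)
We would reduce the problem to the time-independent case by changing to effective time, and then use the boundary control method to recover the conformally product geometry. Set \(s=B_j(t)\) and write the equation as
\[
c_j^{-2}\partial_t^2u-\Delta_{g_j}u = a_j^{-2}\bigl(b_j^{-2}\partial_t^2 u\bigr)-\Delta_{g_j}u .
\]
Since \(\partial_t=b_j\partial_s\), we have \(b_j^{-2}\partial_t^2=\partial_s^2+(b_j'/b_j^2)\partial_s\), so in the variable \(s\) the operator becomes
\[
a_j^{-2}\bigl(\partial_s^2+\beta_j(s)\partial_s\bigr)-\Delta_{g_j},
\]
with a lower-order coefficient \(\beta_j\) depending only on \(s\). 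Multiplying by \(a_j^2\) gives \(\partial_s^2+\beta_j\partial_s-a_j^2\Delta_{g_j}\). Here \(a_j^2\Delta_{g_j}=\Delta_{h_j}+\) a first-order term, with \(h_j=a_j^{-2}g_j\); for \(n\ge3\) this first-order term is nonzero unless \(a_j\) is constant. The Neumann trace picks up the factor \(\partial_{\nu_{g_j}}=a_j^{-1}\partial_{\nu_{h_j}}\).

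\textbf{Step 1: recover the clocks \(B_j\) and the boundary data.} We would use high-frequency boundary Gaussian beams, or the principal symbol of \(\Lambda^T\) as a pseudodifferential operator near non-glancing directions. This recovers, from the boundary map, the boundary restriction of the principal symbol, that is \(c_j|_{\partial M}\) and \(g_j|_{\partial M}\) modulo \(\varphi\). Tangential rays close to the boundary then give \(b_j(t)\,a_j(z)\) for all \(t\). Since \(b_j(0)=1\), this determines \(a_j|_{\partial M}\) and \(b_j\) on \([0,T]\). Hence \(b_1=b_2\), \(B_1=B_2\), and \(S_T\) is common to both models. From the full jet of the symbol we also get the jets of \(g_j\) and \(a_j\) at \(\partial M\) in boundary normal coordinates.

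\textbf{Step 2: boundary control for a conformally static operator.} After Step 1 we can convert \(\Lambda^T\) into the map for the operator \(P_j=\partial_s^2+\beta(s)\partial_s+A_j\) on \([0,S_T]\), with \(\beta\) now known and common. The operator
\[
A_j=-a_j^2\Delta_{g_j}
\]
is symmetric with respect to \(a_j^{-2}\,dV_{g_j}\), which is exactly the volume form that appears in Blagoveshchenskii's identity. We would then carry out the Belishev–Kurylev scheme. First, we compute the inner products \(\langle u^f(s),u^g(s)\rangle_{L^2(a_j^{-2}dV_{g_j})}\) from the data via the modified Blagoveshchenskii ODE; the damping term only changes the ODE in \(s\) by known coefficients. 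Second, Tataru's unique continuation, valid since the coefficients are time-analytic after the reduction (they are in fact independent of \(s\) apart from \(\beta(s)\), which a gauge factor \(e^{-\frac12\int\beta}\) removes), gives approximate controllability of domains of influence \(M(\Gamma,\tau)\) up to depth \(\tau\) in \(h_j\)-distance. This needs a total effective time of \(2\tau\), and the visibility condition \eqref{eq:main-finite-visibility} makes every point reachable. Third, we recover boundary distance functions and reconstruct \((M_j,h_j)\) up to an isometry \(\Phi\) extending \(\varphi\). Finally, the Gel'fand spectral data of \(A_j\), or equivalently the operator \(A_j\) in these coordinates, recovers the first-order coefficient \(-(n-2)a_j\,\nabla_{h_j}\log a_j\), up to normalization.

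\textbf{Step 3: separate \(a\) from \(h\), which is the main obstacle.} Writing \(A_j=-\Delta_{h_j}+(n-2)\langle d\log a_j,\cdot\rangle_{h_j}\), the reconstructed operator determines \(d\log a_j\) whenever \(n\neq2\); this is where \(n\ge3\) enters. The boundary value \(a_j|_{\partial M}\), recovered in Step 1, then fixes the constant. The delicate point is that the boundary control method recovers \(A_j\) only up to gauge transformations \(u\mapsto\kappa u\) with \(\kappa|_{\partial M}=1\), which can alter the first-order term. We would handle this by noting that such a gauge produces a zeroth-order potential, whereas \(A_j\) has none, and that the normalization of the Neumann data pins \(\kappa\) together with the correct volume \(a_j^{-2}dV_{g_j}\). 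With \(a_2\circ\Phi=a_1\) and \(\Phi^*h_2=h_1\) established, we get \(\Phi^*g_2=g_1\) and \(c_2(\Phi(x),t)=c_1(x,t)\), which is \eqref{eq:simultaneous-recovery-conclusion}.
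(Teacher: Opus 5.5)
Most of your outline is sound. Recovering \(b\) from the boundary symbol works. Running Blagoveshchenskii's identity with the known damping term also works: it becomes a hyperbolic equation in \((t,s)\) with known coefficients and zero Cauchy data, solvable on the characteristic triangle. Separating \(a\) from \(h\) by the absence of a zeroth-order term is likewise fine; it is equivalent to the paper's use of the constant function and of \(\varrho=a^{n-2}\) with \(n\geq3\).

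The gap is the unique continuation step in Step~2. The gauge \(e^{\pm\frac12\int\beta}\) does not make the operator time independent. It trades \(\beta(s)\partial_s\) for a zeroth-order term, giving \(\partial_s^2-a^2\Delta_g+q(s)\) with \(q=-\tfrac12\beta'-\tfrac14\beta^2\). Equivalently, \(q=-\vartheta''/\vartheta\), where \(\vartheta=\sqrt b\) is written in effective time, so \(q\) vanishes only when \(\sqrt b\) is affine in \(s\). Since \(b\) is merely \(C^\infty\), \(q\) is in general not analytic in \(s\). Tataru's theorem requires all coefficients, lower-order ones included, to be analytic in time. For merely smooth time-dependent lower-order terms, unique continuation across timelike surfaces is not available in general, which is why the earlier works on time-dependent coefficients cited in the paper assume time analyticity. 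So density of the reachable sets in \(L^2(M(\Gamma,\tau))\) is not justified as written, and neither is everything built on it.

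The missing idea is to remove \(q(s)\) itself, using that it depends on \(s\) only. The paper does this at the level of the data, with a causal Volterra operator \(T_q^{S_T}=I+\mathcal V_K\). Here \(K\) solves the Goursat problem \(K_{ss}-K_{rr}+q(s)K=0\), \(K(s,0)=0\), \(K(s,s)=-\frac12\int_0^s q\). This operator:
\begin{enumerate}
\item[(i)] intertwines \(\partial_s^2+q\) with \(\partial_s^2\) on functions with zero Cauchy data at \(s=0\);
\item[(ii)] is invertible, with inverse of the same triangular form;
\item[(iii)] preserves sources vanishing near \(s=0\);
\item[(iv)] commutes with \(a^2\Delta_g\) and with both boundary traces.
\end{enumerate}
Hence \(R_{g,a}^{S_T}=(T_q^{S_T})^{-1}U_b^{S_T}\Lambda_{g,c}^T(U_b^{S_T})^{-1}T_q^{S_T}\) is the response of the stationary equation \(z_{ss}-a^2\Delta_g z=0\). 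The boundary control method, including Tataru's theorem, then applies verbatim. You could instead keep your time-dependent Gramian and insert a transformation operator in time inside the unique continuation argument, but some such step must be supplied.

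A minor slip: \(a^2\Delta_g=\Delta_h+(n-2)\langle \mathrm{d}\log a,\mathrm{d}\,\cdot\,\rangle_h\), so the sign of the first-order term in your \(A_j\) is reversed. This does not affect the argument.
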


Theorem~\ref{thm:simultaneous-recovery} is our main result.
We next record the positive consequences of the recovery argument on a prescribed Euclidean background, followed by the obstructions that delimit the main result and the associated observation scale.

\begin{corollary}
\label{cor:euclidean-recovery-consequences}
Let \(T>0\), let \(n\geq2\), and let \(\Omega\subset\mathbb R^n\) be a smooth bounded connected domain.
For \(j=1,2\), let
\[
c_j(x,t)=a_j(x)b_j(t),
\]
where \(a_j\in C^\infty(\overline\Omega)\) and \(b_j\in C^\infty([0,T])\) are strictly positive and \(b_j(0)=1\).
Set
\[
h_j=a_j^{-2}\mathrm{d}x^2,
\qquad
r_\Omega
=
\max_{x\in\overline\Omega}
\operatorname{dist}_{\mathrm{d}x^2}(x,\partial\Omega),
\]
and assume that
\[
\Lambda_{\mathrm{d}x^2,c_1}^T
=
\Lambda_{\mathrm{d}x^2,c_2}^T.
\]
Each of the following conditions is sufficient for recovery.
\begin{enumerate}
\item[(i)] If
\[
\int_0^T b_j(t)\,\mathrm{d}t
\geq
2\operatorname{rad}(h_j),
\qquad j=1,2,
\]
then \(c_1=c_2\) on \(\overline\Omega\times[0,T]\).

\item[(ii)] If \(a_j\geq a_*>0\) on \(\overline\Omega\) and
\[
\int_0^T b_j(t)\,\mathrm{d}t
\geq
\frac{2r_\Omega}{a_*},
\qquad j=1,2,
\]
then \(c_1=c_2\) on \(\overline\Omega\times[0,T]\).
\end{enumerate}
Thus recovery on the prescribed Euclidean background holds in every dimension \(n\geq2\), and part~\textup{(i)} includes equality in its visibility condition; part~\textup{(ii)} gives a sufficient criterion uniform over all coefficients with the stated lower bound.
\end{corollary}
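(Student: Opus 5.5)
We prove Corollary~\ref{cor:euclidean-recovery-consequences} directly, reusing the reduction behind Theorem~\ref{thm:simultaneous-recovery}. With the metric fixed to be Euclidean, no boundary-fixing diffeomorphism survives, and the gauge issue disappears.

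\textbf{Step 1: reduce to a conformally static problem.} In the effective time \(s=B_j(t)\) we have \(\partial_t=b_j\partial_s\), so \(\partial_t^2=b_j^2\partial_s^2+b_j'\partial_s\). Dividing by \(b_j^2\) turns the equation in \eqref{eq:main-forward-problem} with \(g=\mathrm dx^2\) into
\[
\partial_s^2u+\beta_j(s)\partial_su-a_j^2\Delta u=0,
\qquad \beta_j=\frac{b_j'}{b_j^2}\circ B_j^{-1}.
\]

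\textbf{Step 2: recover the clock.} We first recover \(b_j\) from the data, on the whole interval \([0,T]\). For this we use boundary determination from high-frequency Dirichlet data localized near a boundary point \((z,t_0)\). This is exactly what Theorem~\ref{thm:simultaneous-recovery} requires, except that the data are now on a fixed Euclidean domain. The principal symbol of \(\Lambda^T\) at \((z,t_0)\) is \(\sqrt{c_j(z,t_0)^{-2}\tau^2-|\xi'|^2}\), so the data determine \(c_j=a_j(z)b_j(t_0)\) on \(\partial\Omega\times[0,T]\). Fixing \(z\) and using \(b_j(0)=1\), we obtain \(a_1|_{\partial\Omega}=a_2|_{\partial\Omega}\) and then \(b_1=b_2\) on \([0,T]\). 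Hence \(B_1=B_2\), \(S_T\) is common, and \(\beta_1=\beta_2\).

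\textbf{Step 3: remove the damping.} The gauge \(u=e^{-\frac12\int\beta}\,v\) converts the equation into \(\partial_s^2v-a_j^2\Delta v+q(s)v=0\), where \(q\) is known and the same for \(j=1,2\). Writing \(a_j^2\Delta=\Delta_{h_j}\) plus a first-order term, the spatial operator \(a_j^2\Delta\) is a weighted Laplacian. It is symmetric in \(L^2(a_j^{-2}\mathrm dx)\), and its geometry is \(h_j=a_j^{-2}\mathrm dx^2\). The two DN maps in \(s\) therefore agree, and so do their restrictions to \(s\in(0,S_T)\).

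\textbf{Step 4: Tataru and the boundary control method.} For an operator \(\partial_s^2-A+q(s)\) with \(A\) time-independent, the weight \(q(s)\) depends only on \(s\), so unique continuation of Tataru type still holds. The Blagoveshchenskii identity needs symmetry in \(s\); to get around this, we take the Fourier/Laplace structure only in space and argue by Gaussian beam or interior-source controllability. My preferred route is to use, from the proof of Theorem~\ref{thm:simultaneous-recovery}, the step that recovers the boundary spectral data of \(A_j\) from the DN map when \(S_T>2\operatorname{rad}(h_j)\). For (i) we need equality allowed, and here the main obstacle appears. I would handle it by approximation: equality still gives controllability of the domains of influence \(\{x:\operatorname{dist}_{h_j}(x,\partial\Omega)<S_T/2\}\). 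These sets exhaust \(\Omega\) up to the null set of points at distance exactly \(\operatorname{rad}(h_j)\). Since the reachable sets are dense, the Gel'fand problem \(A_1\sim A_2\) follows, and so \(h_1\) and \(h_2\) are isometric with the Euclidean gauge fixed.

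\textbf{Step 5: conclude with the diffeomorphism and the dimension count.} The resulting diffeomorphism \(\Phi\) satisfies \(\Phi^*(a_2^{-2}\mathrm dx^2)=a_1^{-2}\mathrm dx^2\). The weight \(a_j^{-2}\mathrm dx\) is also recovered, because the measure is recovered along with the operator. From these two facts \(\Phi\) is a conformal map of \(\mathbb R^n\) domains that preserves the density compatibly. For \(n\ge3\), Liouville's theorem together with \(\Phi|_{\partial\Omega}=\mathrm{Id}\) forces \(\Phi=\mathrm{Id}\). For \(n=2\) we instead use that the density \(a^{-n}=a^{-2}\) and the operator give \(\Delta\) itself, which yields \(\Phi=\mathrm{Id}\) again. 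Hence \(a_1=a_2\), and with Step 2 this gives \(c_1=c_2\).

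For (ii), note that \(a_j\ge a_*\) implies \(\operatorname{dist}_{h_j}\le r_\Omega/a_*\). Therefore \(2\operatorname{rad}(h_j)\le 2r_\Omega/a_*\), and (ii) reduces to (i).
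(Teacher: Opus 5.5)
There are two genuine gaps, both in Step~4. Your recovery of the clock \(b\) and your reduction of (ii) to (i), which is exactly Corollary~\ref{cor:uniform-finite-time}, are fine. (The boundary symbol you wrote is the hyperbolic-region expression rather than the elliptic one \((|\xi'|^2-\tau^2/c^2)^{1/2}\). Either one determines \(c|_{\partial\Omega\times[0,T]}\).)

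\textbf{First gap: the temporal potential \(q(s)\).} You assert that unique continuation of Tataru type survives because \(q\) depends only on \(s\). Tataru's theorem, however, concerns coefficients that are time-independent (or analytic in time), while \(q\) is merely smooth, so this claim is unproved. The alternatives you list (Gaussian beams, interior sources, recovery of boundary spectral data) are not carried out. In particular, the proof of Theorem~\ref{thm:simultaneous-recovery} contains no step recovering boundary spectral data.

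Your worry about the Blagoveshchenskii identity is actually the lesser issue. With \(q\) known, the Gramian solves \((\partial_t^2-\partial_s^2+q(t)-q(s))I=F_{f,k}\) with zero data on the axes, and this is still uniquely solvable. What is unsupported is approximate controllability, i.e.\ unique continuation for the adjoint \(q\)-problem.

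The missing idea is the causal transmutation of Lemmas~\ref{lem:transmutation-kernel} and~\ref{lem:intertwining}. The operator \(T_q^{S_T}=I+\mathcal V_K\), with \(K_{ss}-K_{rr}+qK=0\), \(K(s,0)=0\), \(K(s,s)=-\tfrac12\int_0^s q\), has these properties:
\begin{enumerate}
\item it intertwines \(\partial_s^2+q\) with \(\partial_s^2\);
\item it preserves zero Cauchy data and \(\mathcal D_{S_T}\);
\item it commutes with \(\partial_\nu\).
\end{enumerate}
Together with \(U_b^{S_T}\), it turns \(\Lambda_{c_1}^T=\Lambda_{c_2}^T\) into \(R_{a_1}^{S_T}=R_{a_2}^{S_T}\) (Proposition~\ref{prop:boundary-data-correspondence}). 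After that, every boundary control step concerns a time-independent operator, and nothing about \(q\) needs to be proved.

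\textbf{Second gap: the endpoint case of (i).} Here your argument is not merely incomplete but invalid. Density of terminal waves at time \(R=S_T/2\) is correct, since \(C_j=\{\operatorname{dist}_{h_j}(\cdot,\partial\Omega)=R\}\) is null. But ``dense reachable sets, hence \(A_1\sim A_2\), hence \(h_1\cong h_2\)'' does not follow. Waves generated by controls in \(C_c^\infty((0,R)\times\partial\Omega)\) all vanish near \(C_j\). So the relation \(UA_1=A_2U\) is obtained only on such functions, which need not form a core, and the Gramian cannot see how the operator couples across the deepest set.

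This inference uses nothing Euclidean, and Theorem~\ref{thm:endpoint-gluing-obstruction} refutes it. There the seam is null, terminal waves are dense, and the data agree on \((0,2R)\), yet no isometry extending \(\varphi\) exists.

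What the endpoint data actually give is Lemma~\ref{lem:finite-depth-weighted-reconstruction}: a boundary-fixing isometry \(\Phi^R:(M_1^R,h_1)\to(M_2^R,h_2)\) between the sets of depth less than \(R\), with \((\Phi^R)^*\varrho_2=\varrho_1\). The Euclidean background must then be used locally, as in Proposition~\ref{prop:time-optimal-density}:
\begin{enumerate}
\item Each component of \(M_1^R\) contains a boundary patch.
\item On that component, \(\Phi^R\) is a Euclidean rigid motion when \(n\geq3\) (using \(\varrho\)), or an orientation-preserving conformal, hence holomorphic, map when \(n=2\). In either case it fixes the patch and its inward normal.
\item Hence \(\Phi^R=\operatorname{Id}\) there, by rigidity for \(n\geq3\), or by boundary unique continuation for the harmonic components of \(\Phi^R-\operatorname{Id}\) for \(n=2\).
\item Therefore \(a_1=a_2\) on \(M_1^R\), which is dense because \(C_1\) is null.
\end{enumerate}
Your Step~5 presupposes a global diffeomorphism \(\Phi\) of \(\overline\Omega\), which is not available at the endpoint, so it does not cover this case.
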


The preceding corollary records the positive consequences of fixing the background geometry.
The next theorem collects the obstructions that delineate the scope of Theorem~\ref{thm:simultaneous-recovery} and show that its dimension and visibility conditions are sharp.
The theorem summarizes the individual results proved separately in Section~\ref{sec:consequences-sharpness}.

\begin{theorem}
\label{thm:sharpness-obstructions}
The following statements hold within the multiplicatively separable class \eqref{eq:main-separable-speed}.
\begin{enumerate}
\item[(i)] \emph{Two-dimensional conformal obstruction.}
Let \((M,g_1)\) be a smooth compact connected surface with smooth nonempty boundary, let \(a_1\in C^\infty(M)\) and \(b\in C^\infty([0,\infty))\) be strictly positive with \(b(0)=1\), and set \(c_1=a_1b\).
If \(\kappa\in C^\infty(M)\) is positive with \(\kappa|_{\partial M}=1\), then
\[
g_2=\kappa^2g_1,
\qquad
c_2=\kappa c_1
\]
satisfy \(\Lambda_{g_1,c_1}^T=\Lambda_{g_2,c_2}^T\) for every finite \(T>0\).
If \(\kappa\geq1\) and \(\kappa>1\) on a nonempty interior open set, the two pairs are not related by any boundary-fixing diffeomorphism.

\item[(ii)] \emph{Sharpness of the uniform Euclidean criterion.}
Let \(n\geq2\) and let \(\Omega\subset\mathbb R^n\) be a smooth bounded connected domain, with \(r_\Omega\) as in Corollary~\ref{cor:euclidean-recovery-consequences}.
For every \(0<C<2\) and \(T>0\), there exist \(a_*>0\), distinct speeds \(c_j=a_jb\) with a common strictly positive nonconstant temporal factor \(b\in C^\infty([0,T])\), \(b(0)=1\), and \(a_j\geq a_*\), such that
\[
\int_0^T b(t)\,\mathrm{d}t
>
\frac{Cr_\Omega}{a_*},
\qquad
\Lambda_{\mathrm{d}x^2,c_1}^T
=
\Lambda_{\mathrm{d}x^2,c_2}^T.
\]
Consequently, no universal constant \(C<2\) can replace \(2\) in Corollary~\ref{cor:euclidean-recovery-consequences}\textup{(ii)}.

\item[(iii)] \emph{Finite total effective time over infinite physical time.}
For every \(n\geq2\) and every smooth bounded connected domain \(\Omega\subset\mathbb R^n\), there exist distinct speeds \(c_j=a_jb\) on \(\overline\Omega\times[0,\infty)\), with a common strictly positive normalized temporal factor satisfying
\[
\int_0^\infty b(t)\,\mathrm{d}t<\infty,
\]
such that
\[
\Lambda_{\mathrm{d}x^2,c_1}^T
=
\Lambda_{\mathrm{d}x^2,c_2}^T
\qquad
\text{for every finite }T>0.
\]
They may be chosen with \(a_1=a_2\) near \(\partial\Omega\), \(0<a_2\leq a_1\), and \(a_1\not\equiv a_2\).

\item[(iv)] \emph{Endpoint obstruction on unknown manifolds.}
For every \(n\geq3\) and \(R>0\), there exist smooth compact connected Riemannian manifolds \((M_j,g_j)\) and a boundary diffeomorphism \(\varphi:\partial M_1\to\partial M_2\) such that
\[
\operatorname{rad}(g_1)=\operatorname{rad}(g_2)=R,
\qquad
\Lambda_{g_1,1}^{2R}
=
P_\varphi^{-1}\Lambda_{g_2,1}^{2R}P_\varphi
\]
on the open observation interval \((0,2R)\), but no isometry \(F:(M_1,g_1)\to(M_2,g_2)\) satisfies \(F|_{\partial M_1}=\varphi\).
\end{enumerate}
\end{theorem}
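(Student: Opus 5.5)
We prove the four parts separately. Each needs an explicit construction together with a justification that the Dirichlet-to-Neumann data agree.

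\emph{Part (i).} In dimension two, \(\Delta_{\kappa^2g_1}=\kappa^{-2}\Delta_{g_1}\). Therefore \(c_2^{-2}\partial_t^2u-\Delta_{g_2}u=\kappa^{-2}\bigl(c_1^{-2}\partial_t^2u-\Delta_{g_1}u\bigr)\), and the two problems \eqref{eq:main-forward-problem} have the same solutions \(u\). Since \(\kappa|_{\partial M}=1\), the unit normals \(\nu_{g_2}=\kappa^{-1}\nu_{g_1}\) agree on \(\partial M\), so the Neumann traces, and hence the DN maps, coincide for every \(T\).

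For non-equivalence, suppose \(\Phi\) is boundary-fixing with \(\Phi^*g_2=g_1\). Then \(\operatorname{Area}_{g_2}(M)=\operatorname{Area}_{g_1}(M)\). On the other hand, \(\operatorname{Area}_{g_2}(M)=\int_M\kappa^2\,dA_{g_1}>\operatorname{Area}_{g_1}(M)\) because \(\kappa\ge1\) everywhere and \(\kappa>1\) on an open set. This is a contradiction.

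\emph{Parts (ii) and (iii).} Both rest on a finite speed of propagation argument in the effective time \(s=B(t)\). Take \(a_1,a_2\) that agree outside a small ball \(K\Subset\Omega\) centred at a point \(x_0\) at Euclidean distance about \(r_\Omega\) from \(\partial\Omega\). By finite propagation for the metric \(h_j=a_j^{-2}dx^2\), boundary sources do not reach \(K\) before effective time \(\operatorname{dist}_{h}(\partial\Omega,K)\). A perturbation inside \(K\) then needs at least the same effective time again to return to the boundary. Consequently, if \(S_T<2\operatorname{dist}_h(\partial\Omega,K)\), the solutions agree near \(\partial\Omega\times(0,T)\) and the DN maps coincide.

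For (ii), we choose \(a_1=a_*\) away from \(K\), with \(a_*\) small, and ball radius \(\epsilon\) small. Then \(\operatorname{dist}_h(\partial\Omega,K)\approx(r_\Omega-\epsilon)/a_*\), so for \(C<2\) we can pick \(b\) nonconstant, for example a small bump added to \(1\), with \(Cr_\Omega/a_*<S_T<2(r_\Omega-\epsilon)/a_*\). This is possible since \(C<2\).

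For (iii), we choose \(b>0\) with \(\int_0^\infty b<2\operatorname{dist}_{h}(\partial\Omega,K)\), for instance \(b(t)=e^{-\lambda t}\) with \(\lambda\) large. The same argument then applies simultaneously for all \(T\). We take \(a_2\le a_1\) with equality near \(\partial\Omega\) and \(a_1\not\equiv a_2\) supported in \(K\). Since \(a_2\le a_1\) makes \(h_2\ge h_1\), the \(h_1\)-distance is the binding constraint in both cases.

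\emph{Part (iv).} This is the main obstacle: the data on the open interval \((0,2R)\) must agree while the manifolds are non-isometric. The plan is to take \(M_1\) and \(M_2\) coinciding in the collar \(\{\operatorname{dist}(x,\partial M)<R-\delta\}\), where by the boundary-distance/Tataru-type unique continuation the data are already determined, and to make them differ only in a deep region reached at distance exactly \(R\). The difficulty is that the region at distance \(\ge R-\delta\) is still partly visible.

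The construction I would try first is to attach different caps that touch the level set \(\{\operatorname{dist}=R\}\) only at a single point or lower-dimensional set of deepest points. For example, glue along a sphere \(\{\operatorname{dist}=R-\delta\}\) in a warped-product neighbourhood two metrics that agree up to the deepest point set but differ beyond it. To keep \(\operatorname{rad}=R\), the differing part should be a region whose boundary distance stays \(\le R\) but which is causally invisible on \((0,2R)\), since any wave reaching depth \(\rho\) needs time \(2\rho\) to return. Concretely, I would use two metrics \(dr^2+f_j(r)^2g_{S^{n-1}}\) on balls of the same radius \(R\), with \(f_1=f_2\) for \(r>0\) in a punctured sense and differing only at the centre. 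Since pure centre differences are impossible for smooth metrics, a more likely route is a gluing in which the deep region is an \((n-1)\)-dimensional set at exact distance \(R\), carrying different metrics in directions tangent to it. Here \(n\ge3\) gives room, and unique continuation up to time \(2R\) only recovers the open set \(\{\operatorname{dist}<R\}\). Non-isometry would then be detected by a geometric invariant of the closed deep set, such as its induced metric volume.
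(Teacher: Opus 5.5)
\textbf{Parts (i)--(iii).} These are fine and follow the paper. Part (i) is the paper's conformal-invariance and area argument. Parts (ii) and (iii) use the paper's mechanism: the wave must first reach the perturbation and the discrepancy must then return, which is finite propagation applied twice. There is one routing difference. You apply finite propagation directly to the equation in effective time, where the extra term $p(s)v_s$ is lower order. The paper instead proves $R_{a_1}^{S_T}=R_{a_2}^{S_T}$ for the stationary problem and transfers this to $\Lambda$ via \eqref{eq:physical-from-stationary}. Either route works.

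There is one slip in (ii). For fixed $T$, adding a small bump to $1$ gives $S_T\approx T$, so you cannot also take $a_*$ small. You must either tune $a_*$ to $T$ or choose $b$ with a prescribed integral. The paper fixes $b$ first and then chooses the scale $\varepsilon$ of $a$ in the window $Cr_\Omega/(S_T(1-\delta))<\varepsilon<2d/S_T$.

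\textbf{Part (iv): genuine gap.} You give no construction, and the mechanism you propose cannot work. Suppose $M_1$ and $M_2$ agree, via a boundary-fixing isometry, on the visible region $\{\operatorname{dist}(\cdot,\partial M)<R\}$. The deep set $\{\operatorname{dist}=R\}$ is null and has empty interior (Lemma~\ref{lem:distance-level-null}), so the visible region is dense. The metric on each side of the deep set is therefore the continuous limit of the common visible metric. It follows that the deep set cannot carry ``different metrics in directions tangent to it''. Its induced metric and volume, when it is a smooth hypersurface, are limits of those of the visible level sets $\{\operatorname{dist}=R-\epsilon\}$, so they coincide in both manifolds. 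Your own observation that ``pure centre differences are impossible'' kills the hypersurface version for the same reason. Note also that the statement only requires that no isometry restrict to $\varphi$ on the boundary, not that $M_1$ and $M_2$ be non-isometric. A metric invariant is the wrong tool for this.

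\textbf{The missing idea.} The only freedom left is topological: how the two sides of the deep hypersurface are identified. The paper takes $N_\pm=[0,R]\times\Sigma$ with $g_\pm=\mathrm{d}r^2+k_\pm(r)$, where $k_\pm=k_0$ near $r=R$. It glues $\{R\}\times\Sigma$ either by $\operatorname{Id}$ or by a nontrivial isometry $\iota$ of $(\Sigma,k_0)$.
\begin{itemize}
\item \emph{Radius.} Both manifolds have $\operatorname{rad}=R$.
\item \emph{Data agree on $(0,2R)$.} This follows from finite propagation applied twice through the transmission conditions at the seam, not from unique continuation. Waves from $B_\pm$ reach the seam only at time $R$, and any discrepancy created there needs another $R$ to return.
\item \emph{No isometry extends $\varphi$.} An isometry extending $\varphi$ fixes $B_\pm$ pointwise together with the inward normal. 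It is therefore the identity on each connected piece $N_\pm$. Compatibility with the two gluings would then force $\iota=\operatorname{Id}$.
\end{itemize}
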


\begin{remark}
\label{rem:sharpness-main-theorem}
The detailed results in Section~\ref{sec:consequences-sharpness} explain the scope and sharpness of Theorem~\ref{thm:simultaneous-recovery}.
Theorem~\ref{thm:two-dimensional-conformal-obstruction} and Corollary~\ref{cor:fixed-euclidean-recovery} locate the restriction \(n\geq3\): simultaneous recovery of an unknown geometry and wave speed fails in dimension two, whereas recovery on a prescribed Euclidean background remains valid for \(n\geq2\).
Corollary~\ref{cor:fixed-euclidean-recovery} and Theorem~\ref{thm:endpoint-gluing-obstruction} show that equality in \eqref{eq:main-finite-visibility} is sufficient on a prescribed Euclidean background but not on an unknown manifold.
Finally, Corollary~\ref{cor:uniform-finite-time} and Theorem~\ref{thm:causal-obstruction} identify the sharp constant \(2\) in the uniform Euclidean criterion, while Corollary~\ref{cor:infinite-time-threshold} shows that physical duration alone cannot replace total effective time.
\end{remark}

\subsection{Motivation and related results}\label{subsec:motivation-related-results}

The global inverse problem of recovering a smooth time-dependent Lorentzian metric from linear boundary data remains largely open \cite{kurylev2018inverse,lassas2018inverse}.
In unrestricted coefficient classes, however, global uniqueness is impossible: counterexamples are known for both partial and full boundary measurements \cite{liimatainen2022counterexamples,oksanen2026counterexamples}.
Thus appropriate structural assumptions on the Lorentzian geometry are indispensable.
Accordingly, we consider the class of Lorentzian metrics \(\mathbf G_{g,c}\) associated with the multiplicatively separable speeds in \eqref{eq:main-separable-speed}, allowing both the background Riemannian metric \(g\) and the full wave speed \(c=ab\) to be unknown.
In spatial dimensions \(n\geq3\), Theorem~\ref{thm:simultaneous-recovery} determines \(\mathbf G_{g,c}\), equivalently both \(g\) and \(c\), modulo the natural time-preserving spatial gauge, from a single finite-time Dirichlet-to-Neumann map under the visibility condition \eqref{eq:main-finite-visibility}.
Corollary~\ref{cor:euclidean-recovery-consequences} gives the corresponding extensions when the Euclidean background is prescribed, while Theorem~\ref{thm:sharpness-obstructions} proves that both the dimensional restriction and the visibility requirements in Theorem~\ref{thm:simultaneous-recovery} are sharp.

For wave operators with smooth Lorentzian principal metrics, boundary measurements have been shown, under suitable assumptions, to determine boundary jets, null scattering information, and ray transforms of lower-order terms \cite{stefanov2018inverse,oksanen2024inverse}.
Passing from such information to interior recovery is a rigidity problem.
Lightlike strict convexity yields boundary determination of the metric jet, while analyticity gives local rigidity, in both cases up to the natural gauge \cite{stefanov2025boundary}.
Global smooth rigidity relative to Minkowski space is known for compactly supported perturbations from measurements over the entire time axis \cite{oksanen2026rigidity}, and semiglobal uniqueness is available when one perturbation of Minkowski space is small while the other may be large \cite{oksanen2026semiglobal}.
Other global results for unknown static metrics with time-dependent lower-order terms, or for time-dependent Lorentzian principal parts, rely on time analyticity together with long-time geometric control assumptions \cite{eskin2007inverse,eskin2017general}.

Complementary approaches change either the unknowns or the data.
When the propagation geometry is prescribed, time-dependent scalar, magnetic, and other lower-order coefficients can be recovered under geometric or gauge assumptions \cite{stefanov1989uniqueness,salazar2013determination,kian2019recovery,feizmohammadi2021recovery,feizmohammadi2022global,alexakis2022lorentzian,alexakis2025lorentzian}.
Another approach formulates the recovery of time-dependent coefficients as a dynamic operator-identification problem and studies regularity and differentiability of the parameter-to-solution map, local ill-posedness, and regularized reconstruction \cite{gerken2020dynamicregularity,gerken2020dynamic}.
Nonlinear source-to-solution data provide a different route to Lorentzian geometry through wave interactions \cite{kurylev2018inverse,lassas2018inverse,hintz2022dirichlet}.
By contrast, the present work recovers the Lorentzian metric \(\mathbf G_{g,c}\) from the full Dirichlet-to-Neumann map on a single finite observation interval; equivalently, it determines both \(g\) and \(c=ab\), including the temporal factor \(b\), without time analyticity, smallness, simplicity, or nontrapping assumptions.

A related precedent for multiplicative separation is the one-dimensional first-order model of \cite{wall1999inverse}, where a downstream trace determines one factor in a separable speed when the other is prescribed.
The present problem instead concerns recovery of the Lorentzian metric associated with a multidimensional second-order wave equation, with the background geometry and both separated factors simultaneously unknown.

Classical finite-time boundary control for time-independent waves shows that boundary responses on the round-trip scale determine the visible Riemannian or acoustic structure \cite{belishev1997boundary,dehoop2018recovery}.
Related reconstruction, continuation, and stability results are developed in \cite{belishev1991boundary,belishev1992reconstruction,katchalov2001inverse,katchalov2004equivalence,katchalov2004energy,kurylev2002hyperbolic,stefanov1998stability,stefanov2005stable}.
For the separable time dependence considered here, however, a change of time variable alone does not convert the measured response into the stationary data used in these results.
The same measurements must also determine the temporal propagation law, which, combined with the recovered spatial information, yields the original Lorentzian metric in dimensions \(n\geq3\).

Taken together, our results establish global finite-time recovery for a genuinely time-dependent class of Lorentzian metrics in which neither the background geometry nor the full separable wave speed is prescribed.
They also delineate how the prescribed background, the spatial dimension, and the available effective propagation time govern the scope and limits of recovery.

\subsection{Organization of the paper}
\label{subsec:organization}

The argument is developed in four parts.
Section~\ref{sec:boundary-data-correspondence} recovers the temporal propagation law and establishes an information-preserving causal correspondence between the original time-dependent boundary map and the weighted response used for spatial reconstruction.
Section~\ref{sec:finite-time-recovery} establishes finite-time recovery of the weighted geometry \((M,h,\varrho)\) governing travel times under the visibility condition.
Section~\ref{sec:completion-main-theorem} recovers the original metric and the entire time-dependent wave speed, completing the proof of Theorem~\ref{thm:simultaneous-recovery}.
Section~\ref{sec:consequences-sharpness} establishes the scope and sharpness of
the result through the two-dimensional conformal obstruction, recovery on a
prescribed Euclidean background, the optimal uniform observation scale, and
the critical endpoint obstruction on unknown manifolds.

\section{Recovery of the propagation clock and exact correspondence of boundary data}
\label{sec:boundary-data-correspondence}

The boundary principal symbol determines \(g_{\partial M}\) and \(c|_{\partial M\times(0,T)}\), and the normalization \(b(0)=1\) therefore recovers the temporal factor \(b\).
It consequently determines the propagation clock \(B\) and the accumulated effective time
\[
B(t)=\int_0^t b(r)\,\mathrm{d}r,
\qquad
S_T=B(T).
\]
The change of clock alone does not produce a stationary response: it leaves a temporal potential determined by \(b\).
We therefore construct invertible causal operators, also determined by the recovered \(b\), that preserve the boundary source space and transport the full boundary map to and from the stationary response.
This yields the exact two-sided correspondence
\[
\Lambda_{g,c}^T
\quad\longleftrightarrow\quad
\bigl(g_{\partial M},b,R_{g,a}^{S_T}\bigr).
\]
Here \(R_{g,a}^{S_T}\) is the Dirichlet-to-Neumann map for \(z_{ss}-a^2\Delta_gz=0\) on \((0,S_T)\).
In invariant form, \((g_{\partial M},R_{g,a}^{S_T})\) determines the response \(\mathcal B_{h,\varrho}^{S_T}\) of the weighted geometry
\[
h=a^{-2}g,
\qquad
\varrho=a^{n-2}.
\]
This response provides the input for Section~\ref{sec:finite-time-recovery}.
The construction culminates in Proposition~\ref{prop:boundary-data-correspondence}, which proves that the passage is causal, invertible, and information preserving.

\subsection{Functional setting and causality}

We first establish smooth well-posedness and causality of the forward problem on a boundary source space stable under restriction of the terminal time.
These properties provide the functional framework for the boundary determination in Subsection~\ref{subsec:boundary-principal-symbol}.

For \(T>0\), set
\[
Q_T=M\times(0,T),
\qquad
\Sigma_T=\partial M\times(0,T).
\]
For any terminal time \(\tau>0\), define the admissible smooth control space
\[
\mathcal D_\tau
=
\left\{
f\in C^\infty(\partial M\times[0,\tau])
:
f(\cdot,t)=0\text{ for }0\leq t<\varepsilon_f
\text{ for some }\varepsilon_f>0
\right\}.
\]
Because \(f\) vanishes for all sufficiently small \(t\), all its time derivatives vanish at \(t=0\); hence the boundary values agree to every order with the zero initial data.
No condition is imposed near \(t=\tau\).
For a positive \(c\in C^\infty(M\times[0,T])\) and \(f\in\mathcal D_T\), let \(u^f\) denote the solution of \eqref{eq:main-forward-problem}.
We regard the map \(\Lambda_{g,c}^T\) in \eqref{eq:main-dn-map} as an operator on \(\mathcal D_T\).

\begin{proposition}
\label{prop:forward-well-posedness-causality}
For every \(f\in\mathcal D_T\), problem \eqref{eq:main-forward-problem} has a unique solution \(u^f\in C^\infty(M\times[0,T])\), and the map
\[
\Lambda_{g,c}^T
:
\mathcal D_T
\longrightarrow
\mathcal D_T
\]
is well defined.
If \(0<T_0<T\), then
\begin{equation}
\left.
\Lambda_{g,c}^Tf
\right|_{\partial M\times(0,T_0)}
=
\Lambda_{g,c}^{T_0}
\left(
f|_{\partial M\times[0,T_0]}
\right),
\label{eq:finite-time-restriction}
\end{equation}
where the wave speed on the right is the restriction of \(c\) to \(M\times[0,T_0]\).
Wave speeds and inputs that agree up to time \(T_0\) produce the same solution and Neumann trace up to time \(T_0\).
\end{proposition}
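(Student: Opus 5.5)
The plan is to reduce to a problem with homogeneous boundary data and invoke standard smooth well-posedness for second-order hyperbolic equations with time-dependent coefficients. Given \(f\in\mathcal D_T\), I first extend \(f\) to a function \(F\in C^\infty(M\times[0,T])\) with \(F|_{\Sigma_T}=f\) and \(F=0\) for \(t<\varepsilon_f\). For instance, take \(F(x,t)=\chi(x)f(\pi(x),t)\) in a collar \(\partial M\times[0,\delta)\), where \(\pi\) is the nearest-point projection and \(\chi\) is a cutoff equal to \(1\) near \(\partial M\). Writing \(u=F+w\), the new unknown \(w\) solves
\[
c^{-2}\partial_t^2w-\Delta_gw=-(c^{-2}\partial_t^2F-\Delta_gF)=:G,
\]
with \(w|_{\Sigma_T}=0\) and zero Cauchy data. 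The source \(G\) is smooth and vanishes for \(t<\varepsilon_f\). After multiplying by \(c^2\), this becomes \(\partial_t^2w-c^2\Delta_gw=c^2G\). Since \(c^2\Delta_g\) is uniformly elliptic with smooth time-dependent coefficients, and is symmetric with respect to the time-dependent weight \(c^{-2}\mathrm{d}V_g\), the standard energy method applies. I would use the energy
\[
E(t)=\tfrac12\int_M\bigl(c^{-2}|\partial_tw|^2+|\nabla_gw|_g^2\bigr)\,\mathrm dV_g
\]
together with Gronwall's inequality, absorbing the \(\partial_t(c^{-2})\) term. Galerkin approximation with Dirichlet eigenfunctions, or the Lions–Magenes theory, then gives a unique solution \(w\in C([0,T];H^1_0)\cap C^1([0,T];L^2)\).

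For smoothness, I differentiate the equation in \(t\). The function \(\partial_t^kw\) satisfies the same kind of equation, with source \(\partial_t^k(c^2G)\) plus commutator terms involving lower time derivatives of \(w\) multiplied by \(\partial_t^j(c^2)\Delta_g\). Its initial data vanish, because \(G\equiv0\) near \(t=0\), so the homogeneous solution is identically zero there and all compatibility conditions hold trivially. Inductively, \(\partial_t^kw\in C([0,T];H^1_0)\) for every \(k\). Elliptic regularity for \(\Delta_g\) with Dirichlet condition, applied to \(\Delta_g w=c^{-2}\partial_t^2w-G\), then bootstraps spatial regularity, giving \(w\in C^\infty(M\times[0,T])\). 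I expect this regularity bootstrap, with the time-dependent commutators, to be the main technical point, but it is routine. Uniqueness follows from the energy estimate applied to the difference of two smooth solutions.

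It remains to check the mapping property and causality. The Neumann trace \(\partial_{\nu_g}u\) is smooth on \(\partial M\times[0,T]\). It vanishes for \(t<\varepsilon_f\), since by uniqueness \(u\equiv0\) there. Hence \(\Lambda^T_{g,c}f\in\mathcal D_T\). For the restriction identity \eqref{eq:finite-time-restriction}, note that \(u^f|_{M\times[0,T_0]}\) solves the problem on \([0,T_0]\) with data \(f|_{[0,T_0]}\) and speed \(c|_{[0,T_0]}\). By uniqueness on \([0,T_0]\), which is the energy estimate run only up to \(T_0\), it coincides with that solution, and taking normal traces gives the identity. The same argument proves the final sentence: if two speeds and two inputs agree on \([0,T_0]\), both solutions solve the same problem on \(M\times[0,T_0]\), so uniqueness forces equality there, and hence equality of their Neumann traces.
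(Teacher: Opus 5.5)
Your proposal is correct and follows essentially the same route as the paper: collar lifting of $f$, reduction to homogeneous Dirichlet data, the weighted energy with Gronwall, Galerkin existence, time differentiation plus Dirichlet elliptic regularity for smoothness, and uniqueness for the mapping and restriction properties. The only difference is bookkeeping. The paper differentiates the form $c^{-2}\partial_t^2w-\Delta_gw$, so its commutators $\partial_t^k(c^{-2})\,\partial_t^{m-k+2}w$ involve only time derivatives (for $k=1$ the last factor is $\partial_t v_m$, which Gronwall absorbs). Your commutators $\partial_t^j(c^2)\Delta_g\partial_t^{k-j}w$ must first be rewritten through the equation to reach the same closed estimate.
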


\begin{proof}
{
To justify the Dirichlet-to-Neumann map and its causal restriction, choose boundary normal coordinates in a collar and multiply the normal extension of \(f(y,t)\) by a cutoff in the normal variable.
This gives a smooth interior lifting \(F\), supported in the collar, whose boundary trace is \(f\) and which vanishes for all sufficiently small \(t\).
More explicitly, let \(\Psi:\partial M\times[0,\delta)\to U_\delta\subset M\) be a boundary normal collar parametrization with \(\Psi(y,0)=y\), and choose \(\chi\in C_c^\infty([0,\delta))\) such that \(\chi=1\) near \(r=0\).
Define
\[
F(\Psi(y,r),t)=\chi(r)f(y,t),
\qquad
F(x,t)=0\quad\text{for }x\in M\setminus U_\delta.
\]
Then \(F\in C^\infty(M\times[0,T])\), \(F|_{\Sigma_T}=f\), and \(F\) is supported in the collar and vanishes on the same initial time interval as \(f\).
}
Set \(w=u^f-F\).
Then \(w\) has homogeneous Dirichlet data and satisfies
\[
c^{-2}w_{tt}-\Delta_gw=c^{-2}G_F,
\qquad
G_F=-F_{tt}+c^2\Delta_gF.
\]
Multiplication by \(w_t\), integration over \(M\), and the homogeneous boundary condition give
\begin{equation}
\frac{\mathrm{d}}{\mathrm{d}t}E_w(t)
=
\int_M c^{-2}G_F w_t\,\mathrm{d}V_g
-
\int_M c^{-3}c_t|w_t|^2\,\mathrm{d}V_g,
\qquad
E_w(t)
=
\frac12\int_M
\left(c^{-2}|w_t|^2+|\nabla_g w|_g^2\right)\mathrm{d}V_g.
\label{eq:forward-energy-identity}
\end{equation}
{
Set
\[
c_-:=\min_{M\times[0,T]}c>0,
\qquad
c_+:=\max_{M\times[0,T]}c.
\]
The kinetic part of the energy satisfies
\[
\|w_t(\cdot,t)\|_{L^2(M)}^2
\leq
2c_+^2E_w(t).
\]
Hence, by the Cauchy--Schwarz and Young inequalities,
\begin{align*}
\left|
\int_M c^{-2}G_Fw_t\,\mathrm{d}V_g
\right|
&\leq
c_-^{-2}\|G_F(\cdot,t)\|_{L^2(M)}
\|w_t(\cdot,t)\|_{L^2(M)}\\
&\leq
C E_w(t)+C\|G_F(\cdot,t)\|_{L^2(M)}^2,
\end{align*}
while
\[
\left|
\int_M c^{-3}c_t|w_t|^2\,\mathrm{d}V_g
\right|
\leq
c_-^{-3}\|c_t\|_{L^\infty(M\times[0,T])}
\|w_t(\cdot,t)\|_{L^2(M)}^2
\leq
C E_w(t).
\]
Combining these estimates with \eqref{eq:forward-energy-identity} gives
\[
\frac{\mathrm{d}}{\mathrm{d}t}E_w(t)
\leq
C E_w(t)+C\|G_F(\cdot,t)\|_{L^2(M)}^2,
\]
where \(C\) depends only on \(c_-\), \(c_+\), and
\(\|c_t\|_{L^\infty(M\times[0,T])}\).
Since \(E_w(0)=0\), the integral form of Gronwall's inequality yields
\[
E_w(t)
\leq
C\int_0^t e^{C(t-s)}
\|G_F(\cdot,s)\|_{L^2(M)}^2\,\mathrm{d}s,
\qquad 0\leq t\leq T,
\]
and therefore
\begin{equation}
\sup_{0\leq t\leq T}E_w(t)
\leq
C_T\int_0^T\|G_F(\cdot,t)\|_{L^2(M)}^2\,\mathrm{d}t.
\label{eq:forward-energy-estimate}
\end{equation}
}
The same identity applied to the difference of two solutions proves uniqueness.
For existence, let \(\{\phi_k\}_{k\geq1}\) be an \(L^2(M,\mathrm{d}V_g)\)-orthonormal basis of Dirichlet eigenfunctions of \(-\Delta_g\), and seek
\[
w_N(x,t)=\sum_{k=1}^N d_{N,k}(t)\phi_k(x).
\]
Testing against \(\phi_i\), \(1\leq i\leq N\), gives
\[
\int_M c^{-2}w_{N,tt}\phi_i\,\mathrm{d}V_g
+
\int_M\langle\nabla_gw_N,\nabla_g\phi_i\rangle_g\,\mathrm{d}V_g
=
\int_M c^{-2}G_F\phi_i\,\mathrm{d}V_g.
\]
The mass matrix
\[
\mathsf M_N(t)_{ij}
=
\int_M c(x,t)^{-2}\phi_i(x)\phi_j(x)\,\mathrm{d}V_g
\]
is smooth and uniformly positive definite on \([0,T]\), so this system has a unique smooth solution with zero initial data.
Estimate \eqref{eq:forward-energy-estimate} is uniform in \(N\).
Weak compactness and passage to the limit in the Galerkin identities integrated in time give
\[
w\in L^\infty(0,T;H_0^1(M)),
\qquad
w_t\in L^\infty(0,T;L^2(M)).
\]
The limiting equation then gives \(w_{tt}\in L^2(0,T;H^{-1}(M))\), so \(w\) is an energy solution.

It remains to obtain the asserted smoothness.
On the initial interval on which \(F=0\), one has \(G_F=0\), so \eqref{eq:forward-energy-estimate} gives \(w=0\) there.
Consequently, every time derivative of \(w\) has zero initial displacement and velocity.
For \(m\geq0\), differentiating the equation \(m\) times and writing \(v_m=\partial_t^m w\) gives
\begin{equation}
\begin{aligned}
c^{-2}\partial_t^2v_m-\Delta_gv_m
&=H_m,\\
H_m
&=\partial_t^m(c^{-2}G_F)\\
&\quad-
\sum_{k=1}^m
\binom{m}{k}
\partial_t^k(c^{-2})
\partial_t^{m-k+2}w.
\end{aligned}
\label{eq:forward-differentiated-equation}
\end{equation}
These differentiated identities are justified by applying the energy estimate first to time difference quotients, or equivalently to the differentiated Galerkin systems before passing to the limit.
In the term with \(k=1\), the last factor is \(\partial_tv_m\), and the terms with \(k\geq2\) contain only \(v_m\) or lower time derivatives.
Poincar\'e's inequality controls \(\|v_m\|_{L^2(M)}\) by \(\|\nabla_gv_m\|_{L^2(M)}\).
Multiplication of \eqref{eq:forward-differentiated-equation} by \(\partial_tv_m\), followed by Young's and Gronwall's inequalities, therefore gives inductively
\[
\partial_t^m w\in C([0,T];H_0^1(M)),
\qquad
\partial_t^{m+1}w\in C([0,T];L^2(M))
\]
for every \(m\geq0\).
The elliptic identity
\[
-\Delta_g\partial_t^m w
=
\partial_t^m(c^{-2}G_F)
-
\partial_t^m(c^{-2}w_{tt})
\]
and homogeneous Dirichlet boundary data now give, by elliptic regularity and induction in the spatial Sobolev order,
\[
\partial_t^m w\in C([0,T];H^r(M))
\]
for every pair of integers \(m,r\geq0\).
Sobolev embedding in the spatial variables then yields \(w\in C^\infty(M\times[0,T])\), and hence \(u^f=w+F\) has the same regularity.

\enlargethispage{\baselineskip}
If \(c_1=c_2\) and \(f_1=f_2\) on \([0,T_0]\), then \(v=u_1-u_2\) has zero initial and Dirichlet data and solves the homogeneous equation with the common wave speed there.
The energy estimate gives \(v=0\), and smoothness gives \(\partial_{\nu_g}u_1=\partial_{\nu_g}u_2\) on the same interval.
This proves \eqref{eq:finite-time-restriction}.
If \(f=0\) on an initial interval, uniqueness gives \(u^f=\partial_{\nu_g}u^f=0\) there, so \(\Lambda_{g,c}^T\) maps \(\mathcal D_T\) into itself.

The proof is complete.
\end{proof}

The first inverse step is boundary determination.

\subsection{Boundary symbol and recovery of the propagation clock}
\label{subsec:boundary-principal-symbol}

The elliptic principal symbol of the localized Dirichlet-to-Neumann map determines the boundary metric \(g_{\partial M}\) and the boundary value \(c|_{\Sigma_T}\).
In the elliptic region, the dependence of the scalar normal root on the spatial and temporal covectors separates \(g_{\partial M}\) from \(c|_{\Sigma_T}\).
\begin{lemma}
\label{lem:boundary-symbol}
Assume that \(\dim M=n\geq2\), and let \(c\in C^\infty(M\times[0,T])\) be positive.
Fix \((y_0,t_0)\in\Sigma_T\) and a nonzero covector
\[
\eta+\omega\,\mathrm{d}t
\in
T_{(y_0,t_0)}^*\Sigma_T\setminus\{0\},
\]
where \(\eta\in T_{y_0}^*\partial M\) is the spatial tangential covector and \(\omega\in\mathbb R\) is the temporal frequency.
Let \(g_{\partial M}\) denote the metric induced by \(g\) on \(\partial M\).
Suppose that this covector belongs to the elliptic region
\begin{equation}
|\eta|_{g_{\partial M}(y_0)}^2
>
\frac{\omega^2}{c(y_0,t_0)^2},
\label{eq:elliptic-region}
\end{equation}
where \(|\eta|_{g_{\partial M}}\) is the norm on covectors induced by the inverse boundary metric.
After microlocalization in a sufficiently small conic neighborhood of \((y_0,t_0;\eta,\omega)\), the Dirichlet-to-Neumann map \(\Lambda_{g,c}^T\) is a classical pseudodifferential operator of order one, and its homogeneous principal symbol is
\begin{equation}
\sigma_1(\Lambda_{g,c}^T)(y_0,t_0;\eta,\omega)
=
\left(
|\eta|_{g_{\partial M}(y_0)}^2
-
\frac{\omega^2}{c(y_0,t_0)^2}
\right)^{1/2}.
\label{eq:dn-principal-symbol}
\end{equation}
In particular, \(\Lambda_{g,c}^T\) determines \(g_{\partial M}\) and \(c|_{\Sigma_T}\).
\end{lemma}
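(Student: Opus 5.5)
The plan is to construct a microlocal parametrix in the elliptic region and read off the symbol of the Dirichlet-to-Neumann map, following the Lee--Uhlmann factorization method.

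\emph{Coordinates and operator.} Work in boundary normal coordinates \((y,r)\) near \(y_0\), with \(r=\operatorname{dist}_g(\cdot,\partial M)\). In these coordinates \(g=\mathrm{d}r^2+g_{\partial M}(y,r)\) and \(\nu_g=-\partial_r\). The operator \(P=c^{-2}\partial_t^2-\Delta_g\) then reads
\[
P=-\partial_r^2+E(y,r)\partial_r+Q(r;y,t,D_y,D_t),
\]
where \(Q\) is a second-order operator in \((y,t)\), depending smoothly on \(r\), with principal symbol
\[
q_2=|\eta|^2_{g_{\partial M}(y,r)}-c(y,r,t)^{-2}\omega^2 .
\]
Condition \eqref{eq:elliptic-region} says that \(q_2>0\) at \(r=0\) and \((y_0,t_0;\eta,\omega)\). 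By continuity and homogeneity, \(q_2>0\) on \(\{0\le r\le r_0\}\times\Gamma\) for a small conic neighborhood \(\Gamma\).

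\emph{Factorization.} Microlocally on \(\Gamma\), factor
\[
P=(-\partial_r+E-A(r))(\partial_r+A(r))
\]
modulo smoothing, with \(A(r)\) a classical first-order \(\Psi\)DO in \((y,t)\) and \(\sigma_1(A)=\sqrt{q_2}\), the positive root. The lower-order terms of \(A\) are determined recursively, as in Lee--Uhlmann. The Dirichlet problem is then reduced to the backward heat-type equation \(\partial_r u+A(r)u\equiv0\) with \(u|_{r=0}=f\). This equation is generalized-parabolic because \(\operatorname{Re}\sigma_1(A)>0\), so it has a well-defined solution operator. Therefore
\[
\Lambda_{g,c}^Tf=-\partial_r u|_{r=0}\equiv A(0)f
\]
microlocally, modulo smoothing, and \eqref{eq:dn-principal-symbol} follows.

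\emph{Justification (main obstacle).} The key step is showing that the true solution \(u^f\) of Proposition~\ref{prop:forward-well-posedness-causality} agrees microlocally with this parametrix. The difficulty is that the problem is hyperbolic globally, with zero Cauchy data at \(t=0\), so boundary data \(f\) microlocalized in \(\Gamma\) is not localized in time by any elliptic mechanism. I would handle this as follows.
\begin{enumerate}
\item Take \(f\) with wavefront set in \(\Gamma\), supported near \(t_0\) and vanishing near \(t=0\).
\item Build \(\tilde u=\chi(r)u_{\mathrm{par}}\), which satisfies \(P\tilde u\in C^\infty\) and \(\tilde u|_{r=0}=f\) modulo smooth terms, and which is rapidly decaying in \(r\) in the elliptic region. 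It vanishes for small \(t\) because the parametrix can be taken with causal (properly supported) kernels in \(t\).
\item Apply Proposition~\ref{prop:forward-well-posedness-causality} to \(u^f-\tilde u\). It has smooth right-hand side and smooth boundary data, both vanishing near \(t=0\), so it is \(C^\infty\) and its Neumann trace is smooth.
\end{enumerate}
Hence \(\Lambda^T_{g,c}-A(0)\) is smoothing on such \(f\). If causality of the parametrix is awkward, the alternative is to insert a time cutoff and absorb the resulting commutators, which are supported away from \(\Gamma\), into smooth errors.

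\emph{Determination.} Since \(\Lambda_{g,c}^T\) determines its principal symbol on the elliptic region, setting \(\omega=0\) recovers \(|\eta|_{g_{\partial M}(y_0)}\) for all \(\eta\ne0\), because \((\eta,0)\) always lies in the elliptic region. Polarization then recovers \(g_{\partial M}\). Next, for small \(\omega\ne0\), the symbol yields \(\omega^2/c(y_0,t_0)^2\), and hence \(c(y_0,t_0)\) by positivity of \(c\). Continuity extends this to \(t_0\) up to the endpoints if needed.
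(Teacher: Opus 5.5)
Your overall route is the paper's: boundary normal coordinates, a factorization on a conic neighborhood of the elliptic point, a decaying parametrix supported away from \(t=0\), a correction solved by the forward problem, and then \(\omega=0\) followed by \(\omega\neq0\). Two steps need repair.

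First, the displayed factorization is wrong as written. Expanding \((-\partial_r+E-A)(\partial_r+A)\) produces the cross term \(-2A\partial_r\), and its tangential principal part is \(-A^2\) in place of \(+q_2\); neither matches \(P\). The left factor must be \(-\partial_r+E+A\). This gives \(Q\equiv A^2+EA-(\partial_rA)\), hence \(\sigma_1(A)^2=q_2\). Only the right factor \(\partial_r+A\) enters the symbol computation, so nothing downstream changes.

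Second, and more substantively, your justification of the parametrix is qualitative and made one \(f\) at a time. That does not by itself yield the operator statement in the lemma.
\begin{itemize}
\item Proposition~\ref{prop:forward-well-posedness-causality} constructs \(u^f\) only for \(f\in\mathcal D_T\). For such \(f\) the wavefront set is empty, so ``the Neumann trace of \(u^f-\tilde u\) is smooth'' carries no information.
\item For a distribution \(f\) with \(\operatorname{WF}(f)\subset\Gamma\), there is as yet no \(u^f\) to which the proposition applies. Defining \(u^f:=\tilde u+w\) is circular unless you already know it is the continuous extension of the smooth-data map.
\end{itemize}

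What closes the argument is an estimate uniform in \(f\). Let \(Q_0\) be a zeroth-order cutoff microsupported in \(\Gamma\), and build \(\tilde u\) from \(Q_0f\).
\begin{itemize}
\item The remainders \(f\mapsto P\tilde u\) and \(f\mapsto\tilde u|_{r=0}-Q_0f\) are smoothing operators, continuous \(H^{-N}\to H^M\) for all \(N,M\), with ranges vanishing near \(t=0\). The collar-cutoff error is controlled by the factor \(\exp(-\int_0^r\kappa\,\mathrm{d}s)\) on \(\operatorname{supp}\chi'\).
\item The energy and elliptic-regularity bounds behind Proposition~\ref{prop:forward-well-posedness-causality} are quantitative.
\end{itemize}
Hence the correction \(w_f=u^{Q_0f}-\tilde u\) obeys \(\|\partial_{\nu_g}w_f|_{\Sigma_T}\|_{H^M}\leq C_{N,M}\|f\|_{H^{-N}}\). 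This is the paper's estimate \eqref{eq:smoothing-normal-correction}. It gives \(Q_1\Lambda_{g,c}^TQ_0\equiv Q_1A(0)Q_0\bmod\Psi^{-\infty}\), that is, the localized map is a classical pseudodifferential operator with principal symbol \(\sqrt{q_2}\) at \(r=0\). A per-\(f\) wavefront-set statement does not give this.
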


\begin{proof}
{
\medskip
\noindent\emph{Step 1: boundary normal form and the elliptic normal root.}
Choose boundary normal coordinates \((y,r)\) for \(g\) near \(y_0\), with \(r\geq0\) in \(M\), \(r=0\) on \(\partial M\), and \(\nu_g=-\partial_r\) at the boundary.
In these coordinates,
\[
\Delta_g
=
\partial_r^2+\gamma(y,r)\partial_r+\Delta_{g_r},
\]
where \(g_r\) is the induced metric on \(r=\mathrm{const}\).
Write \(x'=(y,t)\), set \(D_r=-i\partial_r\), and denote the wave operator by
\[
L=c^{-2}\partial_t^2-\Delta_g.
\]
Its homogeneous principal symbol, viewed as a polynomial in the normal covariable \(\rho\), is
\[
p_2(r,x';\rho,\eta,\omega)
=
\rho^2+|\eta|_{g_r(y)}^2-\frac{\omega^2}{c(y,r,t)^2}.
\]
To compute the boundary principal symbol, we freeze the coefficients at
\((y_0,0,t_0)\): in the principal part of \(L\), replace \(g_r(y)\) and
\(c(y,r,t)\) by their values \(g_{\partial M}(y_0)\) and \(c(y_0,t_0)\),
respectively, and test the resulting operator with constant coefficients on
\(e^{i((y-y_0)\cdot\eta+(t-t_0)\omega+r\rho)}\).
The corresponding frozen normal
polynomial is
\[
p_{2,0}(\rho)
=
\rho^2+|\eta|_{g_{\partial M}(y_0)}^2
-\frac{\omega^2}{c(y_0,t_0)^2}.
\]
Define
\[
\kappa_0
=
\left(
|\eta|_{g_{\partial M}(y_0)}^2
-
\frac{\omega^2}{c(y_0,t_0)^2}
\right)^{1/2}.
\]
Condition~\eqref{eq:elliptic-region} gives \(\kappa_0>0\), and the normal roots are
\[
\rho_\pm=\pm i\kappa_0.
\]
The root \(i\kappa_0\) produces the factor \(e^{ir(i\kappa_0)}=e^{-r\kappa_0}\), which decays as \(r\) increases into \(M\); the other root produces an exponentially increasing factor.
Thus the elliptic Dirichlet solution selects the root \(i\kappa_0\).
Since
\[
\nu_g=-\partial_r=-iD_r
\quad\text{on }r=0,
\]
the corresponding candidate principal symbol of the normal trace is
\[
-i(i\kappa_0)=\kappa_0.
\]

\medskip
\noindent\emph{Step 2: uniform microlocal factorization.}
To justify the preceding calculation with frozen coefficients, choose an interval \(I\Subset(0,T)\) containing \(t_0\), a boundary coordinate neighborhood \(V\) of \(y_0\), and a sufficiently small conic neighborhood
\[
\Gamma\Subset T^*(V\times I)\setminus0
\]
of \((y_0,t_0;\eta,\omega)\).
After decreasing the collar width \(\delta\) and shrinking \(\Gamma\), continuity and conic homogeneity give an \(\epsilon>0\) such that
\begin{equation}
|\eta|_{g_r(y)}^2-\frac{\omega^2}{c(y,r,t)^2}
\geq
\epsilon\bigl(|\eta|^2+\omega^2\bigr)
\label{eq:uniform-elliptic-cone}
\end{equation}
for \(0\leq r<\delta\) and \((y,t;\eta,\omega)\in\Gamma\).
Hence
\[
\kappa(y,r,t;\eta,\omega)
:=
\left(
|\eta|_{g_r(y)}^2-\frac{\omega^2}{c(y,r,t)^2}
\right)^{1/2}
\]
is a smooth classical symbol of order one on \(\Gamma\), and the two normal roots \(\pm i\kappa\) remain uniformly separated there.

The elliptic boundary factorization on \(\Gamma\) gives tangential classical pseudodifferential operators
\[
A_\pm(r,x',D_{x'})\in\Psi^1,
\qquad
\sigma_1(A_\pm)=\pm i\kappa,
\]
such that, microlocally on \(\Gamma\),
\begin{equation}
L
=
(D_r-A_-)(D_r-A_+)+R_\infty,
\qquad
R_\infty\in C^\infty([0,\delta);\Psi^{-\infty}).
\label{eq:microlocal-normal-factorization}
\end{equation}
Indeed, after the principal symbols \(\pm i\kappa\) have been fixed, comparison of terms of successively lower homogeneity determines the remaining symbol coefficients recursively; Borel summation then gives \(A_\pm\) with a smoothing remainder.
The term \(\gamma\partial_r\) and the derivatives of \(c\) and \(g_r\) enter only in these lower-order coefficients.
This is the scalar elliptic factorization underlying the boundary-symbol argument in \cite[Lemma~2.1]{eskin2017general}; the general symbolic constructions and the corresponding operators on the boundary are given in \cite[Sections~18.1 and~20.1]{hormander1985analysis3} and \cite[Sections~1.2 and~1.4]{grubb1996functional}.

\medskip
\noindent\emph{Step 3: the Poisson parametrix and the causal finite-time map.}
Let \(Q_0,Q_1\in\Psi^0(\Sigma_T)\) be properly supported cutoffs whose kernels lie in \((V\times I)^2\), such that
\[
\operatorname{WF}'(Q_j)\subset\Gamma
\]
and \(Q_1\) equals the identity microlocally on \(\operatorname{WF}'(Q_0)\).
The decaying factor in \eqref{eq:microlocal-normal-factorization} determines a Poisson parametrix \(\mathcal P\) satisfying
\[
(D_r-A_+)\mathcal P\equiv0,
\qquad
\mathcal P|_{r=0}\equiv Q_0
\quad\bmod C^\infty.
\]
Its leading normal factor is
\[
\exp\!\left(
-\int_0^r
\kappa(y,s,t;\eta,\omega)\,\mathrm{d}s
\right).
\]
Choose \(\mathcal P\) properly supported in the tangential variables over \(V\times I\).
Then \(Q_0f\in\mathcal D_T\) and \(\mathcal Pf=0\) near \(t=0\) for every \(f\in C^\infty(\Sigma_T)\).
Let \(\gamma_0u=u|_{\Sigma_T}\) and
\(\gamma_1u=\partial_{\nu_g}u|_{\Sigma_T}\).
Solving the transport
equations to all orders and using
\eqref{eq:microlocal-normal-factorization}, followed by multiplication by a
cutoff in \(r\) that equals one near \(r=0\), gives the remainders
\[
E_{\mathrm{int}}:=L\mathcal P,
\qquad
E_0:=\gamma_0\mathcal P-Q_0.
\]
They are smoothing in the tangential variables: for every \(N,M\geq0\),
\begin{equation}
E_{\mathrm{int}}:H^{-N}(\Sigma_T)\longrightarrow H^M(Q_T),
\qquad
E_0:H^{-N}(\Sigma_T)\longrightarrow H^M(\Sigma_T)
\label{eq:poisson-smoothing-remainders}
\end{equation}
continuously, with ranges supported away from the initial time face.
For the
collar cutoff error this follows from the factor
\(\exp(-\int_0^r\kappa\,\mathrm{d}s)\): on the support of the derivative of
the cutoff, \(r\geq r_0>0\), and
\eqref{eq:uniform-elliptic-cone} gives exponential decay
\(e^{-c r_0|(\eta,\omega)|}\), which belongs to every negative symbol class.

For \(f\in C^\infty(\Sigma_T)\), let \(r_f\) solve the causal correction
problem
\begin{equation}
\begin{cases}
Lr_f=-E_{\mathrm{int}}f&\text{in }Q_T,\\
\gamma_0r_f=-E_0f&\text{on }\Sigma_T,\\
r_f|_{t=0}=\partial_tr_f|_{t=0}=0&\text{in }M.
\end{cases}
\label{eq:causal-parametrix-correction}
\end{equation}
Because the ranges in \eqref{eq:poisson-smoothing-remainders} are supported away from \(t=0\), both the interior forcing and the boundary correction vanish near the initial time and agree with the zero initial data.
Energy estimates followed by hyperbolic boundary regularity, applied at an
arbitrary Sobolev order to the smoothing data in
\eqref{eq:poisson-smoothing-remainders}, give, for every \(N,M\geq0\),
\begin{equation}
\|\gamma_1r_f\|_{H^M(\Sigma_T)}
\leq C_{N,M}\|f\|_{H^{-N}(\Sigma_T)}.
\label{eq:smoothing-normal-correction}
\end{equation}
Thus \(f\mapsto\gamma_1r_f\) is smoothing.
Moreover,
\(\mathcal Pf+r_f\) has zero initial data, solves \(L(\mathcal Pf+r_f)=0\), and has boundary
value \(Q_0f\); uniqueness in
Proposition~\ref{prop:forward-well-posedness-causality} therefore gives
\[
u^{Q_0f}=\mathcal Pf+r_f.
\]
Finally, \((D_r-A_+)\mathcal P\equiv0\) and \(\nu_g=-iD_r\) imply
\[
\gamma_1\mathcal P=-iA_+(0)Q_0\quad\bmod\Psi^{-\infty}.
\]
Combining this identity with \eqref{eq:smoothing-normal-correction} proves
\begin{equation}
Q_1\Lambda_{g,c}^TQ_0
=
-iQ_1A_+(0)Q_0
\quad\bmod\Psi^{-\infty}.
\label{eq:localized-dn-factor}
\end{equation}
The Poisson and trace mapping properties used in
\eqref{eq:poisson-smoothing-remainders}--\eqref{eq:smoothing-normal-correction}
follow from \cite[Sections~1.2 and~1.4]{grubb1996functional}.
Regularity at an arbitrary Sobolev order for the hyperbolic mixed problem that passes
from the smoothing data in \eqref{eq:poisson-smoothing-remainders} to the
estimate for the normal trace \eqref{eq:smoothing-normal-correction} is supplied by
\cite[Theorem~24.1.1]{hormander1985analysis3}.
The comparison between a local
parametrix and an actual global solution modulo smoothing errors is described
in \cite[Section~5]{stefanov2018inverse}; that section invokes
the same theorem for the hyperbolic mixed problem and the true solution.
The cutoffs are supported in \(I\Subset(0,T)\), so the initial and terminal
time faces do not enter the construction.
Moreover,
\eqref{eq:uniform-elliptic-cone} separates \(\Gamma\) from the hyperbolic and
glancing regions.
Because \(\Gamma\) is separated from the hyperbolic and glancing regions, the localized input has no real boundary bicharacteristic issuing from it.
Hence the Fourier integral components associated with propagation do not contribute to \(Q_1\Lambda_{g,c}^TQ_0\), leaving only the pseudodifferential operator in \eqref{eq:localized-dn-factor}; see \cite[Theorem~4.2]{stefanov2018inverse} for the propagated part represented by Fourier integral operators.

Taking principal symbols in \eqref{eq:localized-dn-factor} yields
\[
\sigma_1(\Lambda_{g,c}^T)
=
-i\sigma_1(A_+)
=
\kappa(y,0,t;\eta,\omega).
\]
At \((y_0,t_0;\eta,\omega)\), this is precisely \eqref{eq:dn-principal-symbol}.

\medskip
\noindent\emph{Step 4: recovery of the boundary metric and wave speed.}
First take \(\omega=0\).
Then every \(\eta\neq0\) lies in the elliptic region, and
\[
\sigma_1(\Lambda_{g,c}^T)(y_0,t_0;\eta,0)^2
=
|\eta|_{g_{\partial M}(y_0)}^2
=
g_{\partial M}^{\alpha\beta}(y_0)\eta_\alpha\eta_\beta.
\]
Thus the principal symbol determines the inverse boundary metric, and hence \(g_{\partial M}(y_0)\).
Once this metric is known, choose \(\omega\neq0\) and \(\eta\) satisfying \eqref{eq:elliptic-region}.
Squaring and rearranging \eqref{eq:dn-principal-symbol} gives
\[
c(y_0,t_0)^2
=
\frac{\omega^2}
{|\eta|_{g_{\partial M}(y_0)}^2
-\sigma_1(\Lambda_{g,c}^T)(y_0,t_0;\eta,\omega)^2}.
\]
Positivity determines \(c(y_0,t_0)\) uniquely.
Since \((y_0,t_0)\in\Sigma_T\) was arbitrary, the map determines \(g_{\partial M}\) and \(c|_{\Sigma_T}\).

The proof is complete.
}
\end{proof}

Under the conjugacy \eqref{eq:boundary-identification}, the natural transformation law of the principal symbol gives
\begin{equation}
\varphi^*(g_{2,\partial M_2})
=
g_{1,\partial M_1},
\qquad
c_2(\varphi(y),t)=c_1(y,t),
\qquad
(y,t)\in\partial M_1\times(0,T).
\label{eq:boundary-metric-speed-recovery}
\end{equation}

Smoothness extends the second identity in \eqref{eq:boundary-metric-speed-recovery} to \(t=0\) and \(t=T\) by one-sided limits.
For \(c\in\mathcal C^T(M)\), the normalization \(b(0)=1\) gives \(c(y,0)=a(y)\), and hence, for every \(y_0\in\partial M\),
\begin{equation}
b(t)
=
\frac{c(y_0,t)}{c(y_0,0)},
\qquad
0\leq t\leq T.
\label{eq:recover-b}
\end{equation}
This boundary recovery supplies the temporal information used below.

\begin{lemma}
\label{lem:temporal-factor}
Let \(c_j\in\mathcal C^T(M_j)\), \(j=1,2\), have temporal factors \(b_j\), and suppose that their boundary maps satisfy \eqref{eq:boundary-identification}.
Then
\[
b_1=b_2
\qquad\text{on }[0,T].
\]
\end{lemma}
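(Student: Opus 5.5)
We obtain Lemma~\ref{lem:temporal-factor} from the boundary identities \eqref{eq:boundary-metric-speed-recovery} and the recovery formula \eqref{eq:recover-b}. The first task is to make sure that the boundary-symbol recovery of Lemma~\ref{lem:boundary-symbol} survives the conjugation by \(P_\varphi\). The map \(P_\varphi\) is pullback by the diffeomorphism \(\varphi^{-1}\times\operatorname{Id}_{(0,T)}\) of \(\Sigma_T\). So \(P_\varphi^{-1}\Lambda_{g_2,c_2}^TP_\varphi\) is, microlocally in the elliptic region, a classical pseudodifferential operator of order one. Its principal symbol is that of \(\Lambda_{g_2,c_2}^T\) transported by the cotangent lift of \(\varphi\times\operatorname{Id}\). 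Because the diffeomorphism acts trivially on \(t\), this lift fixes the temporal frequency \(\omega\) and sends \(\eta\) to \((\mathrm{d}\varphi)^{-T}\eta\). It also preserves the elliptic regions, by \eqref{eq:elliptic-region} and the identity of the symbols.

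Next we compare the two symbols given by \eqref{eq:dn-principal-symbol} on \(\partial M_1\times(0,T)\). Taking \(\omega=0\) gives \(\varphi^*g_{2,\partial M_2}=g_{1,\partial M_1}\). Taking \(\omega\neq0\) with \(\eta\) in the common elliptic region, and solving for \(c\) exactly as in Step~4 of the proof of Lemma~\ref{lem:boundary-symbol}, gives \(c_2(\varphi(y),t)=c_1(y,t)\) for \(t\in(0,T)\). Both sides are continuous on \([0,T]\), so taking limits extends the identity to \(t=0\) and \(t=T\).

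Finally we use the separable structure. Fix \(y_0\in\partial M_1\). Since \(b_j(0)=1\), we have \(c_1(y_0,0)=a_1(y_0)\) and \(c_2(\varphi(y_0),0)=a_2(\varphi(y_0))\). The boundary identity at \(t=0\) then gives \(a_1(y_0)=a_2(\varphi(y_0))>0\). Dividing the identity at time \(t\) by this common positive value and using \eqref{eq:recover-b} for each model, with base points \(y_0\) and \(\varphi(y_0)\), gives
\[
b_1(t)=\frac{c_1(y_0,t)}{c_1(y_0,0)}=\frac{c_2(\varphi(y_0),t)}{c_2(\varphi(y_0),0)}=b_2(t),\qquad 0\le t\le T.
\]

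The only step needing care is the first one, the invariance of the principal symbol under the conjugation. Here we rely on the standard transformation law for pseudodifferential operators under diffeomorphisms. We also need that \(P_\varphi\) has no Jacobian factor, so that no zeroth-order multiplier appears. Even if one did appear, it would change only subprincipal terms and not the principal symbol. The remaining steps are algebraic.
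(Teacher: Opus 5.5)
Your proposal is correct and follows the paper's argument. The paper likewise obtains \(c_2(\varphi(y),t)=c_1(y,t)\) from the transformation law of the principal symbol under the conjugacy, extends it to \(t=0\) and \(t=T\) by smoothness, and applies the ratio formula \eqref{eq:recover-b} at \(y_0\) and \(\varphi(y_0)\); you additionally spell out the cotangent-lift bookkeeping and the use of the common elliptic region for \(\omega\neq0\), which the paper compresses into a single sentence.
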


\begin{proof}
Fix any \(y_0\in\partial M_1\).
By smoothness, the second identity in \eqref{eq:boundary-metric-speed-recovery} extends to \(t=0\) and \(t=T\), so \eqref{eq:recover-b} gives, for \(0\leq t\leq T\),
\begin{equation}
b_1(t)
=
\frac{c_1(y_0,t)}{c_1(y_0,0)}
=
\frac{c_2(\varphi(y_0),t)}{c_2(\varphi(y_0),0)}
=
b_2(t).\qedhere
\label{eq:common-b}
\end{equation}
\end{proof}

Denote the recovered common temporal factor by \(b\).
It determines \(B\), its inverse \(\beta\), \(S_T\), and all temporal coefficients used below.

\subsection{Reparametrization by effective time and scalar gauge}

For \(c(x,t)=a(x)b(t)\) as in \eqref{eq:main-separable-speed}, multiplying the interior equation in \eqref{eq:main-forward-problem} by \(c(x,t)^2\) gives
\[
u_{tt}-a(x)^2b(t)^2\Delta_g u=0.
\]
Thus \(b(t)^2\) is the time-dependent scalar multiplying the spatial principal operator.
The recovered temporal factor defines the propagation clock
\begin{equation}
B(t)
=
\int_0^t b(r)\,\mathrm{d}r,
\qquad
S_T=B(T).
\label{eq:effective-time}
\end{equation}
Since \(b>0\), the map \(B:[0,T]\to[0,S_T]\) is a smooth increasing diffeomorphism.
Let \(\beta=B^{-1}:[0,S_T]\to[0,T]\).
Thus \(\beta(s)\) is the physical time corresponding to the effective time \(s\), and
\[
B(\beta(s))=s,
\qquad
\beta(B(t))=t.
\]

The reparametrization \(s=B(t)\) removes \(b(t)^2\) from the spatial principal part and leaves only a first-order temporal term determined by \(b\).
The following lemma records the transformed equation.
\begin{lemma}
\label{lem:effective-time}
Let \(c\in\mathcal C^T(M)\) have the normalized factors \(a\) and \(b\) from \eqref{eq:main-separable-speed}, let \(u\) solve \eqref{eq:main-forward-problem}, and set
\[
v(x,s)=u(x,\beta(s)).
\]
Then
\begin{equation}
v_{ss}+p(s)v_s-a(x)^2\Delta_g v=0,
\qquad
p(s)
=
\frac{b'(\beta(s))}{b(\beta(s))^2}.
\label{eq:v-equation}
\end{equation}
The zero initial data are preserved.
\end{lemma}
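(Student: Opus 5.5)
The plan is a direct chain-rule computation. Since \(u=v\circ\widehat B\) with \(\widehat B(x,t)=(x,B(t))\), we have \(u(x,t)=v(x,B(t))\). Using \(B'(t)=b(t)\), the first derivative is
\[
u_t(x,t)=b(t)\,v_s(x,B(t)),
\]
and the second is
\[
u_{tt}(x,t)=b(t)^2\,v_{ss}(x,B(t))+b'(t)\,v_s(x,B(t)).
\]
Spatial derivatives commute with this change of variables, so \(\Delta_g u(x,t)=\Delta_g v(x,B(t))\). Smoothness of \(v\) on \(M\times[0,S_T]\) follows from Proposition~\ref{prop:forward-well-posedness-causality} and from \(\beta\) being a smooth diffeomorphism.

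Substitute these into the multiplied form of the interior equation, \(u_{tt}-a^2b^2\Delta_g u=0\). This gives
\[
b(t)^2 v_{ss}+b'(t)v_s-a(x)^2b(t)^2\Delta_g v=0,
\]
with everything evaluated at \(s=B(t)\). Dividing by \(b(t)^2>0\) and writing \(t=\beta(s)\) produces exactly \eqref{eq:v-equation}, with \(p(s)=b'(\beta(s))/b(\beta(s))^2\). This coefficient is smooth on \([0,S_T]\) because \(b\) is smooth and positive.

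For the initial data, \(\beta(0)=0\), so \(v(\cdot,0)=u(\cdot,0)=0\). Likewise
\[
v_s(\cdot,0)=\beta'(0)\,u_t(\cdot,0)=u_t(\cdot,0)/b(0)=0.
\]
The boundary trace transforms in the same way, \(v|_{\partial M}=f(\cdot,\beta(s))\). This trace still vanishes for small \(s\), so it lies in \(\mathcal D_{S_T}\).

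There is no real obstacle. The only point needing care is that the chain rule is applied up to the endpoints \(s=0\) and \(s=S_T\), which is justified by smoothness up to the closed interval.
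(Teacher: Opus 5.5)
Your proposal is correct and follows the paper's proof essentially step for step. Both write \(u(x,t)=v(x,B(t))\), apply the chain rule with \(B'=b\), substitute into \(u_{tt}-a^2b^2\Delta_g u=0\), divide by \(b^2\), and check the initial data using \(\beta(0)=0\) and \(b(0)=1\). Your extra remarks on smoothness up to the endpoints and on the transformed boundary trace are correct but not needed for the lemma.
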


\begin{proof}
Since \(\beta=B^{-1}\), the definition of \(v\) is equivalent to \(u(x,t)=v(x,B(t))\).
Since \(B'(t)=b(t)\), the chain rule gives \(u_t(x,t)=b(t)v_s(x,B(t))\).
Differentiating once more gives
\[
u_{tt}(x,t)
=
b(t)^2v_{ss}(x,B(t))+b'(t)v_s(x,B(t)).
\]
Because \(B\) is independent of \(x\), the spatial derivatives are unchanged:
\[
\Delta_g u(x,t)
=
\Delta_g v(x,B(t)).
\]
Substitution into \(u_{tt}-a(x)^2b(t)^2\Delta_g u=0\), followed by division by \(b(t)^2\) and the identity \(t=\beta(s)\), gives
\[
v_{ss}
+
\frac{b'(\beta(s))}{b(\beta(s))^2}v_s
-
a(x)^2\Delta_gv
=0,
\]
which is \eqref{eq:v-equation}.
Finally, \(B(0)=\beta(0)=0\), and the identities above give
\[
v(x,0)=u(x,0)=0,
\qquad
v_s(x,0)=b(0)^{-1}u_t(x,0)=0.
\]
Thus both zero initial conditions are preserved.
\end{proof}

We remove the remaining first-order term \(p(s)v_s\) by introducing the scalar gauge
\begin{equation}
\vartheta(s)
=
\exp\left(\frac12\int_0^s p(\sigma)\,\mathrm{d}\sigma\right),
\qquad
w(x,s)=\vartheta(s)v(x,s).
\label{eq:vartheta-definition}
\end{equation}
Define
\begin{equation}
q(s)
=
-\frac12p'(s)-\frac14p(s)^2.
\label{eq:q-p}
\end{equation}
On boundary functions, set
\begin{equation}
(U_b^{S_T}f)(y,s)
=
\vartheta(s)f(y,\beta(s)),
\qquad
0<s<S_T.
\label{eq:U-definition}
\end{equation}
The next lemma identifies the transformed solution and its boundary response.

\begin{lemma}
\label{lem:effective-time-rescaling}
Under the assumptions of Lemma~\ref{lem:effective-time}, the function \(w\) satisfies
\begin{equation}
w_{ss}-a(x)^2\Delta_g w+q(s)w=0,
\qquad
w(\cdot,0)=w_s(\cdot,0)=0.
\label{eq:w-equation}
\end{equation}
Moreover, \(U_b^{S_T}:\mathcal D_T\to\mathcal D_{S_T}\) is invertible and
\begin{equation}
\Lambda_{g,a,q}^{S_T}
=
U_b^{S_T}\Lambda_{g,c}^T(U_b^{S_T})^{-1},
\label{eq:potential-response}
\end{equation}
where \(\Lambda_{g,a,q}^{S_T}\) is the Dirichlet-to-Neumann map for \eqref{eq:w-equation} on \((0,S_T)\).
\end{lemma}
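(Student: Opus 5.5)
The proof has two parts: a direct computation for the equation satisfied by \(w\), and then a bookkeeping argument for the boundary operators.

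\emph{The equation for \(w\).} Write \(v=\vartheta^{-1}w\) and substitute into \eqref{eq:v-equation} from Lemma~\ref{lem:effective-time}. By \eqref{eq:vartheta-definition}, \(\vartheta'=\tfrac12p\vartheta\), so \(\vartheta\) is smooth, positive, and satisfies \(\vartheta(0)=1\). Then
\[
v_s=\vartheta^{-1}\bigl(w_s-\tfrac12pw\bigr),
\qquad
v_{ss}=\vartheta^{-1}\bigl(w_{ss}-pw_s+(\tfrac14p^2-\tfrac12p')w\bigr).
\]
In \(v_{ss}+pv_s\) the \(w_s\) terms cancel. The coefficient of \(w\) becomes \(\tfrac14p^2-\tfrac12p'-\tfrac12p^2=q\), with \(q\) as in \eqref{eq:q-p}. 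Multiplying through by \(\vartheta\) gives \eqref{eq:w-equation}. For the initial data, \(w(\cdot,0)=v(\cdot,0)=0\) and \(w_s(\cdot,0)=\tfrac12p(0)v(\cdot,0)+v_s(\cdot,0)=0\). Smoothness of \(p\) and \(q\) on \([0,S_T]\) follows because \(b\) and \(\beta\) are smooth and \(b>0\).

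\emph{Invertibility of \(U_b^{S_T}\).} The inverse is explicit:
\[
\bigl((U_b^{S_T})^{-1}k\bigr)(y,t)=\vartheta(B(t))^{-1}k(y,B(t)).
\]
Both \(U_b^{S_T}\) and this inverse map \(C^\infty(\partial M\times[0,T])\) and \(C^\infty(\partial M\times[0,S_T])\) into each other, since \(B\), \(\beta\) and \(\vartheta^{\pm1}\) are smooth up to the endpoints. The vanishing condition near the initial time is preserved, with \(\varepsilon\) replaced by \(B(\varepsilon)\) or \(\beta(\varepsilon)\), because \(B(0)=0\) and \(B\) is increasing. Hence \(U_b^{S_T}:\mathcal D_T\to\mathcal D_{S_T}\) is a bijection.

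\emph{The conjugation identity \eqref{eq:potential-response}.} Take \(k\in\mathcal D_{S_T}\) and set \(f=(U_b^{S_T})^{-1}k\). Let \(u=u^f\) be the solution from Proposition~\ref{prop:forward-well-posedness-causality}, and form \(w(x,s)=\vartheta(s)u(x,\beta(s))\).
\begin{itemize}
\item By the first part, \(w\) is a smooth solution of \eqref{eq:w-equation} with zero initial data.
\item Its Dirichlet trace is \(\vartheta(s)f(y,\beta(s))=k(y,s)\).
\end{itemize}
To conclude that \(w\) is \emph{the} solution with data \(k\), I need well-posedness of \eqref{eq:w-equation}. This follows from the energy argument of Proposition~\ref{prop:forward-well-posedness-causality} with \(c\) replaced by \(a\) (time-independent) and the bounded zeroth-order term \(qw\) absorbed by Gronwall. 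Alternatively, uniqueness can be read off directly: any solution of \eqref{eq:w-equation} pulls back through the same substitution to a solution of \eqref{eq:main-forward-problem}, where uniqueness is already known.

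The transformation involves only \(s\), so normal derivatives commute with it:
\[
\partial_{\nu_g}w(y,s)=\vartheta(s)\,(\partial_{\nu_g}u)(y,\beta(s))=\bigl(U_b^{S_T}\Lambda_{g,c}^Tf\bigr)(y,s).
\]
Hence \(\Lambda_{g,a,q}^{S_T}k=U_b^{S_T}\Lambda_{g,c}^T(U_b^{S_T})^{-1}k\), which is \eqref{eq:potential-response}.

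The only step requiring care is well-posedness and uniqueness for \eqref{eq:w-equation}, which is needed for \(\Lambda_{g,a,q}^{S_T}\) to be well defined. I expect it to be the main point to check, though the pull-back argument reduces it to Proposition~\ref{prop:forward-well-posedness-causality}.
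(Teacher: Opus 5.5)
Your proposal is correct and follows essentially the same route as the paper: the same substitution computation using \(\vartheta'=\tfrac12 p\vartheta\), the same explicit inverse (note \(\vartheta(B(t))=\sqrt{b(t)}\), so your formula coincides with the paper's \(b(t)^{-1/2}\psi(y,B(t))\)), and the same bookkeeping of Dirichlet and Neumann traces. The only differences are that you start from \(k\in\mathcal D_{S_T}\) instead of composing with \((U_b^{S_T})^{-1}\) at the end, and that you justify uniqueness for \eqref{eq:w-equation} explicitly by pulling back to Proposition~\ref{prop:forward-well-posedness-causality}, whereas the paper uses this uniqueness only implicitly when it reads off \(\Lambda_{g,a,q}^{S_T}(U_b^{S_T}f)\) from the traces of \(w\).
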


\begin{proof}
Differentiating \(B(\beta(s))=s\) gives \(\beta'(s)=b(\beta(s))^{-1}\), and hence
\[
\frac{\mathrm{d}}{\mathrm{d}s}\log b(\beta(s))
=
\frac{b'(\beta(s))}{b(\beta(s))^2}
=
p(s).
\]
Using \(\beta(0)=0\) and \(b(0)=1\), integration over \([0,s]\) gives \(\vartheta(s)=\sqrt{b(\beta(s))}\) and \(\vartheta'/\vartheta=p/2\).

Since \(v=\vartheta^{-1}w\),
\[
v_s
=
\vartheta^{-1}\left(w_s-\frac12pw\right)
\]
and
\[
v_{ss}
=
\vartheta^{-1}
\left(
w_{ss}-pw_s+\left(\frac14p^2-\frac12p'\right)w
\right).
\]
It follows that
\[
v_{ss}+pv_s
=
\vartheta^{-1}
\left(
w_{ss}-\frac12p'w-\frac14p^2w
\right).
\]
Because \(\vartheta\) is independent of \(x\), one also has \(\Delta_gv=\vartheta^{-1}\Delta_gw\).
Substituting these identities into \eqref{eq:v-equation} gives the differential equation in \eqref{eq:w-equation}.
The initial conditions follow from
\[
w(\cdot,0)=\vartheta(0)v(\cdot,0)=0,
\qquad
w_s(\cdot,0)=\vartheta'(0)v(\cdot,0)+\vartheta(0)v_s(\cdot,0)=0.
\]

To express \(q\) in terms of \(b\), write \(t=\beta(s)\).
Since \(\mathrm{d}t/\mathrm{d}s=b(t)^{-1}\), differentiating \(p(s)=b'(t)/b(t)^2\) gives \(p'(s)=b''(t)/b(t)^3-2b'(t)^2/b(t)^4\).
Therefore,
\begin{equation}
q(s)
=
-\frac{b''(t)}{2b(t)^3}
+
\frac{3b'(t)^2}{4b(t)^4},
\qquad
t=\beta(s).
\label{eq:q-b}
\end{equation}
Thus \(q|_{[0,S_T]}\) is determined by \(b|_{[0,T]}\).

Let \(f\in\mathcal D_T\), and let \(u\) be the solution with boundary value \(f\).
The boundary value of \(w\) is
\[
w(y,s)
=
\vartheta(s)u(y,\beta(s))
=
\vartheta(s)f(y,\beta(s))
=
(U_b^{S_T}f)(y,s).
\]
Since \(\vartheta\) is independent of \(x\), its Neumann trace satisfies
\[
\begin{aligned}
\partial_{\nu_g}w(y,s)
&=
\vartheta(s)\partial_{\nu_g}u(y,\beta(s))
\\
&=
\vartheta(s)(\Lambda_{g,c}^Tf)(y,\beta(s))
\\
&=
(U_b^{S_T}\Lambda_{g,c}^Tf)(y,s).
\end{aligned}
\]
Since \(w\) has Dirichlet data \(U_b^{S_T}f\), the preceding identity gives
\[
\Lambda_{g,a,q}^{S_T}(U_b^{S_T}f)
=
U_b^{S_T}(\Lambda_{g,c}^Tf).
\]
Since this holds for every \(f\in\mathcal D_T\),
\begin{equation}
\Lambda_{g,a,q}^{S_T} U_b^{S_T}
=
U_b^{S_T}\Lambda_{g,c}^T.
\label{eq:first-conjugacy}
\end{equation}

Finally, \(\vartheta(B(t))=\sqrt{b(t)}\), so
\begin{equation}
((U_b^{S_T})^{-1}\psi)(y,t)
=
b(t)^{-1/2}\psi(y,B(t)).
\label{eq:U-inverse}
\end{equation}
The smooth maps \(B\) and \(\beta\), together with the positive factor \(\vartheta\), show that \(U_b^{S_T}\) and its inverse map \(\mathcal D_T\) and \(\mathcal D_{S_T}\) onto each other.
Composing \eqref{eq:first-conjugacy} on the right with \((U_b^{S_T})^{-1}\) gives \eqref{eq:potential-response}.

The proof is complete.
\end{proof}

Since \(U_b^{S_T}\) is determined by \(b\), equation~\eqref{eq:potential-response} determines \(\Lambda_{g,a,q}^{S_T}\) from \((\Lambda_{g,c}^T,b)\).

\subsection{Causal transmutation and the exact response correspondence}

Let \(q\in C^\infty([0,S_T])\) be the temporal potential determined by the recovered temporal factor.
We construct an invertible causal operator that intertwines \(\partial_s^2+q(s)\) with \(\partial_s^2\) and preserves both the zero initial data and the boundary source space.
This operator yields the exact finite-time response correspondence.

\begin{definition}
\label{def:volterra-operator}
Given a continuous kernel \(K\) on \(\{(s,r):0\leq r\leq s\leq S_T\}\), define
\[
(\mathcal V_Kz)(s)
=
\int_0^s K(s,r)z(r)\,\mathrm{d}r.
\]
The operator \(\mathcal V_K\) is called a \emph{Volterra integral operator}.
The triangular restriction \(0\leq r\leq s\) means that \((\mathcal V_Kz)(s)\) depends only on \(z|_{[0,s]}\), so the operator is causal in time.
The equation \((I+\mathcal V_K)z=f\) is a Volterra integral equation of the second kind.
\end{definition}

For background on Volterra integral equations and transformation operators, see
\cite{gripenberg1990volterra,levitan1987inverse,marchenko2011sturm,campos2017standard}.

To obtain the intertwining identity between \(\partial_s^2\) and \(\partial_s^2+q(s)\), the kernel \(K\) in Definition~\ref{def:volterra-operator} must satisfy a hyperbolic Goursat problem.
The following lemma shows that this problem has a unique smooth solution.

\begin{lemma}
\label{lem:transmutation-kernel}
Let \(q\in C^\infty([0,S_T])\) be the temporal potential defined in \eqref{eq:q-p}.
Then there exists a unique kernel \(K\) defining the Volterra integral operator \(\mathcal V_K\) in Definition~\ref{def:volterra-operator}.
This kernel satisfies
\[
K\in C^\infty\bigl(\{(s,r):0\leq r\leq s\leq S_T\}\bigr)
\]
and
\begin{align}
&K_{ss}(s,r)-K_{rr}(s,r)+q(s)K(s,r)=0,
\label{eq:kernel-pde}\\
&K(s,0)=0,
\label{eq:kernel-axis}\\
&K(s,s)=-\frac12\int_0^s q(\sigma)\,\mathrm{d}\sigma.
\label{eq:kernel-diagonal}
\end{align}
\end{lemma}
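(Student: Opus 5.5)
Our plan is to treat this as the classical Goursat problem for transformation operators, which we reduce to a Volterra integral equation in characteristic coordinates and solve by successive approximations. We pass to the characteristic variables
\[
\xi=s+r,\qquad \eta=s-r,\qquad 0\le\eta\le\xi\le 2S_T-\eta ,
\]
and set \(H(\xi,\eta)=K\bigl(\tfrac{\xi+\eta}2,\tfrac{\xi-\eta}2\bigr)\). Then \eqref{eq:kernel-pde} becomes
\[
4H_{\xi\eta}+q\!\left(\tfrac{\xi+\eta}2\right)H=0 .
\]
The diagonal \(r=s\) is the line \(\eta=0\), and the axis \(r=0\) is the line \(\xi=\eta\). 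The data \eqref{eq:kernel-diagonal} therefore prescribe
\[
H(\xi,0)=-\tfrac12\int_0^{\xi/2}q ,
\]
and \eqref{eq:kernel-axis} prescribes \(H(\eta,\eta)=0\). Note that the axis \(r=0\) is not characteristic. The problem is therefore a mixed Goursat–Cauchy problem: one characteristic line plus a noncharacteristic line, as in the standard Gelfand–Levitan/Marchenko construction of \(\sin\)-type transformation operators.

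To obtain a pure Goursat problem, we extend \(K\) oddly in \(r\). We consider the triangle \(\{|r|\le s\le S_T\}\), impose \(K(s,s)=-\tfrac12\int_0^s q\) and \(K(s,-s)=+\tfrac12\int_0^s q\) on the two characteristic edges, and solve there. The equation is invariant under \(r\mapsto -r\), and the data are odd. By uniqueness the solution is then odd, which gives \(K(s,0)=0\).

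On the full triangle, in coordinates \((\xi,\eta)\in[0,2S_T]^2\) with \(\xi+\eta\le 2S_T\), the edges are \(\eta=0\) and \(\xi=0\). Integrating the equation in both characteristic variables gives
\[
H(\xi,\eta)=H(\xi,0)+H(0,\eta)-H(0,0)-\tfrac14\int_0^\xi\!\!\int_0^\eta q\!\left(\tfrac{\xi'+\eta'}2\right)H(\xi',\eta')\,\mathrm d\eta'\,\mathrm d\xi' .
\]
The compatibility \(H(0,0)=0\) holds at the vertex \(s=0\). We solve this equation by Picard iteration. The standard induction bound \(|H_k|\le C\,\|q\|_\infty^k(\xi\eta)^k/(4^k k!^2)\) gives uniform convergence, so the continuous solution exists and is unique. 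Uniqueness of \(K\) itself follows because any continuous (distributional) solution of \eqref{eq:kernel-pde}–\eqref{eq:kernel-diagonal} on the half triangle, extended oddly, satisfies the same integral equation; the oddness argument is reversible. Note that in the variable \(s\) the potential \(q(\tfrac{\xi+\eta}2)\) involves only values in \([0,S_T]\), so the smoothness of \(q\) on \([0,S_T]\) suffices.

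We expect smoothness up to the corners and edges to be the only real obstacle. The integral equation differentiates cleanly: \(H_\xi\) and \(H_\eta\) satisfy Volterra equations of the same type, whose forcing involves derivatives of the smooth boundary data and of \(q\). Inductively, every \(\partial_\xi^j\partial_\eta^k H\) is continuous up to the boundary of the closed triangle. This gives \(K\in C^\infty\) on the closed half triangle \(\{0\le r\le s\le S_T\}\), including at the vertex. It remains to check directly that the solution satisfies \eqref{eq:kernel-pde}, which follows by differentiating the integral identity, together with the boundary identities \eqref{eq:kernel-axis} and \eqref{eq:kernel-diagonal}, which follow from the oddness and the edge data.
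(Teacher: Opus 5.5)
Your proposal is correct. It shares the paper's skeleton: characteristic coordinates, an integral equation solved by a fixed-point argument, and a bootstrap for smoothness. It differs in how it treats the noncharacteristic condition $K(s,0)=0$.

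The paper never leaves the half triangle. It integrates $H_{\xi\eta}$ over the rectangle $[\eta,\xi]\times[0,\eta]$, whose corner $(\eta,\eta)$ lies on the axis. The axis condition is therefore built directly into the equation $H(\xi,\eta)=-\frac12\int_{\eta/2}^{\xi/2}q(\sigma)\,\mathrm{d}\sigma-\frac14\int_\eta^\xi\int_0^\eta q\,H$. Existence and uniqueness among smooth kernels then follow with no reflection or symmetry argument.

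Your odd reflection instead gives the textbook characteristic Goursat problem on the full triangle, with rectangles anchored at the vertex. That is the natural setting for Picard iteration with the explicit bound $(\xi\eta)^k/(4^k(k!)^2)$, which is equivalent to the paper's exponentially weighted contraction. The price is two extra steps:
\begin{itemize}
\item Oddness of the solution. This is fine, since the integral equation is equivariant under $H(\xi,\eta)\mapsto-H(\eta,\xi)$ and uniqueness then forces antisymmetry.
\item Uniqueness on the half triangle. Your phrase ``the oddness argument is reversible'' hides the point that the odd extension of an arbitrary solution on the half triangle must again solve the equation across $r=0$. For smooth $K$ this holds: $K(s,0)=0$ and the equation give $K_{rr}(s,0)=K_{ss}(s,0)+q(s)K(s,0)=0$, so the odd extension is $C^2$.
\end{itemize}
With that line added, your argument is complete.
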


\begin{proof}
Write
\[
\Delta_{S_T}=\{(s,r):0\leq r\leq s\leq S_T\}.
\]
We first transform the differential problem on \(\Delta_{S_T}\) into an equivalent triangular integral equation and then solve that equation by a contraction argument.

\medskip
\noindent\emph{Step 1: transformation to characteristic coordinates.}
Introduce the characteristic variables
\[
\xi=s+r,
\qquad
\eta=s-r,
\qquad
H(\xi,\eta)
=
K\left(\frac{\xi+\eta}{2},\frac{\xi-\eta}{2}\right).
\]
The inverse relations are
\[
s=\frac{\xi+\eta}{2},
\qquad
r=\frac{\xi-\eta}{2}.
\]
Thus the change of variables maps \(\Delta_{S_T}\) bijectively onto
\[
D_{S_T}
=
\{(\xi,\eta):0\leq\eta\leq\xi,\ \xi+\eta\leq2S_T\}.
\]
The sides \(r=s\) and \(r=0\) correspond respectively to \(\eta=0\) and \(\xi=\eta\).
Since \(K(s,r)=H(s+r,s-r)\), the chain rule gives
\begin{align*}
K_s&=H_\xi+H_\eta,
&
K_r&=H_\xi-H_\eta,\\
K_{ss}&=H_{\xi\xi}+2H_{\xi\eta}+H_{\eta\eta},
&
K_{rr}&=H_{\xi\xi}-2H_{\xi\eta}+H_{\eta\eta},
\end{align*}
where all derivatives of \(H\) are evaluated at \((\xi,\eta)=(s+r,s-r)\).
Subtracting the last two identities gives \(K_{ss}-K_{rr}=4H_{\xi\eta}\).
Consequently, equations \eqref{eq:kernel-pde}--\eqref{eq:kernel-diagonal} are equivalent to
\begin{align}
4H_{\xi\eta}(\xi,\eta)
+
q\left(\frac{\xi+\eta}{2}\right)H(\xi,\eta)&=0,
\label{eq:kernel-characteristic-pde}\\
H(\xi,0)=-\frac12\int_0^{\xi/2}q(\sigma)\,\mathrm{d}\sigma,
\qquad
H(\xi,\xi)&=0.
\notag
\end{align}

\medskip
\noindent\emph{Step 2: the equivalent triangular integral equation.}
Fix \((\xi,\eta)\in D_{S_T}\).
The rectangle
\[
R_{\xi,\eta}=[\eta,\xi]\times[0,\eta]
\]
is contained in \(D_{S_T}\).
Indeed, if \((\xi',\eta')\in R_{\xi,\eta}\), then \(0\leq\eta'\leq\eta\leq\xi'\) and \(\xi'+\eta'\leq\xi+\eta\leq2S_T\).
Integrating \eqref{eq:kernel-characteristic-pde} over \(R_{\xi,\eta}\) gives
\[
\int_\eta^\xi\int_0^\eta
H_{\xi\eta}(\xi',\eta')\,\mathrm{d}\eta'\,\mathrm{d}\xi'
=
-\frac14
\int_\eta^\xi\int_0^\eta
q\left(\frac{\xi'+\eta'}{2}\right)
H(\xi',\eta')\,\mathrm{d}\eta'\,\mathrm{d}\xi'.
\]
For the left hand side, first integrate with respect to \(\eta'\), and then with respect to \(\xi'\):
\[
\begin{aligned}
\int_\eta^\xi\int_0^\eta
H_{\xi\eta}(\xi',\eta')\,\mathrm{d}\eta'\,\mathrm{d}\xi'
&=
\int_\eta^\xi
\bigl[H_\xi(\xi',\eta)-H_\xi(\xi',0)\bigr] \,\mathrm{d}\xi'\\
&=
\bigl[H(\xi,\eta)-H(\eta,\eta)\bigr]
-
\bigl[H(\xi,0)-H(\eta,0)\bigr].
\end{aligned}
\]
Combining the last two identities yields
\[
\begin{aligned}
&H(\xi,\eta)-H(\eta,\eta)-H(\xi,0)+H(\eta,0)\\
&\qquad
=
-\frac14
\int_\eta^\xi\int_0^\eta
q\left(\frac{\xi'+\eta'}{2}\right)
H(\xi',\eta')
\,\mathrm{d}\eta'\,\mathrm{d}\xi'.
\end{aligned}
\]
Substituting the two boundary values and solving for \(H(\xi,\eta)\) yields the triangular integral equation
\begin{equation}
H(\xi,\eta)
=
-\frac12\int_{\eta/2}^{\xi/2}q(\sigma)\,\mathrm{d}\sigma
-
\frac14
\int_\eta^\xi\int_0^\eta
q\left(\frac{\xi'+\eta'}{2}\right)
H(\xi',\eta')
\,\mathrm{d}\eta'\,\mathrm{d}\xi'.
\label{eq:H-volterra}
\end{equation}
For smooth \(H\), this integral equation is equivalent to the transformed differential problem.
Indeed, setting \(\eta=0\) in \eqref{eq:H-volterra} gives the prescribed value of \(H(\xi,0)\), while setting \(\xi=\eta\) gives \(H(\xi,\xi)=0\).
Applying \(\partial_\eta\partial_\xi\) to \eqref{eq:H-volterra} recovers the transformed differential equation.

\medskip
\noindent\emph{Step 3: existence and uniqueness of a continuous solution.}
Let \(\mathfrak F:C(D_{S_T})\to C(D_{S_T})\) denote the map defined by the right hand side of \eqref{eq:H-volterra}.
For \(\lambda>0\), equip \(C(D_{S_T})\) with the weighted norm
\[
\|H\|_\lambda
=
\sup_{(\xi,\eta)\in D_{S_T}}
e^{-\lambda(\xi+\eta)}|H(\xi,\eta)|.
\]
Since \(0\leq\xi+\eta\leq2S_T\) on \(D_{S_T}\),
\[
e^{-2\lambda S_T}\|H\|_\infty
\leq
\|H\|_\lambda
\leq
\|H\|_\infty.
\]
Thus this weighted norm is equivalent to the uniform norm, and \(C(D_{S_T})\) is complete in \(\|\cdot\|_\lambda\).
Put \(C_q=\|q\|_{L^\infty(0,S_T)}\).
For \(H_1,H_2\in C(D_{S_T})\), the fixed first term in \eqref{eq:H-volterra} cancels, and
\begin{align*}
&e^{-\lambda(\xi+\eta)}
|(\mathfrak F H_1)(\xi,\eta)-(\mathfrak F H_2)(\xi,\eta)|
\\
&\leq
\frac{C_q}{4}\|H_1-H_2\|_\lambda
\int_\eta^\xi e^{-\lambda(\xi-\xi')}\,\mathrm{d}\xi'
\int_0^\eta e^{-\lambda(\eta-\eta')}\,\mathrm{d}\eta'\\
&\leq
\frac{C_q}{4\lambda^2}\|H_1-H_2\|_\lambda.
\end{align*}
Taking the supremum over \(D_{S_T}\) gives
\[
\|\mathfrak F H_1-\mathfrak F H_2\|_\lambda
\leq
\frac{C_q}{4\lambda^2}\|H_1-H_2\|_\lambda.
\]
Choose \(\lambda>\sqrt{C_q}/2\).
Then \(C_q/(4\lambda^2)<1\), so \(\mathfrak F\) is a contraction.
The Banach fixed point theorem therefore gives a unique \(H\in C(D_{S_T})\) satisfying \eqref{eq:H-volterra}.

\medskip
\noindent\emph{Step 4: smoothness and return to the original variables.}
Let \(H\) be this fixed point.
Since \(q\) and \(H\) are continuous, differentiating the right hand side of \eqref{eq:H-volterra} with respect to its integration limits shows that \(H\in C^1(D_{S_T})\).
If \(H\in C^m(D_{S_T})\), then repeated use of the Leibniz rule and \(q\in C^\infty([0,S_T])\) shows that the right hand side of \eqref{eq:H-volterra} belongs to \(C^{m+1}(D_{S_T})\).
Induction gives \(H\in C^\infty(D_{S_T})\).
By Step 2, this smooth fixed point solves the transformed differential problem.

Define
\[
K(s,r)=H(s+r,s-r)
\]
on \(\Delta_{S_T}\).
Step 1 shows that \(K\) is smooth up to the boundary and satisfies \eqref{eq:kernel-pde}--\eqref{eq:kernel-diagonal}.
If another smooth solution existed, the same coordinate change would produce a second continuous fixed point of \(\mathfrak F\), contradicting the uniqueness in Step 3.
Thus \(K\) is the unique smooth solution.

The proof is complete.
\end{proof}

Let \(K\) be the unique kernel supplied by Lemma~\ref{lem:transmutation-kernel}, and define the causal transmutation operator \(T_q^{S_T}=I+\mathcal V_K\) by
\begin{equation}
(T_q^{S_T}z)(s)
=
z(s)+\int_0^sK(s,r)z(r)\,\mathrm{d}r.
\label{eq:Tq-definition}
\end{equation}

\begin{lemma}
\label{lem:intertwining}
If \(z\in C^2([0,S_T])\) satisfies \(z(0)=z'(0)=0\), then
\begin{equation}
(\partial_s^2+q(s))T_q^{S_T}z
=
T_q^{S_T}(\partial_s^2z).
\label{eq:one-dimensional-intertwining}
\end{equation}
The operator \(T_q^{S_T}\) is invertible on \(C^m([0,S_T])\) for every integer \(m\geq0\).
For every \(m\geq1\), it restricts to an automorphism of
\[
\left\{
z\in C^m([0,S_T])
:
z(0)=z'(0)=0
\right\}.
\]
Its inverse again has the same causal triangular integral form.
Moreover, \(T_q^{S_T}\) and its inverse preserve vanishing on an initial time interval.
When applied pointwise in the boundary variable, they therefore define mutually inverse linear maps
\[
T_q^{S_T},
\ (T_q^{S_T})^{-1}
:
\mathcal D_{S_T}
\longrightarrow
\mathcal D_{S_T}.
\]
\end{lemma}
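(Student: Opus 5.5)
The intertwining identity \eqref{eq:one-dimensional-intertwining} will be checked by direct differentiation, using the kernel properties \eqref{eq:kernel-pde}--\eqref{eq:kernel-diagonal} from Lemma~\ref{lem:transmutation-kernel}. Set \(y=T_q^{S_T}z\). Differentiating under the integral gives
\[
y'(s)=z'(s)+K(s,s)z(s)+\int_0^sK_s(s,r)z(r)\,\mathrm{d}r,
\]
\[
y''(s)=z''+\tfrac{\mathrm d}{\mathrm ds}\bigl(K(s,s)\bigr)z+K(s,s)z'+K_s(s,s)z+\int_0^sK_{ss}(s,r)z(r)\,\mathrm{d}r .
\]
On the other side, integrate \(\int_0^sK(s,r)z''(r)\,\mathrm{d}r\) by parts twice. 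The boundary terms at \(r=0\) vanish because \(z(0)=z'(0)=0\) and \(K(s,0)=0\); the remaining term \(K_r(s,0)z(0)\) is also zero. This leaves
\[
\int_0^sKz''\,\mathrm dr=K(s,s)z'(s)-K_r(s,s)z(s)+\int_0^sK_{rr}z\,\mathrm dr.
\]
Replace \(K_{ss}\) by \(K_{rr}-qK\) using \eqref{eq:kernel-pde}. After this substitution the integrals and the \(z'\) terms cancel, and the \(z\)-coefficients reduce to
\[
\tfrac{\mathrm d}{\mathrm ds}K(s,s)+K_s(s,s)+K_r(s,s)+q(s)=2\,\tfrac{\mathrm d}{\mathrm ds}K(s,s)+q(s).
\]
This vanishes by \eqref{eq:kernel-diagonal}. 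The smoothness of \(K\) up to the diagonal, supplied by Lemma~\ref{lem:transmutation-kernel}, justifies all the differentiations.

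For invertibility, I solve the second-kind Volterra equation \((I+\mathcal V_K)z=f\) on \(C([0,S_T])\). Either a Neumann series or the weighted-norm contraction used in Step~3 of Lemma~\ref{lem:transmutation-kernel} works: with the norm \(\sup_s e^{-\lambda s}|z(s)|\), the operator \(\mathcal V_K\) has norm at most \(\|K\|_\infty/\lambda\). The inverse has the form \(I+\mathcal V_L\) with resolvent kernel \(L\) given by
\[
L=-K-\int_r^s K(s,\sigma)L(\sigma,r)\,\mathrm d\sigma .
\]
This again has a unique continuous solution on the triangle, and \(L\) is smooth by the same bootstrap as in Step~4. Causality is immediate from the triangular form: if \(z=0\) on \([0,\varepsilon)\), then \((I+\mathcal V_K)^{\pm1}z=0\) there.

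Preservation of \(C^m\) follows from the derivative formulas above. Each derivative of \(\int_0^sK(s,r)z(r)\,\mathrm{d}r\) is a sum of terms \((\partial^jK)(s,s)\,z^{(i)}(s)\) plus an integral with a smooth kernel, so \(\mathcal V_K\) and \(\mathcal V_L\) map \(C^m\) to \(C^m\), indeed to \(C^{m+1}\). For the subspace with \(z(0)=z'(0)=0\), note that \((\mathcal V_Kz)(0)=0\) and \((\mathcal V_Kz)'(0)=K(0,0)z(0)=0\); the same holds for \(\mathcal V_L\), so both directions preserve the subspace. Since the two maps are mutually inverse, each is an automorphism.

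Finally, apply these maps pointwise in \(y\in\partial M\). The kernels are smooth and independent of \(y\), so differentiation in \(y\) commutes with the integral, and joint smoothness on \(\partial M\times[0,S_T]\) follows. Vanishing near \(s=0\) is preserved by causality. Hence \(T_q^{S_T}\) and its inverse map \(\mathcal D_{S_T}\) into itself and are mutually inverse there. The step requiring the most care is the bookkeeping of boundary terms in the intertwining computation. In particular, one must note that the diagonal contributions combine as \(K_s(s,s)+K_r(s,s)=\tfrac{\mathrm d}{\mathrm ds}K(s,s)\), so that the total coefficient matches \eqref{eq:kernel-diagonal} exactly.
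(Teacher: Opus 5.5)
Your proof is correct, and the intertwining part is exactly the paper's computation: the Leibniz rule, two integrations by parts, then the kernel equation and the diagonal condition.

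Where you diverge is in how you handle the inverse. The paper needs only a continuous resolvent kernel \(L=\sum_{m\geq1}(-1)^mK_m\), obtained from the Neumann series of iterated kernels. It gets \(C^m\)-invertibility by bootstrapping the solution itself through \(z=f-\mathcal V_Kz\), which uses only smoothness of \(K\), together with the bounded inverse theorem. It then shows by Gronwall that the inverse preserves vanishing on \([0,\tau]\). You instead make \(L\) smooth and read everything off the triangular form \(I+\mathcal V_L\). This gives you the \(C^m\) mapping property, causality of the inverse without Gronwall, and joint smoothness in \((y,s)\) on \(\mathcal D_{S_T}\), which your route makes more transparent than the paper's one-line conclusion.

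The price is the smoothness of \(L\), and ``the same bootstrap as in Step~4'' does not carry over verbatim. In \(L(s,r)=-K(s,r)-\int_r^sK(s,\sigma)L(\sigma,r)\,\mathrm d\sigma\) the variable \(r\) also enters the unknown inside the integral, so differentiating in \(r\) gains no regularity. The repair is to use this identity only for \(\partial_sL\). For \(\partial_rL\), use the companion identity \(L(s,r)=-K(s,r)-\int_r^sL(s,\sigma)K(\sigma,r)\,\mathrm d\sigma\), which follows from \((I+\mathcal V_L)(I+\mathcal V_K)=I\). Then \(L\in C^m\) implies \(\partial_sL,\partial_rL\in C^m\), and induction gives \(L\in C^\infty\). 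Alternatively, you can avoid needing smoothness of \(L\) by adopting the paper's bootstrap on \(z\).
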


\begin{proof}
Write \(\mathcal V=\mathcal V_K\) and \(\mathcal T=T_q^{S_T}=I+\mathcal V\).
We prove the assertions in four steps.

\medskip
\noindent\emph{Step 1: the intertwining identity.}
Let \(z\in C^2([0,S_T])\) satisfy \(z(0)=z'(0)=0\).
Applying Leibniz' rule to \((\mathcal Vz)(s)=\int_0^sK(s,r)z(r)\,\mathrm{d}r\) gives
\begin{align}
(\mathcal Vz)'(s)
&=
K(s,s)z(s)
+
\int_0^sK_s(s,r)z(r)\,\mathrm{d}r,
\label{eq:Vz-first}\\
(\mathcal Vz)''(s)
&=
(2K_s+K_r)(s,s)z(s)
+
K(s,s)z'(s)
+
\int_0^sK_{ss}(s,r)z(r)\,\mathrm{d}r.
\label{eq:Vz-second}
\end{align}
For comparison with \(\mathcal V(z'')\), integrate twice by parts in \(r\).
The conditions \(z(0)=z'(0)=0\) eliminate the terms at \(r=0\), and hence
\begin{equation}
\int_0^sK(s,r)z''(r)\,\mathrm{d}r
=
K(s,s)z'(s)
-
K_r(s,s)z(s)
+
\int_0^sK_{rr}(s,r)z(r)\,\mathrm{d}r.
\label{eq:twice-by-parts}
\end{equation}
Since \(\mathcal T=I+\mathcal V\), equations \eqref{eq:Vz-second} and \eqref{eq:twice-by-parts} give
\begin{align*}
(\partial_s^2+q)\mathcal Tz-\mathcal Tz''
={}&
\bigl[2(K_s+K_r)(s,s)+q(s)\bigr]z(s)\\
&+
\int_0^s
\bigl(K_{ss}-K_{rr}+q(s)K\bigr)(s,r)z(r)\,\mathrm{d}r.
\end{align*}
The kernel identities supplied by Lemma~\ref{lem:transmutation-kernel} now give both cancellations.
Indeed, the integral term vanishes by \eqref{eq:kernel-pde}, while differentiating \eqref{eq:kernel-diagonal} along the diagonal gives
\[
(K_s+K_r)(s,s)
=
-\frac12q(s),
\]
and therefore
\[
2(K_s+K_r)(s,s)+q(s)=0.
\]
Thus the pointwise term also vanishes, and
\[
(\partial_s^2+q)\mathcal Tz=\mathcal Tz''.
\]
This proves \eqref{eq:one-dimensional-intertwining}.

\medskip
\noindent\emph{Step 2: invertibility on \(C([0,S_T])\) and the form of the inverse.}
Let \(C_K=\|K\|_{L^\infty}\).
For \(m\geq1\), the \(m\)-fold iterate of \(\mathcal V\) is integrated over the simplex
\[
0\leq r_m\leq r_{m-1}\leq\cdots\leq r_1\leq s.
\]
Since this simplex has volume \(s^m/m!\), bounding the \(m\) kernel factors by \(C_K\) gives
\[
\|\mathcal V^m\|_{C\to C}
\leq
\frac{(C_KS_T)^m}{m!}.
\]
Here \(\|\cdot\|_{C\to C}\) denotes the operator norm on \(C([0,S_T])\).
Hence \(\sum_{m=0}^\infty(-\mathcal V)^m\) converges in operator norm.
Let
\[
P_N=\sum_{m=0}^N(-\mathcal V)^m.
\]
Then the finite identity for a geometric series gives
\[
(I+\mathcal V)P_N
=
P_N(I+\mathcal V)
=
I+(-1)^N\mathcal V^{N+1}.
\]
Since \(\|\mathcal V^{N+1}\|_{C\to C}\to0\), passing to the limit yields
\[
\mathcal T^{-1}
=
\sum_{m=0}^\infty(-\mathcal V)^m.
\]

To identify the form of the inverse, define \(K_1=K\) and
\[
K_{m+1}(s,r)
=
\int_r^sK(s,\rho)K_m(\rho,r)\,\mathrm{d}\rho.
\]
Fubini's theorem gives \(\mathcal V^m=\mathcal V_{K_m}\), and induction gives
\[
|K_m(s,r)|
\leq
C_K^m\frac{(s-r)^{m-1}}{(m-1)!}.
\]
Thus
\[
L(s,r)
=
\sum_{m=1}^\infty(-1)^mK_m(s,r)
\]
converges uniformly on \(0\leq r\leq s\leq S_T\), and
\[
\mathcal T^{-1}=I+\mathcal V_L.
\]
Hence the inverse has the same causal triangular integral form as \(\mathcal T\).

\medskip
\noindent\emph{Step 3: invertibility on \(C^m([0,S_T])\) and preservation of the initial data.}
Since \(K\) is smooth, \(\mathcal T\) maps \(C^m([0,S_T])\) continuously into itself for every \(m\geq0\).
Let \(f\in C^m([0,S_T])\), and let \(z=\mathcal T^{-1}f\in C([0,S_T])\), whose existence follows from Step 2.
The equation \(\mathcal Tz=f\) is
\[
z(s)+\int_0^sK(s,r)z(r)\,\mathrm{d}r=f(s).
\]
If \(m\geq1\), differentiating once gives
\[
z'(s)
=
f'(s)-K(s,s)z(s)-\int_0^sK_s(s,r)z(r)\,\mathrm{d}r.
\]
Thus \(f\in C^1\) implies \(z\in C^1\).
Repeated differentiation of this identity shows inductively that \(f\in C^m\) implies \(z\in C^m\).
Therefore \(\mathcal T:C^m([0,S_T])\to C^m([0,S_T])\) is bijective; since it is bounded, the bounded inverse theorem shows that \(\mathcal T^{-1}\) is bounded on \(C^m([0,S_T])\).

It remains to check the initial conditions.
By \eqref{eq:Tq-definition}, \eqref{eq:Vz-first}, and \(K(0,0)=0\) from \eqref{eq:kernel-axis},
\[
(\mathcal Tz)(0)=z(0),
\qquad
(\mathcal Tz)'(0)=z'(0).
\]
Consequently, if \(f=\mathcal Tz\), then \(f(0)=f'(0)=0\) if and only if \(z(0)=z'(0)=0\).
This proves the asserted automorphism property for every \(m\geq1\).

\medskip
\noindent\emph{Step 4: preservation of vanishing on an initial interval.}
Let \(0<\tau\leq S_T\).
If \(z=0\) on \([0,\tau]\), then the triangular formula for \(\mathcal Tz\) immediately gives \(\mathcal Tz=0\) on the same interval.
Conversely, suppose that \(f=\mathcal Tz=0\) on \([0,\tau]\).
For \(0\leq s\leq\tau\),
\[
z(s)
=
-\int_0^sK(s,r)z(r)\,\mathrm{d}r,
\qquad
|z(s)|
\leq
C_K\int_0^s|z(r)|\,\mathrm{d}r.
\]
Gronwall's inequality gives \(z=0\) on \([0,\tau]\).
Thus both \(\mathcal T\) and \(\mathcal T^{-1}\) preserve vanishing on every initial time interval.

Finally, apply these operators pointwise in the boundary variable.
Step 3 preserves smoothness, while Step 4 preserves the defining condition that functions in \(\mathcal D_{S_T}\) vanish on an initial time interval.
Therefore \(T_q^{S_T}\) and \((T_q^{S_T})^{-1}\) define mutually inverse linear maps from \(\mathcal D_{S_T}\) onto itself.

The proof is complete.
\end{proof}

We use the same notation \(T_q^{S_T}\) for its pointwise action on functions of \((x,s)\).
Because its kernel depends only on \((s,r)\), it commutes with every spatial differential operator \(A_x\) whose coefficients are independent of \(s\).
Combining this fact with Lemma~\ref{lem:intertwining} gives
\begin{equation}
(\partial_s^2-a(x)^2\Delta_g+q(s))T_q^{S_T}z
=
T_q^{S_T}(\partial_s^2-a(x)^2\Delta_g)z.
\label{eq:full-intertwining}
\end{equation}

We now define the stationary response appearing in the exact correspondence.
For a positive \(a\in C^\infty(M)\) and \(f\in\mathcal D_{S_T}\), let \(z^f\) solve
\begin{equation}
\begin{cases}
z_{ss}^f-a(x)^2\Delta_g z^f=0 & \text{in }M\times(0,S_T),\\
z^f|_{s=0}=z_s^f|_{s=0}=0 & \text{in }M,\\
z^f=f & \text{on }\partial M\times(0,S_T).
\end{cases}
\label{eq:stationary-forward}
\end{equation}
Its Dirichlet-to-Neumann map is
\begin{equation}
R_{g,a}^{S_T}f
=
\partial_{\nu_g}z^f|_{\partial M\times(0,S_T)}.
\label{eq:stationary-response}
\end{equation}
Proposition~\ref{prop:forward-well-posedness-causality}, applied in the time variable \(s\) with the spatial factor \(a(x)\), gives
\[
R_{g,a}^{S_T}
:
\mathcal D_{S_T}
\longrightarrow
\mathcal D_{S_T}.
\]
Lemma~\ref{lem:effective-time-rescaling} and Lemma~\ref{lem:intertwining} place \(\Lambda_{g,a,q}^{S_T}\), \(T_q^{S_T}\), and \((T_q^{S_T})^{-1}\) on the same boundary source space.

\begin{proposition}
\label{prop:boundary-data-correspondence}
Let \(c\in\mathcal C^T(M)\), and let \(a\) and \(b\) denote its normalized factors from \eqref{eq:main-separable-speed}.
Then \(\Lambda_{g,c}^T\) and the triple
\[
\bigl(g_{\partial M},b,R_{g,a}^{S_T}\bigr)
\]
determine one another explicitly through the invertible causal operators constructed above; in particular, the passage to the stationary response loses no boundary information.
If two normalized data sets satisfy \eqref{eq:boundary-identification}, with \(P_\varphi\) defined by \eqref{eq:boundary-pushforward}, then
\begin{equation}
\varphi^*g_{2,\partial M_2}=g_{1,\partial M_1},
\qquad
b_1=b_2=:b,
\qquad
R_{g_1,a_1}^{S_T}
=
P_\varphi^{-1}R_{g_2,a_2}^{S_T}P_\varphi,
\quad
S_T=\int_0^Tb(t)\,\mathrm{d}t.
\label{eq:exact-correspondence-comparison}
\end{equation}
\end{proposition}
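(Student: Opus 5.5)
The plan is to prove the correspondence by composing the two causal conjugations already built: the clock-and-gauge operator \(U_b^{S_T}\) of Lemma~\ref{lem:effective-time-rescaling} and the transmutation \(T_q^{S_T}\) of Lemma~\ref{lem:intertwining}. The key identity to establish is
\[
\Lambda_{g,a,q}^{S_T}\,T_q^{S_T}=T_q^{S_T}\,R_{g,a}^{S_T}
\quad\text{on }\mathcal D_{S_T}.
\]
Combined with \eqref{eq:potential-response}, this gives
\[
\Lambda_{g,c}^T=(U_b^{S_T})^{-1}T_q^{S_T}R_{g,a}^{S_T}(T_q^{S_T})^{-1}U_b^{S_T},
\qquad
R_{g,a}^{S_T}=(T_q^{S_T})^{-1}U_b^{S_T}\Lambda_{g,c}^T(U_b^{S_T})^{-1}T_q^{S_T}.
\]
All operators here depend only on \(b\): \(q\) comes from \eqref{eq:q-b}, \(K\) from Lemma~\ref{lem:transmutation-kernel}, and \(S_T=B(T)\) from \(b\).

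To prove the key identity, fix \(f\in\mathcal D_{S_T}\) and let \(z=z^f\) solve \eqref{eq:stationary-forward}. Proposition~\ref{prop:forward-well-posedness-causality} gives \(z\in C^\infty(M\times[0,S_T])\). Set \(w=T_q^{S_T}z\), applied pointwise in \(x\).
\begin{itemize}
\item Since \(K\) is smooth up to the boundary of the triangle, \(w\) is smooth, and derivatives in \(x\) pass under the integral.
\item Because \(z(x,0)=z_s(x,0)=0\), the intertwining \eqref{eq:full-intertwining} gives \(w_{ss}-a^2\Delta_gw+qw=T_q^{S_T}(z_{ss}-a^2\Delta_gz)=0\).
\item Step 3 of Lemma~\ref{lem:intertwining} gives \(w(\cdot,0)=w_s(\cdot,0)=0\).
\item Restricting to \(\partial M\), and using that \(T_q^{S_T}\) acts only in \(s\), gives \(w|_{\Sigma}=T_q^{S_T}f\in\mathcal D_{S_T}\). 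It also gives \(\partial_{\nu_g}w=T_q^{S_T}\partial_{\nu_g}z=T_q^{S_T}R_{g,a}^{S_T}f\).
\end{itemize}
The well-posedness argument of Proposition~\ref{prop:forward-well-posedness-causality} also applies to \eqref{eq:w-equation}, since the zeroth-order term \(qw\) is harmless in the energy estimate. Uniqueness for \eqref{eq:w-equation} then shows that \(w\) is the solution with Dirichlet data \(T_q^{S_T}f\), which yields the identity. Since \(T_q^{S_T}\) is a bijection of \(\mathcal D_{S_T}\), the identity can be inverted. This proves the mutual determination: \(g_{\partial M}\) and \(b\) come from Lemma~\ref{lem:boundary-symbol} and \eqref{eq:recover-b}, \(R_{g,a}^{S_T}\) comes from the second formula, and conversely \(\Lambda_{g,c}^T\) comes from the first. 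The boundary metric enters only as part of the triple, for the later invariant reformulation.

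For the comparison \eqref{eq:exact-correspondence-comparison}:
\begin{itemize}
\item The first relation is \eqref{eq:boundary-metric-speed-recovery}.
\item Lemma~\ref{lem:temporal-factor} gives \(b_1=b_2\), so the two models share \(B\), \(S_T\), \(q\), \(K\), \(U_b^{S_T}\) and \(T_q^{S_T}\).
\item \(P_\varphi\) acts only in the boundary variable, while \(U_b^{S_T}\) and \(T_q^{S_T}\) act only in time (multiplication by \(\vartheta(s)\), composition with \(\beta\), and a Volterra integral in \(s\)). Hence \(P_\varphi\) commutes with both.
\end{itemize}
Substituting \eqref{eq:boundary-identification} into the second formula and moving \(P_\varphi^{\pm1}\) through the temporal operators gives \(R_{g_1,a_1}^{S_T}=P_\varphi^{-1}R_{g_2,a_2}^{S_T}P_\varphi\).

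The main obstacle is making sure \(T_q^{S_T}\) applied to a genuine solution gives a genuine solution: smoothness up to \(s=0\), compatibility of the boundary data with the zero initial data, and uniqueness for the equation with the potential \(q\). I would handle this by noting that \(\mathcal D_{S_T}\) is preserved (Step 4 of Lemma~\ref{lem:intertwining}), so \(w\) vanishes near \(s=0\). The energy/Galerkin argument of Proposition~\ref{prop:forward-well-posedness-causality} then goes through verbatim with the bounded extra term \(qw\).
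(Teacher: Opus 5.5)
Your proposal is correct and follows the paper's proof essentially step by step. The paper likewise sets \(w=T_q^{S_T}z^f\), uses \eqref{eq:full-intertwining} and uniqueness to obtain \(\Lambda_{g,a,q}^{S_T}T_q^{S_T}=T_q^{S_T}R_{g,a}^{S_T}\), composes this with \eqref{eq:potential-response} to get the two conjugacy formulas, and derives the comparison from \(b_1=b_2\) and the commutation of the purely temporal operators with \(P_\varphi\); your added remarks on the smoothness of \(w\) and on uniqueness for the equation with the potential \(q\) make explicit what the paper leaves implicit.
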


\begin{proof}[Proof of Proposition~\ref{prop:boundary-data-correspondence}]
Let \(q\), \(U_b^{S_T}\), and \(T_q^{S_T}\) be defined by \eqref{eq:q-p}, \eqref{eq:U-definition}, and \eqref{eq:Tq-definition}, respectively.
Fix \(f\in\mathcal D_{S_T}\), let \(z=z^f\) solve the stationary problem \eqref{eq:stationary-forward}, and set
\[
w=T_q^{S_T}z.
\]
The spatial form of Lemma~\ref{lem:intertwining}, namely \eqref{eq:full-intertwining}, gives
\[
(\partial_s^2-a(x)^2\Delta_g+q(s))w
=
T_q^{S_T}(\partial_s^2-a(x)^2\Delta_g)z
=0.
\]
The same lemma preserves the zero initial data.
Hence \(w\) satisfies the potential problem \eqref{eq:w-equation} introduced in Lemma~\ref{lem:effective-time-rescaling}, and its boundary value is
\[
w|_{\partial M\times(0,S_T)}=T_q^{S_T}f.
\]
By the definition of the potential Dirichlet-to-Neumann map \(\Lambda_{g,a,q}^{S_T}\) in Lemma~\ref{lem:effective-time-rescaling}, uniqueness gives \(\Lambda_{g,a,q}^{S_T}(T_q^{S_T}f)=\partial_{\nu_g}w\).
On the other hand, the kernel \(K(s,r)\) is independent of \(x\), and the definition \eqref{eq:stationary-response} gives
\[
\partial_{\nu_g}w
=
T_q^{S_T}(\partial_{\nu_g}z)
=
T_q^{S_T}(R_{g,a}^{S_T}f).
\]
Combining the last two identities gives, for every \(f\in\mathcal D_{S_T}\),
\begin{equation}
\Lambda_{g,a,q}^{S_T}T_q^{S_T}
=
T_q^{S_T}R_{g,a}^{S_T}.
\label{eq:second-conjugacy}
\end{equation}
Since \(T_q^{S_T}\) is invertible,
\begin{equation}
R_{g,a}^{S_T}
=
(T_q^{S_T})^{-1}\Lambda_{g,a,q}^{S_T}T_q^{S_T}.
\label{eq:stationary-from-potential}
\end{equation}
Lemma~\ref{lem:effective-time-rescaling}, specifically \eqref{eq:potential-response}, gives the first conjugacy
\[
\Lambda_{g,a,q}^{S_T}
=
U_b^{S_T}\Lambda_{g,c}^T(U_b^{S_T})^{-1}.
\]
Substituting it into \eqref{eq:stationary-from-potential} yields
\begin{equation}
R_{g,a}^{S_T}
=
(T_q^{S_T})^{-1}
U_b^{S_T}\Lambda_{g,c}^T(U_b^{S_T})^{-1}
T_q^{S_T}.
\label{eq:master-conjugacy}
\end{equation}
Multiplying \eqref{eq:master-conjugacy} on the left by \((U_b^{S_T})^{-1}T_q^{S_T}\) and on the right by \((T_q^{S_T})^{-1}U_b^{S_T}\) yields
\begin{equation}
\Lambda_{g,c}^T
=
(U_b^{S_T})^{-1}T_q^{S_T}
R_{g,a}^{S_T}
(T_q^{S_T})^{-1}U_b^{S_T}.
\label{eq:physical-from-stationary}
\end{equation}

It remains to verify the determination statements.
Lemma~\ref{lem:boundary-symbol} recovers \(g_{\partial M}\) and \(c|_{\Sigma_T}\) from \(\Lambda_{g,c}^T\), and \eqref{eq:recover-b} then recovers \(b\).
The function \(b\) determines \(S_T\), the potential \(q\) through \eqref{eq:q-b}, and the operators \(U_b^{S_T}\), \(T_q^{S_T}\), and their inverses.
Formula \eqref{eq:master-conjugacy} therefore determines \(R_{g,a}^{S_T}\).
Conversely, \eqref{eq:physical-from-stationary} reconstructs \(\Lambda_{g,c}^T\) from \(b\) and \(R_{g,a}^{S_T}\), proving the exact correspondence in both directions.

Finally, suppose that two data sets satisfy \eqref{eq:boundary-identification}.
Equations \eqref{eq:boundary-metric-speed-recovery} and \eqref{eq:common-b} give
\[
\varphi^*g_{2,\partial M_2}=g_{1,\partial M_1},
\qquad
b_1=b_2=:b.
\]
Thus the two data sets have the same \(S_T\), \(q\), and temporal operators \(U_b^{S_T}\) and \(T_q^{S_T}\).
These operators and their inverses act only in time and hence commute with \(P_\varphi\).
\enlargethispage{2\baselineskip}
Using \eqref{eq:master-conjugacy} and then \eqref{eq:boundary-identification}, we obtain
\begin{align*}
R_{g_1,a_1}^{S_T}
&=
(T_q^{S_T})^{-1}U_b^{S_T}\Lambda_{g_1,c_1}^T(U_b^{S_T})^{-1}T_q^{S_T}
\\
&=
P_\varphi^{-1}
(T_q^{S_T})^{-1}U_b^{S_T}\Lambda_{g_2,c_2}^T(U_b^{S_T})^{-1}T_q^{S_T}
P_\varphi
\\
&=
P_\varphi^{-1}R_{g_2,a_2}^{S_T}P_\varphi.
\end{align*}

The proof is complete.
\end{proof}

\subsection{Invariant response of the weighted geometry}

Writing the identified stationary response as a boundary density gives the invariant datum of the weighted geometry.
\begin{lemma}
\label{lem:weighted-stationary-response}
Let \((M,g)\) be a smooth compact connected \(n\)-dimensional Riemannian manifold with smooth nonempty boundary, let \(a\in C^\infty(M)\) be positive, and let \(S_T>0\).
Set
\[
h:=a^{-2}g,
\qquad
\varrho:=a^{n-2},
\]
and define the associated weighted Laplacian and measure by
\begin{equation}
\Delta_{h,\varrho}u
=
\varrho^{-1}\operatorname{div}_h(\varrho\nabla_hu),
\qquad
\mathrm{d}\mu_{h,\varrho}=\varrho\,\mathrm{d}V_h.
\label{eq:weighted-laplacian-definition}
\end{equation}
Then
\begin{equation}
\Delta_{h,\varrho}=a^2\Delta_g,
\qquad
\mathrm{d}\mu_{h,\varrho}=a^{-2}\mathrm{d}V_g.
\label{eq:density-wave-equation}
\end{equation}
For \(f\in\mathcal D_{S_T}\), let \(z^f\) solve
\begin{equation}
\begin{cases}
z_{ss}^f-\Delta_{h,\varrho}z^f=0
&\text{in }M\times(0,S_T),\\
z^f|_{s=0}=z_s^f|_{s=0}=0
&\text{in }M,\\
z^f=f
&\text{on }\partial M\times(0,S_T).
\end{cases}
\label{eq:weighted-wave-problem}
\end{equation}
Then \eqref{eq:weighted-wave-problem} is the initial-boundary value problem \eqref{eq:stationary-forward}, and its boundary response density is
\begin{equation}
\mathcal B_{h,\varrho}^{S_T}f
:=
\varrho\,\partial_{\nu_h}z^f\,\mathrm{d}S_h
=
(R_{g,a}^{S_T}f)\,\mathrm{d}S_g.
\label{eq:weighted-boundary-density}
\end{equation}
Consequently, \((R_{g,a}^{S_T},g_{\partial M})\) determines \(\mathcal B_{h,\varrho}^{S_T}\).
\end{lemma}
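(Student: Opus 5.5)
The plan is a direct computation with conformal changes, carried out in local coordinates, followed by matching normals and surface measures on the boundary. It has three steps.

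\emph{Step 1: the weighted Laplacian.} With \(h=a^{-2}g\) we have \(h_{ij}=a^{-2}g_{ij}\), \(h^{ij}=a^{2}g^{ij}\) and \(\sqrt{\det h}=a^{-n}\sqrt{\det g}\). Hence \(\mathrm{d}V_h=a^{-n}\mathrm{d}V_g\) and \(\varrho\,\mathrm{d}V_h=a^{n-2}a^{-n}\mathrm{d}V_g=a^{-2}\mathrm{d}V_g\), which is the measure identity in \eqref{eq:density-wave-equation}. For the operator, write \(\varrho\,\mathrm{d}V_h\)-divergence form in coordinates:
\[
\Delta_{h,\varrho}u=\frac{1}{\varrho\sqrt{\det h}}\,\partial_i\bigl(\varrho\sqrt{\det h}\,h^{ij}\partial_ju\bigr).
\]
The key observation is that \(\varrho\sqrt{\det h}\,h^{ij}=a^{n-2}a^{-n}a^{2}\sqrt{\det g}\,g^{ij}=\sqrt{\det g}\,g^{ij}\). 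Therefore
\[
\Delta_{h,\varrho}u=\frac{a^{2}}{\sqrt{\det g}}\,\partial_i\bigl(\sqrt{\det g}\,g^{ij}\partial_ju\bigr)=a^2\Delta_gu.
\]
This shows that \eqref{eq:weighted-wave-problem} coincides with \eqref{eq:stationary-forward}. The two problems have the same data, so uniqueness in Proposition~\ref{prop:forward-well-posedness-causality} gives the same solution \(z^f\).

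\emph{Step 2: the boundary density.} The unit normals are related by \(\nu_h=a\,\nu_g\), since \(|a\nu_g|_h^2=a^{-2}a^2=1\) and the direction is unchanged. This gives \(\partial_{\nu_h}z=a\,\partial_{\nu_g}z\). The induced boundary metrics satisfy \(h_{\partial M}=a^{-2}g_{\partial M}\), so \(\mathrm{d}S_h=a^{-(n-1)}\mathrm{d}S_g\). Combining these,
\[
\varrho\,\partial_{\nu_h}z\,\mathrm{d}S_h=a^{n-2}\cdot a\cdot a^{-(n-1)}\,\partial_{\nu_g}z\,\mathrm{d}S_g=\partial_{\nu_g}z\,\mathrm{d}S_g=(R^{S_T}_{g,a}f)\,\mathrm{d}S_g,
\]
which is \eqref{eq:weighted-boundary-density}.

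\emph{Step 3: the determination statement.} The boundary metric \(g_{\partial M}\) determines \(\mathrm{d}S_g\). Multiplying the function \(R^{S_T}_{g,a}f\) by this density then yields \(\mathcal B^{S_T}_{h,\varrho}f\) for every \(f\in\mathcal D_{S_T}\).

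No step is a genuine obstacle; the only care needed is in the exponent bookkeeping. The choice \(\varrho=a^{n-2}\) is exactly what makes the weighted divergence coefficient reduce to that of \(g\), and the same weight is what cancels the powers of \(a\) in the boundary density.
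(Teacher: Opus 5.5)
Your proposal is correct and follows essentially the same route as the paper. The paper uses the invariant conformal identities \(\nabla_h u=a^2\nabla_g u\), \(\mathrm{d}V_h=a^{-n}\mathrm{d}V_g\) and \(\operatorname{div}_hX=a^n\operatorname{div}_g(a^{-n}X)\) where you use the equivalent coordinate identity \(\varrho\sqrt{\det h}\,h^{ij}=\sqrt{\det g}\,g^{ij}\); the boundary step (\(\nu_h=a\nu_g\), \(\mathrm{d}S_h=a^{-(n-1)}\mathrm{d}S_g\)) is identical.
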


\begin{proof}
For \(h=a^{-2}g\),
\[
\nabla_hu=a^2\nabla_gu,
\qquad
\mathrm{d}V_h=a^{-n}\mathrm{d}V_g,
\qquad
\operatorname{div}_hX=a^n\operatorname{div}_g(a^{-n}X).
\]
Since \(\varrho=a^{n-2}\), these identities give
\[
\Delta_{h,\varrho}u
=
a^{2-n}a^n\operatorname{div}_g\!\left(a^{-n}a^{n-2}a^2\nabla_gu\right)
=
a^2\Delta_gu,
\qquad
\mathrm{d}\mu_{h,\varrho}
=
a^{n-2}a^{-n}\mathrm{d}V_g
=
a^{-2}\mathrm{d}V_g.
\]
Thus the differential equations in \eqref{eq:weighted-wave-problem} and \eqref{eq:stationary-forward} agree; their initial and boundary conditions are identical.
For the conformal metric \(h=a^{-2}g\), one has
\begin{equation}
\nu_h=a\nu_g,
\qquad
\mathrm{d}S_h=a^{-(n-1)}\mathrm{d}S_g,
\qquad
\varrho\partial_{\nu_h}z\,\mathrm{d}S_h
=
\partial_{\nu_g}z\,\mathrm{d}S_g.
\label{eq:acoustic-flux-density}
\end{equation}
Indeed, the last identity follows from \(\varrho=a^{n-2}\) and the first two identities.
The final equality in \eqref{eq:weighted-boundary-density} now follows from \eqref{eq:acoustic-flux-density}.
Since \(g_{\partial M}\) determines \(\mathrm{d}S_g\), the final assertion follows.
\end{proof}

Recall that \(P_\varphi\) is defined in \eqref{eq:boundary-pushforward}.
For a boundary density \(\mu_{\partial,2}\) on \(\partial M_2\), its pullback is defined by
\[
\int_{\partial M_1}\psi\,\varphi^*\mu_{\partial,2}
=
\int_{\partial M_2}(\psi\circ\varphi^{-1})\,\mu_{\partial,2}
\]
for every \(\psi\in C^\infty(\partial M_1)\).
In local boundary coordinates, if \(\mu_{\partial,2}=\sigma(z)|\mathrm{d}z|\), then
\[
\varphi^*\mu_{\partial,2}
=
\sigma(\varphi(y))|\det D\varphi(y)|\,|\mathrm{d}y|.
\]

Assume that the original Dirichlet-to-Neumann maps satisfy
\[
\Lambda_{g_1,c_1}^T
=
P_\varphi^{-1}\Lambda_{g_2,c_2}^TP_\varphi.
\]
Then Proposition~\ref{prop:boundary-data-correspondence} and Lemma~\ref{lem:weighted-stationary-response} give
\[
b_1=b_2=:b,
\qquad
\varphi^*g_{2,\partial M_2}=g_{1,\partial M_1},
\qquad
S_T=\int_0^T b(t)\,\mathrm{d}t,
\]
and, for every \(f\in C_c^\infty((0,S_T)\times\partial M_1)\),
\begin{equation}
\mathcal B_{h_1,\varrho_1}^{S_T}f
=
\varphi^*\!\left(
\mathcal B_{h_2,\varrho_2}^{S_T}(P_\varphi f)
\right).
\label{eq:weighted-response-equality}
\end{equation}
Thus the original data determine the common temporal factor \(b\) and the invariant weighted response \eqref{eq:weighted-response-equality}.

\section{Finite-time reconstruction of the weighted geometry}
\label{sec:finite-time-recovery}

Under the visibility condition, the response identity \eqref{eq:weighted-response-equality} determines \((M,h,\varrho)\) up to a weighted isometry, that is, a diffeomorphism preserving both \(h\) and \(\varrho\), and extending the boundary identification.
At the midpoint \(S_T/2\), the proof passes from the response to the terminal Gramian, labeled domains of influence, truncated boundary distance profiles, \(h\), and finally \(\varrho\,\mathrm{d}V_h\) and \(\varrho\).
This reconstruction holds for \(n\geq2\); the condition \(n\geq3\) is used only in Section~\ref{sec:completion-main-theorem}.
The argument belongs to the boundary control framework developed in \cite{belishev1991boundary,belishev1992reconstruction,belishev1997boundary,katchalov2001inverse,katchalov2004equivalence}; related continuation, reconstruction, and stability results are given in \cite{kurylev2002hyperbolic,dehoop2018recovery,stefanov1998stability,stefanov2005stable,katchalov2004energy}.
All function spaces below are over \(\mathbb R\); complex data follow by complexification, with \(\langle u,v\rangle_{L^2(\mathrm{d}\mu)}=\int u\overline v\,\mathrm{d}\mu\).

\begin{proposition}
\label{prop:finite-time-spatial-recovery}
Let \(S_T>0\), let \((M_j,h_j)\), \(j=1,2\), be smooth compact connected \(n\)-dimensional Riemannian manifolds, \(n\geq2\), with smooth nonempty boundaries, and let \(\varrho_j\in C^\infty(M_j)\) be strictly positive.
For \(j=1,2\), let \(\mathcal B_{h_j,\varrho_j}^{S_T}\) be the boundary response density for \((\partial_s^2-\Delta_{h_j,\varrho_j})z=0\) with zero initial data, as in \eqref{eq:weighted-wave-problem}--\eqref{eq:weighted-boundary-density}.
Suppose that \(\varphi:\partial M_1\to\partial M_2\) is a diffeomorphism and that, for every
\(f\in C_c^\infty((0,S_T)\times\partial M_1)\),
\[
\mathcal B_{h_1,\varrho_1}^{S_T}f
=
\varphi^*\!\left(
\mathcal B_{h_2,\varrho_2}^{S_T}(P_\varphi f)
\right).
\]
Assume also that
\begin{equation}
S_T>2\max\{\operatorname{rad}(h_1),\operatorname{rad}(h_2)\}.
\label{eq:weighted-filling-condition}
\end{equation}
Then there is a diffeomorphism \(\Phi:M_1\to M_2\) such that
\begin{equation}
\Phi|_{\partial M_1}=\varphi,
\qquad
\Phi^*h_2=h_1,
\qquad
\Phi^*\varrho_2=\varrho_1.
\label{eq:weighted-isometry-conclusion}
\end{equation}
\end{proposition}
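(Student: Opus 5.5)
We plan to follow the boundary control method on the midpoint scale \(S:=S_T/2\), treating both manifolds simultaneously and transporting every object through \(\varphi\) and \(P_\varphi\). Since \(\mathcal B_{h,\varrho}^{S_T}f=(R_{g,a}^{S_T}f)\,\mathrm dS_g\) is the weighted flux density, the weighted Green formula for \(\partial_s^2-\Delta_{h,\varrho}\) in \(L^2(M,\mathrm d\mu_{h,\varrho})\) is available. The first step is the Blagoveščenskiĭ identity. For \(f,k\in C_c^\infty((0,S)\times\partial M)\), set
\[
W(t,s)=\langle z^f(t),z^k(s)\rangle_{L^2(\mathrm d\mu_{h,\varrho})}.
\]
This function satisfies \((\partial_t^2-\partial_s^2)W=\int_{\partial M}\bigl(f\,\mathcal B k-\mathcal B f\,k\bigr)\), with zero Cauchy data. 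Solving this one-dimensional wave equation on the triangle \(\{0\le s\le t\le 2S-s\}\) expresses \(W(S,S)\) through \(\mathcal B^{2S}=\mathcal B^{S_T}\) restricted to times below \(S_T\). Hence the terminal Gramian \(\langle z^f(S),z^k(S)\rangle\) is determined by the data, and it agrees for the two models under \(P_\varphi\). We apply the same formula to obtain \(\langle z^f(S),1\rangle\) by taking the \(k\)-wave replaced by the constant solution \(1\). The Green formula then gives this quantity from \(\int\int \mathcal B f\) weighted by \((S-s)\).

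The second step is approximate controllability. Tataru's unique continuation for operators with time-independent coefficients shows that, for \(\Gamma\subset\partial M\) open and \(\tau\le S\), the set \(\{z^f(S):\operatorname{supp}f\subset(S-\tau,S)\times\Gamma\}\) is dense in \(L^2(M(\Gamma,\tau))\), where \(M(\Gamma,\tau)=\{x:\operatorname{dist}_h(x,\Gamma)<\tau\}\); more generally this holds for functions \(\tau:\partial M\to[0,S]\). Using the Gramian and the pairing with \(1\), we can then compute \(\mu_{h,\varrho}\bigl(\bigcap_i M(\Gamma_i,\tau_i)\setminus\bigcup_j M(\Gamma_j',\sigma_j)\bigr)\) for finite collections. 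This is done by minimizing quadratic functionals in the standard way, namely by projecting \(1\) onto the closures. These volumes let us decide whether a boundary distance profile is realized. For \(x\in M\) let \(r_x(y)=\operatorname{dist}_h(x,y)\) and \(r_x^S=\min(r_x,S)\). A candidate function \(r\) is realized if and only if, for all small \(\epsilon\) and finitely many points, the corresponding intersections of thin domains of influence have positive measure. Since both models share these volumes through \(\varphi\), the truncated profile sets \(\mathcal R_S(M_j)=\{r_x^S\}\) correspond under composition with \(\varphi\).

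The third step is where visibility enters. By \eqref{eq:weighted-filling-condition}, \(S>\operatorname{rad}(h_j)\), so every \(x\) has \(\min_y r_x(y)<S\). Near a nearest boundary point the truncation does not destroy local information, and \(x\mapsto r_x^S\) is an injective homeomorphism onto \(\mathcal R_S\) with the sup-norm topology. We expect this step, injectivity and the smooth structure for truncated profiles on possibly trapping manifolds, to be the main obstacle. We would first follow the Katchalov--Kurylev--Lassas reconstruction from boundary distance functions, localizing near the minimizing boundary point: the values \(r_x(y)\) for \(y\) near the foot point with \(r_x(y)<S\) already give local coordinates, including at points \(x\) near the boundary. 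Away from the boundary we use the fact that every point lies within distance \(<S\) of \(\partial M\), so some profile values are untruncated. The metric \(h\) is recovered from these coordinates through the local distance functions. Composing the two embeddings yields \(\Phi\) with \(\Phi|_{\partial M_1}=\varphi\), because boundary points have profiles vanishing at themselves, and \(\Phi^*h_2=h_1\).

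Finally, the volumes \(\mu_{h,\varrho}(M(\Gamma,\tau))\) of small sets, or equivalently the Gramian of waves concentrating near a point, determine the measure \(\varrho\,\mathrm dV_h\) pushed forward by \(\Phi\). Since \(\mathrm dV_h\) is already matched, we obtain \(\Phi^*\varrho_2=\varrho_1\).
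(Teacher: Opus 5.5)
Your proposal is correct and follows essentially the paper's route: the Blagovestchenskii identity at the midpoint $S_T/2$ together with the pairing against the constant function, Tataru-based approximate controllability in domains of influence, Boolean combinations of these domains to match the truncated boundary distance profiles, the Katchalov--Kurylev--Lassas reconstruction (with visibility ensuring that every point has an untruncated profile value near a foot point), and comparison of the measures $\varrho\,\mathrm{d}V_h$ to recover $\varrho$. The paper phrases the middle step through the unitary induced by the common Gramian and the nonvanishing of conjugated multiplication projections, rather than through explicit volume computations, which is equivalent; note only that the source term in your Blagovestchenskii equation should be $\int_{\partial M}(\mathcal B f\,k-f\,\mathcal B k)$ (you have the opposite sign), which is harmless.
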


\subsection{Geometric preliminaries: nullity of distance level sets}

The geometric reconstruction uses the following measure property of distance functions.
\begin{lemma}
\label{lem:distance-level-null}
Let \((N,h)\) be a smooth compact connected Riemannian manifold with nonempty boundary, and set
\[
d_{\partial N}(x)=\operatorname{dist}_h(x,\partial N).
\]
For every \(r>0\), the level set
\[
\{x\in N:d_{\partial N}(x)=r\}
\]
has zero Riemannian volume.
For every fixed \(z\in\overline N\) and \(r>0\), the distance sphere
\[
\{x\in N:\operatorname{dist}_h(x,z)=r\}
\]
also has zero Riemannian volume.
Consequently, both types of level sets are null for every smooth positive
weighted measure on \(N\).
\end{lemma}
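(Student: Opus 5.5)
The plan is to use the Lipschitz structure of the distance functions. The main tool is Rademacher's theorem combined with the Lebesgue density theorem.

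Let \(f\) denote either \(d_{\partial N}\) or \(\operatorname{dist}_h(\cdot,z)\). Then \(f\) is \(1\)-Lipschitz with respect to \(h\). By Rademacher's theorem, applied in coordinate charts, \(f\) is differentiable almost everywhere. Fix \(r>0\) and set \(E=\{f=r\}\cap N^\circ\); the boundary itself has zero volume.

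\emph{Step 1: the gradient has unit length almost everywhere.} I would show that at every interior point \(x\) where \(f\) is differentiable and \(f(x)>0\), one has \(|\nabla f(x)|_h=1\). Take a minimizing curve \(\gamma\) of length \(f(x)\) from \(x\) to \(\partial N\) (respectively, to \(z\)). Such a curve exists by compactness, since \(N\) is a compact length space. An initial segment of \(\gamma\) is a unit-speed \(C^1\) curve near the interior point \(x\), because shortest paths in manifolds with boundary are \(C^1\). Along this segment \(f(\gamma(\tau))=f(x)-\tau\). Differentiating at \(\tau=0\) gives \(\mathrm{d}f(\dot\gamma(0))=-1\). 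Combined with the \(1\)-Lipschitz bound \(|\nabla f|\le1\), this yields \(|\nabla f(x)|_h=1\).

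\emph{Step 2: the level set is null.} Suppose \(E\) had positive volume. By the Lebesgue density theorem, take a density point \(x\in E\) at which \(f\) is also differentiable; this is possible because both properties hold almost everywhere. Then \(f\equiv r\) on a set of density \(1\) at \(x\). I would argue that this forces \(\mathrm{d}f(x)=0\). Suppose \(\mathrm{d}f(x)=\xi\neq0\). In a coordinate ball of radius \(\epsilon\), the cone \(\{v:\xi(v)>\tfrac12|\xi||v|\}\) occupies a fixed positive proportion of the ball. On that cone, differentiability gives \(f(x+v)-r\geq\tfrac14|\xi||v|>0\) for small \(v\), so \(f\neq r\) there. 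Hence \(E\) has density strictly less than \(1\) at \(x\), which is a contradiction. So \(\mathrm{d}f(x)=0\), but this contradicts Step 1, since \(r>0\). Therefore \(\operatorname{vol}_h(E)=0\).

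The final assertion follows because every smooth positive weighted measure \(\varrho\,\mathrm{d}V_h\) is absolutely continuous with respect to \(\mathrm{d}V_h\).

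The main obstacle is Step 1 near the boundary. Minimizing curves in a manifold with boundary may touch \(\partial N\), and they are only \(C^1\) rather than geodesic there. What Step 1 actually needs is weaker: the one-sided identity \(f(\gamma(\tau))=f(x)-\tau\) together with the existence of \(\dot\gamma(0)\) at an interior point. Since \(x\) is interior, the initial piece of \(\gamma\) is an honest \(h\)-geodesic until it first reaches \(\partial N\), and this avoids any boundary regularity issue. For the sphere case with \(z\in\partial N\) the same argument applies. If needed, I would instead use the one-sided derivative along any short minimizing curve and only the inequality \(\mathrm{d}f(\dot\gamma(0))\le-1\).
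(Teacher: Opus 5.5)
Your proposal is correct and follows essentially the same route as the paper. Both arguments establish the eikonal identity \(|\nabla f|_h=1\) at interior differentiability points via a minimizing curve, then use the density-point argument to show that a positive-volume level set would force \(\mathrm{d}f=0\); your explicit cone estimate and your remark that only the initial interior (hence geodesic) piece of the minimizing curve matters fill in details the paper leaves implicit.
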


\begin{proof}
The function \(d_{\partial N}\) is \(1\)-Lipschitz.
At every differentiability point \(x\in N^\circ\), compactness provides a closest boundary point and a minimizing unit-speed geodesic \(\gamma\) from \(x\) to that point.
Along its initial segment,
\[
d_{\partial N}(\gamma(s))=d_{\partial N}(x)-s.
\]
Therefore
\[
\mathrm{d}d_{\partial N}(x)(\dot\gamma(0))=-1.
\]
Together with the Lipschitz bound, this gives
\[
|\nabla_h d_{\partial N}|_h=1
\quad\text{almost everywhere in }N^\circ.
\]
Suppose that \(E=\{d_{\partial N}=r\}\) has positive volume for some \(r>0\).
At almost every density point of \(E\) where \(d_{\partial N}\) is differentiable,
the first-order expansion of \(d_{\partial N}\) on the density-one set \(E\)
implies \(\mathrm{d}d_{\partial N}=0\).
This contradicts the preceding eikonal identity.
For a fixed \(z\in\overline N\), the function \(d_z(x)=\operatorname{dist}_h(x,z)\) is also \(1\)-Lipschitz.
At every differentiability point with \(d_z(x)>0\), a minimizing geodesic from \(x\) to \(z\) gives \(|\nabla_h d_z|_h=1\).
Repeating the density point argument proves that every positive level set of \(d_z\) has zero Riemannian volume.
The assertion for a smooth positive weighted measure follows because its density
with respect to Riemannian volume is bounded on the compact manifold \(N\).
\end{proof}

\subsection{Reconstruction from the finite-time weighted response}

We prove Proposition~\ref{prop:finite-time-spatial-recovery} in four steps.

\begin{proof}[Proof of Proposition~\ref{prop:finite-time-spatial-recovery}]
\medskip
\noindent
\emph{Step 1: the response density and the terminal Gramian.}
For \(j=1,2\), let \(z_j^f\) denote the solution on \((0,S_T)\times M_j\) of the weighted wave equation generated by \(\Delta_{h_j,\varrho_j}\), with zero initial data and Dirichlet source \(f\).
Thus \(\mathcal B_{h_j,\varrho_j}^{S_T}f=\varrho_j\partial_{\nu_{h_j}}z_j^f\,\mathrm{d}S_{h_j}\).
We derive from Green's formula an identity expressing terminal inner products of waves in terms of the boundary response; see
\cite{belishev1997boundary,katchalov2004equivalence}
for the Blagovestchenskii identity.
Set
\[
\mathrm{d}\mu_j=\varrho_j\,\mathrm{d}V_{h_j},
\qquad
\mathrm{d}\mu_{j,\partial}=\varrho_j\,\mathrm{d}S_{h_j}.
\]
For smooth \(u\) and \(w\), the weighted Green identity is
\begin{equation}
\int_{M_j}(\Delta_{h_j,\varrho_j}u)w\,\mathrm{d}\mu_j
=
-\int_{M_j}\langle\nabla_{h_j}u,\nabla_{h_j}w\rangle_{h_j}\,\mathrm{d}\mu_j
+\int_{\partial M_j}\partial_{\nu_{h_j}}u\,w\,\mathrm{d}\mu_{j,\partial}.
\label{eq:weighted-green-identity}
\end{equation}
In particular, the Dirichlet realization of \(-\Delta_{h_j,\varrho_j}\) is the nonnegative self-adjoint operator associated with the closed Dirichlet energy form in \(L^2(M_j,\mathrm{d}\mu_j)\).

For controls supported in \((0,S_T/2)\times\partial M_j\), define
\[
W_j^{S_T/2} f=z_j^f(S_T/2).
\]
If
\[
I_{f,k}(t,s)
=
\langle z_j^f(t),z_j^k(s)\rangle_{L^2(M_j,\mathrm{d}\mu_j)},
\]
then \eqref{eq:weighted-green-identity} gives
\begin{equation}
(\partial_t^2-\partial_s^2)I_{f,k}(t,s)
=
\int_{\partial M_j}k(s)\,\mathcal B_{h_j,\varrho_j}^{S_T}f(t)
-\int_{\partial M_j}f(t)\,\mathcal B_{h_j,\varrho_j}^{S_T}k(s).
\label{eq:weighted-blagovestchenskii-wave}
\end{equation}
The zero initial data supply homogeneous data on the coordinate axes.
Writing the right-hand side of \eqref{eq:weighted-blagovestchenskii-wave} as \(F_{f,k}(t,s)\), the d'Alembert formula at \((S_T/2,S_T/2)\) is
\begin{equation}
I_{f,k}(S_T/2,S_T/2)
=
\frac12\int_0^{S_T/2}\int_t^{S_T-t}F_{f,k}(t,s)\,\mathrm{d}s\,\mathrm{d}t.
\label{eq:weighted-dalembert-formula}
\end{equation}
Consequently, the boundary response density on \((0,S_T)\) determines the terminal Gramian
\begin{equation}
G_j(f,k)
=
\langle W_j^{S_T/2} f,W_j^{S_T/2} k\rangle_{L^2(M_j,\mathrm{d}\mu_j)}.
\label{eq:weighted-terminal-gramian}
\end{equation}
The characteristic triangle in \eqref{eq:weighted-dalembert-formula} lies in \((0,S_T)^2\).

\medskip
\noindent\emph{Step 2: finite propagation and approximate controllability in domains of influence.}
Fix a nonempty relatively open set \(\Gamma\subset\partial M_j\) and a number \(0<r<S_T/2\).
Write
\[
M_{h_j}(\Gamma,r)
=
\{x\in M_j:\operatorname{dist}_{h_j}(x,\Gamma)<r\}
\]
and
\[
\mathscr W_j(\Gamma,r)
=
\{z_j^f(S_T/2):f\in C_c^\infty((S_T/2-r,S_T/2)\times\Gamma)\}.
\]
Finite propagation for the metric \(h_j\) governing travel times gives
\begin{equation}
\mathscr W_j(\Gamma,r)
\subset
L^2(M_{h_j}(\Gamma,r),\mathrm{d}\mu_j).
\label{eq:weighted-finite-propagation}
\end{equation}

Let \(\psi\in L^2(M_{h_j}(\Gamma,r),\mathrm{d}\mu_j)\) be
orthogonal to \(\mathscr W_j(\Gamma,r)\) in this space, extend \(\psi\) by
zero to \(M_j\), and let \(w\) be the energy solution of
\begin{equation}
\begin{cases}
w_{ss}-\Delta_{h_j,\varrho_j}w=0
&\text{in }(0,S_T/2)\times M_j,\\
w=0
&\text{on }(0,S_T/2)\times\partial M_j,\\
w(S_T/2)=0,\quad w_s(S_T/2)=\psi
&\text{in }M_j.
\end{cases}
\label{eq:weighted-backward-wave}
\end{equation}
The hidden regularity estimate for the homogeneous Dirichlet wave equation
gives the normal trace
\[
\partial_{\nu_{h_j}}w
\in
L^2((0,S_T/2)\times\partial M_j,
\mathrm{d}s\,\mathrm{d}\mu_{j,\partial})
\]
and, more precisely,
\begin{equation}
\|\partial_{\nu_{h_j}}w\|_{L^2((0,S_T/2)\times\partial M_j,
\mathrm{d}s\,\mathrm{d}\mu_{j,\partial})}
\leq C
\left(
\|w(S_T/2)\|_{H_0^1(M_j)}
+\|w_s(S_T/2)\|_{L^2(M_j,\mathrm{d}\mu_j)}
\right).
\label{eq:weighted-hidden-regularity}
\end{equation}
For smooth terminal data, the standard Rellich multiplier argument with a
vector field equal to \(\nu_{h_j}\) on the boundary produces the boundary term
\(\int |\partial_{\nu_{h_j}}w|^2\,\mathrm{d}s\,\mathrm{d}\mu_{j,\partial}\);
all interior and endpoint terms are bounded by the conserved energy.
The smooth positive weight changes only lower-order terms and replaces
\(\mathrm{d}S_{h_j}\) by
\(\mathrm{d}\mu_{j,\partial}=\varrho_j\mathrm{d}S_{h_j}\).
Density then proves \eqref{eq:weighted-hidden-regularity} for energy solutions;
see \cite[Chapter~1]{lions1988controlabilite}.
Approximating \(\psi\) by smooth
terminal velocities therefore justifies both the spacetime Green identity and
the limiting normal trace in \eqref{eq:weighted-terminal-duality}.
The spacetime Green identity yields the terminal duality formula
\begin{equation}
\langle z_j^f(S_T/2),\psi\rangle_{L^2(M_j,\mathrm{d}\mu_j)}
=
\int_{S_T/2-r}^{S_T/2}\int_\Gamma
f\,\partial_{\nu_{h_j}}w\,\mathrm{d}\mu_{j,\partial}\,\mathrm{d}s.
\label{eq:weighted-terminal-duality}
\end{equation}
Orthogonality and the arbitrariness of \(f\) imply
\[
w=\partial_{\nu_{h_j}}w=0
\quad\text{on }(S_T/2-r,S_T/2)\times\Gamma
\]
in the trace sense.

The terminal condition \(w(S_T/2)=0\) permits the odd reflection
\[
\widetilde w(s,x)
=
\begin{cases}
w(s,x),&S_T/2-r<s\leq S_T/2,\\
-w(S_T-s,x),&S_T/2\leq s<S_T/2+r.
\end{cases}
\]
The time derivatives from the two adjacent intervals agree at \(s=S_T/2\), so \(\widetilde w\) is an energy solution of the same wave equation on \((S_T/2-r,S_T/2+r)\times M_j\).
Moreover,
\begin{equation}
\widetilde w=\partial_{\nu_{h_j}}\widetilde w=0
\quad\text{on }(S_T/2-r,S_T/2+r)\times\Gamma.
\label{eq:weighted-reflected-cauchy-data}
\end{equation}
The cylinder extending to both sides of \(s=S_T/2\) in \eqref{eq:weighted-reflected-cauchy-data} yields a nondegenerate boundary double cone there.

Set
\[
m=\varrho_j^{1/2},
\qquad
\zeta=m\widetilde w,
\qquad
V=m^{-1}\Delta_{h_j}m.
\]
Expanding \(\Delta_{h_j,\varrho_j}\) with \(\varrho_j=m^2\) yields
\begin{equation}
m\Delta_{h_j,\varrho_j}m^{-1}
=
\Delta_{h_j}-m^{-1}(\Delta_{h_j}m),
\label{eq:weighted-scalar-conjugation}
\end{equation}
and hence
\[
(\partial_s^2-\Delta_{h_j}+V)\zeta=0.
\]
Since \(\widetilde w=0\) on the boundary, \eqref{eq:weighted-reflected-cauchy-data} is equivalent to
\[
\zeta=\partial_{\nu_{h_j}}\zeta=0
\quad\text{on }(S_T/2-r,S_T/2+r)\times\Gamma.
\]
The formulation in terms of boundary domains needed here is stated explicitly in
\cite[Theorem~4]{katchalov2004energy}; see also
\cite[Theorem~3.16]{katchalov2001inverse}.
It is obtained from Tataru's local
unique continuation theorem across noncharacteristic hypersurfaces
\cite{tataru1995unique} and applies to time-independent wave operators with
smooth lower-order terms.
To obtain the full spacetime cone, fix \(s_*\) with \(|s_*-S_T/2|<r\) and set
\[
\tau_*=r-|s_*-S_T/2|.
\]
The interval \((s_*-\tau_*,s_*+\tau_*)\) is contained in
\((S_T/2-r,S_T/2+r)\).
Applying the cited boundary theorem after translating
time by \(s_*\) gives vanishing at time \(s_*\) whenever
\(\operatorname{dist}_{h_j}(x,\Gamma)<\tau_*\).
Since \(s_*\)
is arbitrary, this gives the double cone conclusion
\begin{equation}
\zeta(s,x)=0
\quad\text{if}\quad
|s-S_T/2|+\operatorname{dist}_{h_j}(x,\Gamma)<r
\label{eq:weighted-double-cone}
\end{equation}
for smooth solutions of the operator \(\partial_s^2-\Delta_{h_j}+V\).
To extend the double cone conclusion from smooth to energy solutions, fix \(\varepsilon>0\) and let \(\zeta_\delta=\chi_\delta*_s\zeta\), where \(\chi_\delta\) is an even time mollifier of scale \(0<\delta<\varepsilon\).
We work on the shortened cylinder \((S_T/2-r+\varepsilon,S_T/2+r-\varepsilon)\times M_j\), where the convolution is well defined.
Because the coefficients are time independent, \(\zeta_\delta\) satisfies the same equation.
Time convolution also commutes with the boundary traces, so
\[
\zeta_\delta=\partial_{\nu_{h_j}}\zeta_\delta=0
\quad\text{on }(S_T/2-r+\varepsilon,S_T/2+r-\varepsilon)\times\Gamma.
\]
The function \(\zeta_\delta\) is smooth in time with values in the energy domain.
The identity \(\Delta_{h_j}\zeta_\delta=\partial_s^2\zeta_\delta+V\zeta_\delta\), together with the homogeneous Dirichlet condition, allows Dirichlet elliptic regularity to be iterated and shows that \(\zeta_\delta\) is smooth in the spatial variables as well.
The smooth result therefore applies where \(|s-S_T/2|+\operatorname{dist}_{h_j}(x,\Gamma)<r-\varepsilon\).
The mollified solutions converge to \(\zeta\) in \(C(H^1)\cap C^1(L^2)\) on compact time intervals, so letting first \(\delta\downarrow0\) and then \(\varepsilon\downarrow0\) proves \eqref{eq:weighted-double-cone} for \(\zeta\).
This is also the unique continuation step used to prove approximate
controllability in domains of influence in
\cite[Theorem~5]{katchalov2004energy}.
Taking a time neighborhood of \(s=S_T/2\) inside \eqref{eq:weighted-double-cone} gives
\[
0=\zeta_s(S_T/2,x)=m(x)\psi(x)
\quad\text{for almost every }x\in M_{h_j}(\Gamma,r).
\]
Because \(m>0\), this proves \(\psi=0\) almost everywhere on \(M_{h_j}(\Gamma,r)\).
Together with \eqref{eq:weighted-finite-propagation}, the duality argument proves
\begin{equation}
\overline{\mathscr W_j(\Gamma,r)}^{\,L^2(M_j,\mathrm{d}\mu_j)}
=
L^2(M_{h_j}(\Gamma,r),\mathrm{d}\mu_j).
\label{eq:weighted-restricted-controllability}
\end{equation}
Equation~\eqref{eq:weighted-restricted-controllability} is the required approximate controllability conclusion.

\medskip
\noindent\emph{Step 3: a common Hilbert realization and reconstruction of the metric from travel times.}
Use \(\varphi\) to identify the two control spaces, and set
\[
\mathscr C=C_c^\infty((0,S_T/2)\times\partial M_1),
\qquad
W_1^{S_T/2} f=z_1^f(S_T/2),
\qquad
W_2^{S_T/2} f=z_2^{P_\varphi f}(S_T/2).
\]
By Step 1, the equality of the boundary response densities implies
\begin{equation}
G(f,k)
=
\langle W_1^{S_T/2} f,W_1^{S_T/2} k\rangle_{L^2(M_1,\mathrm{d}\mu_1)}
=
\langle W_2^{S_T/2} f,W_2^{S_T/2} k\rangle_{L^2(M_2,\mathrm{d}\mu_2)}.
\label{eq:common-weighted-gramian}
\end{equation}
Define
\[
\mathscr N
=
\{f\in\mathscr C:G(f,f)=0\},
\qquad
\mathscr H_G
=
\overline{\mathscr C/\mathscr N}^{\,G}.
\]
Let \(Q:\mathscr C\to\mathscr C/\mathscr N\) be the quotient map.
The maps
\[
\mathcal V_j[f]=W_j^{S_T/2} f
\]
are isometries from \(\mathscr C/\mathscr N\) into \(H_j=L^2(M_j,\mathrm{d}\mu_j)\).
Choose \(r_0\) with
\[
\max\{\operatorname{rad}(h_1),\operatorname{rad}(h_2)\}<r_0<S_T/2.
\]
Equation \eqref{eq:weighted-restricted-controllability}, applied with \(\Gamma=\partial M_j\) and \(r=r_0\), shows that both terminal control maps have dense range.
Consequently, each \(\mathcal V_j\) extends to a unitary map \(\mathscr H_G\to H_j\), and
\begin{equation}
U
=
\mathcal V_2\mathcal V_1^{-1}:H_1\longrightarrow H_2
\label{eq:common-unitary}
\end{equation}
is the unitary canonically induced by the common Gramian and satisfies
\(UW_1^{S_T/2} f=W_2^{S_T/2} f\).

For open \(\Gamma\subset\partial M_1\) and \(0<r<S_T/2\), set
\[
\mathscr C(\Gamma,r)
=
\{f\in\mathscr C:\operatorname{supp}f\subset(S_T/2-r,S_T/2)\times\Gamma\}
\]
and
\[
\mathscr H_G(\Gamma,r)
=
\overline{Q(\mathscr C(\Gamma,r))}^{\,\mathscr H_G}.
\]
Equation \eqref{eq:weighted-restricted-controllability} gives
\begin{align}
\mathcal V_1\mathscr H_G(\Gamma,r)
&=L^2(M_{h_1}(\Gamma,r),\mathrm{d}\mu_1),\notag\\
\mathcal V_2\mathscr H_G(\Gamma,r)
&=L^2(M_{h_2}(\varphi(\Gamma),r),\mathrm{d}\mu_2).
\label{eq:weighted-labelled-supports}
\end{align}
Thus the boundary response density determines the labeled family of influence
subspaces in one common Hilbert space.

We next recover the common representation by truncated boundary distances from the
labeled subspaces in \eqref{eq:weighted-labelled-supports}.
Put
\[
R=\frac{S_T}{2},
\qquad
\varphi_1=\operatorname{Id}_{\partial M_1},
\qquad
\varphi_2=\varphi,
\]
and, for \(z\in\partial M_1\) and \(0<s<R\), define
\[
\mathbb B_j(z,s)
=
\{x\in M_j:\operatorname{dist}_{h_j}(x,\varphi_j(z))<s\}.
\]
For every measurable set \(A\subset M_j\), let \(\Pi_A\) denote the
orthogonal projection of \(L^2(M_j,\mathrm{d}\mu_j)\) onto
\(L^2(A,\mathrm{d}\mu_j)\); equivalently, \(\Pi_A\) is
multiplication by the characteristic function of \(A\).
These balls centered at boundary points are determined by the labeled domains of
influence.
Indeed, choose neighborhoods \(\Gamma_m\) of \(z\) that shrink to
\(z\), and numbers \(\epsilon_m\downarrow0\), so that
\[
\operatorname{diam}_{h_1}(\Gamma_m)<\frac{\epsilon_m}{2},
\qquad
\operatorname{diam}_{h_2}(\varphi(\Gamma_m))<\frac{\epsilon_m}{2}.
\]
The triangle inequality gives, for \(j=1,2\),
\begin{equation}
\mathbb B_j(z,s)
=
\bigcup_{m:\,\epsilon_m<s}
M_{h_j}\bigl(\varphi_j(\Gamma_m),s-\epsilon_m\bigr).
\label{eq:boundary-ball-from-domains}
\end{equation}
For the nontrivial inclusion, if the distance from \(x\) to
\(\varphi_j(z)\) is strictly smaller than \(s\), then for all sufficiently
large \(m\) it is smaller than \(s-\epsilon_m\), and
\(\varphi_j(z)\in\varphi_j(\Gamma_m)\).
Conversely, membership in a set on
the right-hand side and the diameter bound imply
\(\operatorname{dist}_{h_j}(x,\varphi_j(z))<s-\epsilon_m/2<s\).
Taking closed spans of the corresponding \(L^2\)-spaces in
\eqref{eq:boundary-ball-from-domains} and using
\eqref{eq:weighted-labelled-supports} yields
\begin{equation}
U \Pi_{\mathbb B_1(z,s)}U^{-1}=\Pi_{\mathbb B_2(z,s)},
\qquad z\in\partial M_1,\quad 0<s<R.
\label{eq:boundary-ball-projections}
\end{equation}

Next define the truncated boundary distance maps
\begin{equation}
\mathcal R_j^R:M_j^R\longrightarrow C(\partial M_1),
\qquad
\mathcal R_j^R(x)(z)
=
\min\bigl\{\operatorname{dist}_{h_j}(x,\varphi_j(z)),R\bigr\},
\label{eq:truncated-boundary-distance-map}
\end{equation}
where
\[
M_j^R=\{x\in M_j:\operatorname{dist}_{h_j}(x,\partial M_j)<R\}.
\]
We claim that
\begin{equation}
\mathcal R_1^R(M_1^R)=\mathcal R_2^R(M_2^R).
\label{eq:equal-truncated-distance-images}
\end{equation}
To prove the claim, first observe that
\eqref{eq:boundary-ball-projections} and the identities for products and
complements of multiplication projections determine whether any finite set of
the form
\begin{equation}
\bigcap_{\ell=1}^N
\left\{
x\in M_j:
a_\ell<
\operatorname{dist}_{h_j}(x,\varphi_j(z_\ell))
<b_\ell
\right\},
\qquad 0\leq a_\ell<b_\ell<R,
\label{eq:finite-distance-cell}
\end{equation}
is empty.
More precisely, its multiplication projection is obtained by
multiplying the projections in \eqref{eq:boundary-ball-projections} and their
orthogonal complements.
Lemma~\ref{lem:distance-level-null} shows that every
distance sphere with positive radius has zero \(n\)-dimensional Riemannian
measure.
Thus open or closed choices at the endpoints do not change this
projection.
Since
\eqref{eq:finite-distance-cell} is open, it is nonempty if and only if the
resulting projection is nonzero.
Hence the answer is the same for \(j=1\)
and \(j=2\).

Fix \(x\in M_1^R\), set \(r=\mathcal R_1^R(x)\), and choose
\(z_0\in\partial M_1\) with
\[
\operatorname{dist}_{h_1}(x,z_0)
=
\operatorname{dist}_{h_1}(x,\partial M_1)<R.
\]
Let \(\{z_\ell\}_{\ell=1}^\infty\) be dense in \(\partial M_1\), with
\(z_1=z_0\).
For each \(m\), choose \(0<\eta_m<1/m\) so small that, for
\(1\leq\ell\leq m\), the interval
\((r(z_\ell)-\eta_m,r(z_\ell)+\eta_m)\) lies in \((0,R)\) whenever
\(0<r(z_\ell)<R\).
On \(M_j\), impose the following condition for each
\(\ell=1,\ldots,m\):
\[
\begin{cases}
\operatorname{dist}_{h_j}(\,\cdot\,,\varphi_j(z_\ell))<\eta_m,
& r(z_\ell)=0,\\
\bigl|\operatorname{dist}_{h_j}(\,\cdot\,,\varphi_j(z_\ell))-r(z_\ell)\bigr|<\eta_m,
&0<r(z_\ell)<R,\\
\operatorname{dist}_{h_j}(\,\cdot\,,\varphi_j(z_\ell))>R-\eta_m,
&r(z_\ell)=R.
\end{cases}
\]
The intersection of these finitely many open conditions contains \(x\) when
\(j=1\), including when \(x\in\partial M_1\) and hence
\(r(z_1)=0\).
Its multiplication projection is obtained from the ball
projections in \eqref{eq:boundary-ball-projections} and their complements;
as above, the boundary spheres are null sets.
The corresponding projection
for \(j=2\) is therefore nonzero, so the intersection is nonempty.
Choose a
point \(x_m'\) in it.
Then
\begin{equation}
\left|
\mathcal R_2^R(x_m')(z_\ell)-r(z_\ell)
\right|<\frac1m,
\qquad \ell=1,\ldots,m.
\label{eq:finite-profile-approximation}
\end{equation}
Thus the cases \(r(z_\ell)=0\) and \(r(z_\ell)=R\) are both covered by the
same projection test.
Compactness of \(M_2\) gives a convergent subsequence,
say \(x_m'\to x'\).
Passing to the limit in
\eqref{eq:finite-profile-approximation} gives equality at every \(z_\ell\),
and the \(1\)-Lipschitz dependence of the distance functions on the boundary
variable extends the equality to all \(z\in\partial M_1\).
Moreover,
\[
\operatorname{dist}_{h_2}(x',\varphi(z_0))
=r(z_0)<R,
\]
so \(x'\in M_2^R\).
Thus
\(\mathcal R_1^R(M_1^R)\subset\mathcal R_2^R(M_2^R)\); interchanging the two
manifolds proves \eqref{eq:equal-truncated-distance-images}.

We now invoke the reconstruction from truncated boundary distance data in
Katchalov--Kurylev--Lassas in its precise form.
In the notation of
\cite[Theorem~6, the paragraph following formula~(74), and Lemma~6]{katchalov2004energy},
with their truncation parameter \(T\) replaced here by \(R\), the hyperbolic
boundary form determines the set of truncated boundary distance functions;
the truncated boundary distance map is a homeomorphism onto this set, suitable
evaluation functions
\[
E_z(r)=r(z),
\qquad r\in\mathcal R_j^R(M_j^R),
\]
give local smooth coordinates, and the covectors of unit length supplied by these
distance functions determine the metric.
Lemma~6 of that paper concludes that
the Riemannian manifold \((M_j^R,h_j)\) can be reconstructed from this data.
The detailed construction for compact manifolds is given in
\cite[Sections~3.8.2--3.8.3 and~4.2]{katchalov2001inverse}.

Since \eqref{eq:equal-truncated-distance-images} gives the same profile set for
\(j=1,2\), the cited reconstruction equips it with a common smooth structure
for which both \(\mathcal R_j^R\) are diffeomorphisms, including up to the
boundary.
Its metric is read from the common image as follows.
In a coordinate system \(q^a=E_{z_a}\), let
\[
H_j=(h_j^{ab})_{a,b=1}^n
\]
be the inverse metric.
Whenever \(E_z<R\) and the associated distance
function is smooth, the eikonal equation gives
\begin{equation}
\sum_{a,b=1}^n
h_j^{ab}
\frac{\partial E_z}{\partial q^a}
\frac{\partial E_z}{\partial q^b}
=1.
\label{eq:eikonal-metric-reconstruction}
\end{equation}
By the cited reconstruction result based on boundary distances, the available covectors
\(\mathrm{d}E_z\) contain an open subset of the unit cotangent sphere and
hence determine the inverse metric through
\eqref{eq:eikonal-metric-reconstruction}.
Since the evaluation functions are identical on the common image
in \eqref{eq:equal-truncated-distance-images}, the two systems have the same
coefficients and therefore recover the same metric.

We may consequently define
\begin{equation}
\Phi^R
=
(\mathcal R_2^R)^{-1}\circ\mathcal R_1^R:
(M_1^R,h_1)\longrightarrow(M_2^R,h_2).
\label{eq:finite-depth-isometry-from-profiles}
\end{equation}
The preceding coordinate argument shows that \(\Phi^R\) is a smooth isometry.
If \(z\in\partial M_1\), then
\(\mathcal R_1^R(z)(z)=0\); equality of the profiles forces
\(\Phi^R(z)=\varphi(z)\).
Thus \(\Phi^R|_{\partial M_1}=\varphi\).
Finally, equality of the truncated profiles implies, whenever \(r<R\),
\begin{equation}
\Phi^R\bigl(M_{h_1}(\Gamma,r)\bigr)
=
M_{h_2}(\varphi(\Gamma),r)
\label{eq:finite-depth-domain-map}
\end{equation}
for every open \(\Gamma\subset\partial M_1\).

Condition \eqref{eq:weighted-filling-condition} gives \(M_j^R=M_j\).
We may
therefore write \(\Phi=\Phi^R\) and obtain
\begin{equation}
\Phi:M_1\longrightarrow M_2,
\qquad
\Phi|_{\partial M_1}=\varphi,
\qquad
\Phi^*h_2=h_1.
\label{eq:recovered-weighted-isometry}
\end{equation}
In particular, by \eqref{eq:finite-depth-domain-map},
\begin{equation}
\Phi(M_{h_1}(\Gamma,r))
=
M_{h_2}(\varphi(\Gamma),r)
\label{eq:weighted-domain-map}
\end{equation}
for every open \(\Gamma\subset\partial M_1\) and \(0<r<S_T/2\).

\medskip
\noindent\emph{Step 4: recovery of the weighted measure and the weight.}
Let \(1_j\) be the constant function one in \(H_j\).
For \(f\in\mathscr C\), set \(f_1=f\) and \(f_2=P_\varphi f\), and define
\[
J_{j,f}(t)
=
\langle z_j^{f_j}(t),1_j\rangle_{H_j}.
\]
Since \(\Delta_{h_j,\varrho_j}1_j=0\), Green's identity gives
\[
J_{j,f}''(t)
=
\int_{\partial M_j}\mathcal B_{h_j,\varrho_j}^{S_T}f_j(t).
\]
The zero initial conditions imply
\begin{equation}
\langle z_j^{f_j}(S_T/2),1_j\rangle_{H_j}
=
\int_0^{S_T/2}(S_T/2-t)
\int_{\partial M_j}\mathcal B_{h_j,\varrho_j}^{S_T}f_j(t)\,\mathrm{d}t.
\label{eq:weighted-constant-pairing}
\end{equation}
Equality of the response densities and the pullback convention therefore imply
\[
\langle W_1^{S_T/2} f,1_1\rangle_{H_1}
=
\langle W_2^{S_T/2} f,1_2\rangle_{H_2}.
\]
Using \eqref{eq:common-unitary} and the density of the terminal waves gives
\begin{equation}
U1_1=1_2.
\label{eq:weighted-constant-unitary}
\end{equation}

Recall the multiplication projections \(\Pi_A\) introduced in Step~3.
Equations \eqref{eq:weighted-labelled-supports}--\eqref{eq:weighted-domain-map} imply
\begin{equation}
U\Pi_{M_{h_1}(\Gamma,r)}U^{-1}
=
\Pi_{M_{h_2}(\varphi(\Gamma),r)}.
\label{eq:weighted-generator-projections}
\end{equation}
Let \(\mathfrak A_1\) be the algebra generated by all sets \(M_{h_1}(\Gamma,r)\) with \(\Gamma\subset\partial M_1\) open and \(0<r<S_T/2\).
For measurable sets \(A,B\subset M_1\), the corresponding multiplication projections satisfy
\[
\Pi_{A\cap B}=\Pi_A\Pi_B,
\qquad
\Pi_{M_1\setminus A}=I-\Pi_A,
\qquad
\Pi_{A\cup B}=\Pi_A+\Pi_B-\Pi_A\Pi_B.
\]
The same identities hold on \(M_2\), and \(\Phi\) respects intersections, complements, and finite unions.
Starting from \eqref{eq:weighted-generator-projections}, induction over finite Boolean operations therefore gives
\begin{equation}
U\Pi_AU^{-1}
=
\Pi_{\Phi(A)}
\quad\text{for every }A\in\mathfrak A_1.
\label{eq:weighted-algebra-projections}
\end{equation}
It follows from \eqref{eq:weighted-constant-unitary} that
\begin{equation}
\mu_2(\Phi(A))
=
\|\Pi_{\Phi(A)}1_2\|_{H_2}^2
=
\|\Pi_A1_1\|_{H_1}^2
=
\mu_1(A)
\label{eq:weighted-measure-algebra}
\end{equation}
for every \(A\in\mathfrak A_1\).

It remains to prove \(\sigma(\mathfrak A_1)=\mathcal B(M_1)\).
Choose a countable dense set \(D\subset\partial M_1\).
For every \(z\in D\), choose open boundary neighborhoods such that
\[
z\in\Gamma_m(z),
\qquad
\overline{\Gamma_{m+1}(z)}\subset\Gamma_m(z),
\qquad
\operatorname{diam}_{h_1}\Gamma_m(z)\longrightarrow0.
\]
For every rational \(0<q<S_T/2\), one has
\begin{equation}
\{x:\operatorname{dist}_{h_1}(x,z)<q\}
=
\bigcup_{\substack{p\in\mathbb Q\\0<p<q}}
\bigcap_{m=1}^\infty M_{h_1}(\Gamma_m(z),p).
\label{eq:weighted-borel-generation}
\end{equation}
Indeed, if \(\operatorname{dist}_{h_1}(x,z)<q\), choose a rational number \(p\) such that \(\operatorname{dist}_{h_1}(x,z)<p<q\); since \(z\in\Gamma_m(z)\), the point \(x\) belongs to \(M_{h_1}(\Gamma_m(z),p)\) for every \(m\).
Conversely, if \(x\) belongs to the right-hand side for some \(p<q\), choose \(z_m\in\Gamma_m(z)\) with \(\operatorname{dist}_{h_1}(x,z_m)<p\); then \(z_m\to z\), and hence \(\operatorname{dist}_{h_1}(x,z)\leq p<q\).
Moreover, the map
\[
\mathcal R_D:M_1\longrightarrow[0,S_T/2]^D,
\qquad
\mathcal R_D(x)(z)
=
\min\{\operatorname{dist}_{h_1}(x,z),S_T/2\},
\]
is injective.
Indeed, suppose that \(\mathcal R_D(x)=\mathcal R_D(y)\).
Equality on \(D\) extends to the whole boundary by continuity in \(z\).
If \(z_0\) minimizes the boundary distance from \(x\), then \(\operatorname{dist}_{h_1}(x,z_0)<S_T/2\), and equality of the truncated distance functions makes \(z_0\) a closest boundary point for \(y\) at the same distance.
If this distance is zero, then \(x=y=z_0\).
If it is positive, the first variation formula shows that the minimizing geodesics from \(z_0\) to \(x\) and \(y\) both start in the unique inward unit normal direction.
The two points lie at the same parameter value on this normal geodesic and therefore coincide.
Since \(M_1\) is compact and \([0,S_T/2]^D\) is Hausdorff, \(\mathcal R_D\) is a topological embedding.
For \(z\in D\) and rational \(0<q<S_T/2\), equation \eqref{eq:weighted-borel-generation} gives
\[
\mathcal R_D^{-1}\!\left(
\{r\in[0,S_T/2]^D:r(z)<q\}
\right)
=
\{x\in M_1:\operatorname{dist}_{h_1}(x,z)<q\}
\in
\sigma(\mathfrak A_1).
\]
Because \(D\) is countable, these rational coordinate cylinders generate the Borel sigma-algebra of \([0,S_T/2]^D\).
The embedding property therefore gives \(\mathcal B(M_1)\subset\sigma(\mathfrak A_1)\).
The reverse inclusion holds because every domain of influence \(M_{h_1}(\Gamma,r)\) is open.
Consequently,
\begin{equation}
\sigma(\mathfrak A_1)
=
\mathcal B(M_1).
\label{eq:weighted-algebra-generates-borel}
\end{equation}

Define a finite Borel measure on \(M_1\) by
\[
\nu(E)=\mu_2(\Phi(E)),
\qquad
E\in\mathcal B(M_1).
\]
Equation \eqref{eq:weighted-measure-algebra} says that \(\nu\) and \(\mu_1\) agree on the algebra \(\mathfrak A_1\).
An algebra is a \(\pi\)-system, so the \(\pi\)--\(\lambda\) theorem and \eqref{eq:weighted-algebra-generates-borel} imply that \(\nu=\mu_1\) on every Borel set.
Equivalently,
\begin{equation}
\mu_1=\Phi^*\mu_2.
\label{eq:weighted-measure-pullback}
\end{equation}
Using \(\Phi^*h_2=h_1\), we have \(\Phi^*\mathrm{d}V_{h_2}=\mathrm{d}V_{h_1}\), and hence
\[
\varrho_1\,\mathrm{d}V_{h_1}
=
\mu_1
=
\Phi^*\mu_2
=
(\varrho_2\circ\Phi)\,\mathrm{d}V_{h_1}.
\]
Thus \(\varrho_1=\varrho_2\circ\Phi\) almost everywhere, and smoothness gives equality on all of \(M_1\).
This proves \eqref{eq:weighted-isometry-conclusion}.

The proof is complete.
\end{proof}

Thus \((M,h,\varrho)\) is reconstructed for every \(n\geq2\).
Section~\ref{sec:completion-main-theorem} uses \(n\geq3\) to recover \(a\), \(g\), \(c\), and \(\mathbf G_{g,c}\).

\section{Recovery of the original Lorentzian metric}
\label{sec:completion-main-theorem}

The recovered weighted geometry \((h,\varrho)\) still encodes the unknown pair \((g,a)\).
The relation \(\varrho=a^{n-2}\) separates these coefficients precisely when \(n\geq3\), and the recovered temporal factor \(b\) then determines the full time-dependent speed:
\[
a=\varrho^{1/(n-2)},
\qquad
g=a^2h,
\qquad
c=ab.
\]
We now combine this structural inversion with the temporal and spatial recovery established above to prove Theorem~\ref{thm:simultaneous-recovery}.

\begin{proof}[Proof of Theorem~\ref{thm:simultaneous-recovery}]
Let \(n\geq3\), let \(c_j=a_jb_j\in\mathcal C^T(M_j)\), \(j=1,2\), and let \(\varphi:\partial M_1\to\partial M_2\) be a boundary diffeomorphism.
Assume the data identity \eqref{eq:boundary-identification} and the visibility condition \eqref{eq:main-finite-visibility}.
Set
\[
h_j=a_j^{-2}g_j,
\qquad
\varrho_j=a_j^{n-2}.
\]
Applying Proposition~\ref{prop:boundary-data-correspondence} to \eqref{eq:boundary-identification}, and then Lemma~\ref{lem:weighted-stationary-response}, gives
\[
b_1=b_2=:b,
\qquad
S_T=\int_0^T b(t)\,\mathrm{d}t,
\]
together with the identity \eqref{eq:weighted-response-equality} for the boundary response densities.
Because \(b_1=b_2=b\), the two accumulated effective times in \eqref{eq:main-finite-visibility} are both equal to \(S_T\).
Hence the visibility hypothesis gives
\[
S_T>2\max\{\operatorname{rad}(h_1),\operatorname{rad}(h_2)\}.
\]
Proposition~\ref{prop:finite-time-spatial-recovery} now applies and yields a diffeomorphism \(\Phi:M_1\to M_2\) satisfying
\[
\Phi|_{\partial M_1}=\varphi,
\qquad
\Phi^*h_2=h_1,
\qquad
\Phi^*\varrho_2=\varrho_1.
\]
Since \(n\geq3\), positivity and the recovered weight give
\[
\Phi^*a_2
=
(\Phi^*\varrho_2)^{1/(n-2)}
=
\varrho_1^{1/(n-2)}
=
a_1.
\]
Using \(g_j=a_j^2h_j\), we then obtain
\[
\Phi^*g_2
=
(\Phi^*a_2)^2\Phi^*h_2
=
a_1^2h_1
=
g_1.
\]
For every \((x,t)\in M_1\times[0,T]\), the identities \(b_1=b_2\) and \(\Phi^*a_2=a_1\) give
\[
c_2(\Phi(x),t)
=
a_2(\Phi(x))b_2(t)
=
a_1(x)b_1(t)
=
c_1(x,t).
\]
For \(\widehat\Phi=\Phi\times\operatorname{Id}_{(0,T)}\), these identities give
\[
\widehat\Phi^*\mathbf G_2
=
-c_2(\Phi(x),t)^2\mathrm{d}t^2+\Phi^*g_2
=
-c_1(x,t)^2\mathrm{d}t^2+g_1
=
\mathbf G_1.
\]
This proves Theorem~\ref{thm:simultaneous-recovery}.
\end{proof}

\section{Optimality and scope of the main theorem}
\label{sec:consequences-sharpness}

This section proves Corollary~\ref{cor:euclidean-recovery-consequences} and Theorem~\ref{thm:sharpness-obstructions} through precise recovery and obstruction results within the admissible multiplicatively separable class.
The arguments separate the two-dimensional conformal gauge, recovery on a fixed Euclidean background, the uniform Euclidean observation criterion, and the endpoint ambiguity on unknown manifolds.

\subsection{The dimensional restriction: a conformal gauge in dimension two}

The dimensional restriction enters only when the original coefficients \((g,a)\) are separated from the recovered pair \((h,\varrho)\).
Since \(\varrho=a^{n-2}\), it determines \(a\) for \(n\geq3\) but becomes identically one in dimension two.
The analogous two-dimensional conformal ambiguity is classical for elliptic Dirichlet-to-Neumann data \cite{lassas2001determining}.
In the present wave setting, the conformal gauge
\[
g_2=\kappa^2g_1,
\qquad
c_2=\kappa c_1,
\qquad
\mathbf G_2=\kappa^2\mathbf G_1,
\]
realizes the same loss directly at the level of the original finite-time Dirichlet-to-Neumann map when \(\kappa=1\) on \(\partial M\).

\begin{theorem}
\label{thm:two-dimensional-conformal-obstruction}
Let \((M,g_1)\) be a smooth compact connected two-dimensional Riemannian manifold with smooth nonempty boundary.
Let \(a_1\in C^\infty(M)\) and \(b\in C^\infty([0,\infty))\) be strictly positive with \(b(0)=1\), and set \(c_1(x,t)=a_1(x)b(t)\).
For every positive \(\kappa\in C^\infty(M)\) satisfying \(\kappa|_{\partial M}=1\), define
\[
g_2=\kappa^2g_1,
\qquad
a_2=\kappa a_1,
\qquad
c_2=a_2b.
\]
Then the full boundary Dirichlet-to-Neumann maps satisfy
\[
\Lambda_{g_1,c_1}^T
=
\Lambda_{g_2,c_2}^T
\]
for every finite \(T>0\).
If, in addition, \(\kappa\geq1\) on \(M\) and \(\kappa>1\) on a nonempty open subset of the interior, then the two pairs \((g_1,c_1)\) and \((g_2,c_2)\) are not related by any boundary-fixing diffeomorphism.
\end{theorem}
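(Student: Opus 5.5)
The first claim follows by showing that the two wave equations coincide pointwise, together with their boundary data. In dimension two we have \(\Delta_{\kappa^2g_1}=\kappa^{-2}\Delta_{g_1}\), because \(\sqrt{\det g}\,g^{ij}\) is conformally invariant when \(n=2\). Hence
\[
c_2^{-2}\partial_t^2-\Delta_{g_2}
=
\kappa^{-2}\bigl(c_1^{-2}\partial_t^2-\Delta_{g_1}\bigr).
\]
A function \(u\) therefore solves \eqref{eq:main-forward-problem} for \((g_1,c_1)\) exactly when it solves it for \((g_2,c_2)\). The initial data and the Dirichlet data \(f\) are the same in both problems. By the uniqueness in Proposition~\ref{prop:forward-well-posedness-causality}, the two solutions coincide, \(u_1^f=u_2^f\), for every \(f\in\mathcal D_T\) and every finite \(T\); this is legitimate because \(b\) lives on \([0,\infty)\) and we may restrict to \([0,T]\). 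For the Neumann traces, \(\kappa|_{\partial M}=1\) gives \(g_2=g_1\) at boundary points, so \(\nu_{g_2}=\kappa^{-1}\nu_{g_1}=\nu_{g_1}\) on \(\partial M\). The two Neumann traces agree, and \(\Lambda_{g_1,c_1}^T=\Lambda_{g_2,c_2}^T\).

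For the non-equivalence claim, suppose \(\Phi:M\to M\) fixes \(\partial M\) pointwise and satisfies \(\Phi^*g_2=g_1\) and \(c_2(\Phi(x),t)=c_1(x,t)\). I would compare Riemannian areas. On one hand,
\[
\operatorname{Vol}_{g_2}(M)=\operatorname{Vol}_{g_1}(\Phi^{-1}(M))=\operatorname{Vol}_{g_1}(M).
\]
Here I need \(\Phi\) to be onto \(M\). A boundary-fixing diffeomorphism \(M\to M\) is by definition surjective, so this holds. On the other hand,
\[
\operatorname{Vol}_{g_2}(M)=\int_M\kappa^2\,\mathrm dV_{g_1}>\operatorname{Vol}_{g_1}(M),
\]
because \(\kappa\ge1\) everywhere and \(\kappa>1\) on a nonempty open set. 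This contradiction shows that no such \(\Phi\) exists. The speed condition is not even needed; equality of the metrics is already impossible.

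There is no real obstacle in this argument. The only points needing care are the two-dimensional conformal covariance of \(\Delta_g\), which is the step that fails for \(n\ge3\) because a first-order term \((n-2)\kappa^{-3}\langle\nabla\kappa,\nabla\cdot\rangle\) appears, and the boundary normal identification. The latter is where the hypothesis \(\kappa|_{\partial M}=1\) is used, and it also ensures the boundary metrics agree, consistent with Lemma~\ref{lem:boundary-symbol}.
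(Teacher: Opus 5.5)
Your proposal is correct and follows the paper's proof essentially step for step. Both use the two-dimensional identity \(\Delta_{\kappa^2g_1}=\kappa^{-2}\Delta_{g_1}\) and uniqueness to identify the solutions, use \(\kappa|_{\partial M}=1\) to match the normal derivatives, and compare \(\operatorname{Vol}_{g_2}(M)=\int_M\kappa^2\,\mathrm{d}V_{g_1}\) with \(\operatorname{Vol}_{g_1}(M)\) to rule out a boundary-fixing isometry.
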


\begin{proof}[Proof of Theorem~\ref{thm:two-dimensional-conformal-obstruction}]
Let \((M,g_1)\), \(c_1=a_1b\), and \(\kappa\) satisfy the hypotheses of the theorem, and define \(g_2\), \(a_2\), and \(c_2\) as in its statement.
Since \(M\) is two-dimensional, \(\Delta_{g_2}=\kappa^{-2}\Delta_{g_1}\); since \(c_2=\kappa c_1\), it follows that
\[
c_2^{-2}\partial_t^2-\Delta_{g_2}
=
\kappa^{-2}\bigl(c_1^{-2}\partial_t^2-\Delta_{g_1}\bigr).
\]
Fix a finite \(T>0\) and a boundary source \(f\).
By uniqueness, the two initial-boundary value problems have the same solution \(u^f\).
Moreover, \(\nu_{g_2}=\kappa^{-1}\nu_{g_1}\), and \(\kappa=1\) on \(\partial M\); hence
\[
\Lambda_{g_2,c_2}^Tf
=
\partial_{\nu_{g_2}}u^f
=
\partial_{\nu_{g_1}}u^f
=
\Lambda_{g_1,c_1}^Tf.
\]
Since \(f\) and \(T\) were arbitrary, the finite-time maps agree for every \(T>0\).

For the last claim, keep the manifold and the first model fixed, and choose \(\kappa\geq1\) so that \(\kappa=1\) on \(\partial M\) and \(\kappa>1\) on a nonempty interior open set.
Then
\[
\operatorname{Vol}_{g_2}(M)
=
\int_M\kappa^2\,\mathrm{d}V_{g_1}
>
\operatorname{Vol}_{g_1}(M).
\]
Any boundary-fixing diffeomorphism relating the two pairs would satisfy \(\Phi^*g_2=g_1\) and therefore force the two volumes to be equal, a contradiction.
Thus this choice gives a pair that is not related by any boundary-fixing diffeomorphism, proving the final assertion.
\end{proof}

\subsection{Removal of the geometric gauge on a fixed Euclidean background}
\label{subsec:fixed-euclidean-recovery}

The obstruction above varies \(g\) and \(a\) in concert.
On a prescribed Euclidean domain, neither this conformal ambiguity nor a nontrivial boundary-fixing change of coordinates is available; recovery therefore extends to \(n\geq2\), and equality in the visibility condition is sufficient because the Euclidean background is fixed.
Fix a smooth bounded connected domain \(\Omega\subset\mathbb R^n\), and let \(c_j\in\mathcal C^T(\overline\Omega)\) have normalized factors \(a_j,b_j\), \(j=1,2\).
Set
\[
h_a:=a^{-2}\mathrm{d}x^2,
\qquad
\operatorname{rad}(h_a)
=
\max_{x\in\overline\Omega}\operatorname{dist}_{h_a}(x,\partial\Omega),
\]
and let
\[
r_\Omega
=
\operatorname{rad}(\mathrm{d}x^2)
=
\max_{x\in\overline\Omega}
\operatorname{dist}_{\mathrm{d}x^2}(x,\partial\Omega)
\]
be the Euclidean inradius.
We abbreviate
\[
\Lambda_c^T=\Lambda_{\mathrm{d}x^2,c}^T.
\]
\begin{corollary}
\label{cor:fixed-euclidean-recovery}
Let \(T>0\) and \(n\geq2\), let \(\Omega\subset\mathbb R^n\) be a smooth bounded connected domain, and for \(j=1,2\) let
\[
c_j(x,t)=a_j(x)b_j(t)
\]
on \(\overline\Omega\times[0,T]\), where \(a_j\in C^\infty(\overline\Omega)\) and \(b_j\in C^\infty([0,T])\) are strictly positive and \(b_j(0)=1\).
Set \(h_j=a_j^{-2}\mathrm{d}x^2\).
Assume that
\begin{equation}
\int_0^T b_j(t)\,\mathrm{d}t
\geq
2\operatorname{rad}(h_j),
\qquad
j=1,2,
\label{eq:fixed-euclidean-visibility}
\end{equation}
and that the full boundary Dirichlet-to-Neumann maps agree:
\[
\Lambda_{c_1}^T=\Lambda_{c_2}^T.
\]
Then the two wave speeds agree throughout the observation cylinder:
\[
c_1=c_2
\quad\text{on }\overline\Omega\times[0,T].
\]
\end{corollary}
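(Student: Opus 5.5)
The plan is to run the argument of Theorem~\ref{thm:simultaneous-recovery} with \(\varphi=\operatorname{Id}_{\partial\Omega}\). The separation step \(a=\varrho^{1/(n-2)}\) is replaced by a rigidity argument that uses only the fixed Euclidean background. First, Proposition~\ref{prop:boundary-data-correspondence} and Lemma~\ref{lem:weighted-stationary-response} give \(b_1=b_2=:b\) and a common \(S_T\). They also give equality \eqref{eq:weighted-response-equality} of the weighted responses of \((\overline\Omega,h_j,\varrho_j)\), where \(h_j=a_j^{-2}\mathrm{d}x^2\) and \(\varrho_j=a_j^{n-2}\). Set \(R=S_T/2\). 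By \eqref{eq:fixed-euclidean-visibility}, \(\operatorname{rad}(h_j)\le R\) for both \(j\).

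Second, I would rerun Steps~1--3 of the proof of Proposition~\ref{prop:finite-time-spatial-recovery}, but only on the truncated sets
\[
\Omega_j^R=\{x:\operatorname{dist}_{h_j}(x,\partial\Omega)<R\}.
\]
These steps never used the strict inequality except to get \(M_j^R=M_j\). They give a smooth isometry \(\Phi^R:(\Omega_1^R,h_1)\to(\Omega_2^R,h_2)\) with \(\Phi^R|_{\partial\Omega}=\operatorname{Id}\). The complement \(\overline\Omega\setminus\Omega_j^R\) is contained in the level set \(\{\operatorname{dist}_{h_j}(\cdot,\partial\Omega)=R\}\), so it is null by Lemma~\ref{lem:distance-level-null}. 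Hence \(\Omega_j^R\) is open and of full measure, and therefore dense. Approximate controllability \eqref{eq:weighted-restricted-controllability} with \(\Gamma=\partial\Omega\) and \(r\uparrow R\) shows that the terminal waves are dense in \(L^2(\Omega_j^R,\mathrm{d}\mu_j)=L^2(\Omega,\mathrm{d}\mu_j)\). So the unitary \(U\) and Step~4 go through unchanged, giving \(\Phi^{R*}\varrho_2=\varrho_1\) on \(\Omega_1^R\). Each connected component of \(\Omega_1^R\) contains a collar of some component of \(\partial\Omega\), because a minimizing geodesic to the boundary stays inside \(\Omega_1^R\).

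Third, rigidity on each component \(V\) of \(\Omega_1^R\). The identity \(\Phi^{R*}h_2=h_1\) reads
\[
\Phi^{R*}\mathrm{d}x^2=\Bigl(\frac{a_2\circ\Phi^R}{a_1}\Bigr)^{2}\mathrm{d}x^2,
\]
so \(\Phi^R\) is a Euclidean conformal map fixing an open piece of \(\partial\Omega\) pointwise.
\begin{itemize}
\item If \(n\ge3\), the weight identity gives \(a_2\circ\Phi^R=a_1\). Then \(\Phi^R\) is a local Euclidean isometry, hence the restriction of a rigid motion on the connected set \(V\). A rigid motion fixing an open piece of a hypersurface pointwise, and mapping interior to interior, is the identity.
\item If \(n=2\), the map is orientation preserving near the fixed boundary arc, since it maps the inward side to the inward side. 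So it is holomorphic in complex notation, and \(\Phi^R(z)-z\) is holomorphic on \(V\) and vanishes on a boundary arc. By the identity theorem (after Schwarz reflection across the smooth arc, or by the boundary uniqueness theorem) it vanishes on \(V\).
\end{itemize}

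In either case \(\Phi^R=\operatorname{Id}\) on \(\Omega_1^R\), so \(a_1=a_2\) there. By density and continuity \(a_1=a_2\) on \(\overline\Omega\), and with \(b_1=b_2\) this gives \(c_1=c_2\). The main obstacle is the endpoint case \(S_T=2\operatorname{rad}(h_j)\). Here I must check carefully that the profile-set argument of Step~3 and the cited reconstruction of Katchalov--Kurylev--Lassas are genuinely only needed on \(M_j^R\). I must also check that the deepest points, where the distance equals \(R\), cause no trouble beyond being a null set with empty interior; the Euclidean rigidity above is what lets the identification pass to them by continuity.
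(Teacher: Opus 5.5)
Your proposal is correct and follows the paper's own route. The corollary is reduced via Proposition~\ref{prop:boundary-data-correspondence} to Proposition~\ref{prop:time-optimal-density}, whose endpoint case is exactly your argument: the truncated reconstruction of Lemma~\ref{lem:finite-depth-weighted-reconstruction} (nullity of the deepest level set giving density of the terminal waves), then the same Euclidean rigidity on each component (a rigid motion for \(n\geq3\), holomorphy plus boundary uniqueness for \(n=2\)), then density and continuity. The only difference is that the paper treats the strict case \(S_T>2\max_j\operatorname{rad}(h_j)\) separately, using the global \(\Phi\) from Proposition~\ref{prop:finite-time-spatial-recovery} and the fact that \(x^\ell\circ\Phi\) and \(x^\ell\) are both \(\Delta_{h_1,\varrho_1}\)-harmonic with equal Dirichlet data; your uniform truncated argument covers that case too.
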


For \(\tau>0\), write
\[
R_a^\tau:=R_{\mathrm{d}x^2,a}^\tau.
\]
For a relatively open set \(\Gamma\subset\partial\Omega\) and \(r>0\), write
\[
M_h(\Gamma,r)
:=
\{x\in\overline\Omega:\operatorname{dist}_h(x,\Gamma)<r\}.
\]

For a positive \(a\in C^\infty(\overline\Omega)\), the metric governing travel times and the weight remain
\[
h=a^{-2}\mathrm{d}x^2,
\qquad
\varrho=a^{n-2}.
\]
Then \eqref{eq:density-wave-equation} becomes
\[
\Delta_{h,\varrho}=a^2\Delta,
\qquad
\mathrm{d}\mu_{h,\varrho}=a^{-2}\mathrm{d}x.
\]

The equality case below uses the reconstruction on the region of boundary depth less than \(R\).
We isolate the precise conclusion needed at the endpoint.

\begin{lemma}
\label{lem:finite-depth-weighted-reconstruction}
Let \(n\geq2\), let \(a_1,a_2\in C^\infty(\overline\Omega)\) be positive, and let \(R>0\).
For \(j=1,2\), set
\[
h_j=a_j^{-2}\mathrm{d}x^2,
\qquad
\varrho_j=a_j^{n-2},
\qquad
\mathrm{d}\mu_j=\varrho_j\mathrm{d}V_{h_j}=a_j^{-2}\mathrm{d}x,
\]
and define
\[
M_j^R
=
\{x\in\overline\Omega:\operatorname{dist}_{h_j}(x,\partial\Omega)<R\}.
\]
Assume that
\[
R_{a_1}^{2R}=R_{a_2}^{2R}
\]
and that \(\overline\Omega\setminus M_j^R\) has \(\mu_j\)-measure zero for \(j=1,2\).
Then there is a boundary-fixing smooth isometry
\[
\Phi^R:(M_1^R,h_1)\longrightarrow(M_2^R,h_2)
\]
such that
\[
(\Phi^R)^*\varrho_2=\varrho_1
\quad\text{on }M_1^R.
\]
Moreover, for every open \(\Gamma\subset\partial\Omega\) and \(0<r<R\),
\[
\Phi^R\bigl(M_{h_1}(\Gamma,r)\bigr)
=
M_{h_2}(\Gamma,r).
\]
\end{lemma}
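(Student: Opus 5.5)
The plan is to rerun the proof of Proposition~\ref{prop:finite-time-spatial-recovery} with \(S_T=2R\), \(M_1=M_2=\overline\Omega\) and \(\varphi=\operatorname{Id}\), and to stop at the depth \(R\) instead of using \(M_j^R=M_j\). By Lemma~\ref{lem:weighted-stationary-response}, the hypothesis \(R_{a_1}^{2R}=R_{a_2}^{2R}\) together with the common boundary metric of \(\partial\Omega\) gives equality of the weighted response densities on \((0,2R)\). Step~1 of that proof (Blagovestchenskii identity and d'Alembert formula at \((R,R)\)) then yields a common terminal Gramian \(G\) on \(\mathscr C=C_c^\infty((0,R)\times\partial\Omega)\). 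Step~2 only used \(0<r<S_T/2=R\), so for every open \(\Gamma\) and \(0<r<R\) it gives
\[
\overline{\mathscr W_j(\Gamma,r)}=L^2(M_{h_j}(\Gamma,r),\mu_j).
\]

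The first point where the endpoint matters is density of the terminal waves, since \(r=R\) is not admissible. I will take \(\Gamma=\partial\Omega\) and \(r\uparrow R\). The increasing union of \(M_{h_j}(\partial\Omega,r)\) is \(M_j^R\), so the closure of all terminal waves is \(L^2(M_j^R,\mu_j)\). By the measure-zero hypothesis this equals \(H_j=L^2(\overline\Omega,\mu_j)\). Hence \(\mathcal V_1,\mathcal V_2\) extend to unitaries and \(U=\mathcal V_2\mathcal V_1^{-1}\) is defined, with the labeled subspace identities \eqref{eq:weighted-labelled-supports} for \(r<R\). The boundary-ball projection identity \eqref{eq:boundary-ball-projections}, the profile-matching argument giving \eqref{eq:equal-truncated-distance-images}, and the reconstruction from truncated boundary distances of \cite{katchalov2004energy} go through verbatim. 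They produce the smooth isometry \(\Phi^R=(\mathcal R_2^R)^{-1}\circ\mathcal R_1^R\) of \((M_1^R,h_1)\) onto \((M_2^R,h_2)\), which fixes \(\partial\Omega\) pointwise and maps \(M_{h_1}(\Gamma,r)\) onto \(M_{h_2}(\Gamma,r)\) as in \eqref{eq:finite-depth-domain-map}.

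For the weight I will follow Step~4. Formula \eqref{eq:weighted-constant-pairing} and density of the terminal waves in \(H_j\) give \(U1_1=1_2\). On the algebra \(\mathfrak A_1\) generated by the \(M_{h_1}(\Gamma,r)\) with \(r<R\), this gives \(U\Pi_AU^{-1}=\Pi_{\Phi^R(A)}\) and hence \(\mu_2(\Phi^R(A))=\mu_1(A)\). All generators lie in \(M_1^R\), so \(\mathfrak A_1\) should be read as an algebra on \(M_1^R\), with complements taken inside \(M_1^R\). This is legitimate because \(\Pi_{M_1^R}=I\) in \(H_1\) by the null-complement hypothesis, and likewise on side \(2\).

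The main obstacle is the Borel generation \(\sigma(\mathfrak A_1)=\mathcal B(M_1^R)\), because the injectivity argument for \(\mathcal R_D\) used compactness. Injectivity on \(M_1^R\) still holds, since every point has boundary distance \(<R\) and the normal-geodesic argument applies. Instead of compactness I will argue through the topology. The boundary balls with rational radii \(<R\) are in \(\sigma(\mathfrak A_1)\) by \eqref{eq:weighted-borel-generation}. The cells \eqref{eq:finite-distance-cell} form a base of \(M_1^R\) because \(\mathcal R_1^R\) is a homeomorphism onto its image by the cited reconstruction result. \(M_1^R\) is second countable, so every open set is a countable union of such cells, which gives \(\mathcal B(M_1^R)\subset\sigma(\mathfrak A_1)\). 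The \(\pi\)--\(\lambda\) theorem then gives \(\mu_1=(\Phi^R)^*\mu_2\) on \(M_1^R\). Since \((\Phi^R)^*h_2=h_1\), this yields \(\varrho_1=\varrho_2\circ\Phi^R\), first almost everywhere and then everywhere by smoothness.
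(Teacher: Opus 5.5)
Your proposal is correct and is essentially the paper's own proof: a common Gramian at time \(R\), density of terminal waves via \(r_k\uparrow R\) and the null-complement hypothesis, labeled projections, profile matching with the Katchalov--Kurylev--Lassas reconstruction, then \(U1_1=1_2\) and a \(\pi\)--\(\lambda\) argument on \(M_1^R\), with Borel generation taken from the homeomorphism property of \(\mathcal R_1^R\) instead of compactness. You are more explicit than the paper in two places, namely taking complements inside \(M_1^R\) and using second countability. One small point needs adding: passing from the sup-norm homeomorphism to a countable base of cells uses the \(1\)-Lipschitz dependence on \(z\), and near points with \(\mathcal R_1^R(x)(z)=R\) it uses the conditions \(\operatorname{dist}_{h_1}(\cdot,z)>R-\eta\), which are complements of closed balls and so also lie in \(\sigma(\mathfrak A_1)\).
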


\begin{proof}[Proof of Lemma~\ref{lem:finite-depth-weighted-reconstruction}]
Equations~\eqref{eq:density-wave-equation} and \eqref{eq:acoustic-flux-density} identify the scalar normal trace with the boundary response density.
Hence the two boundary response densities agree on \((0,2R)\).
The identity for terminal inner products in Step~1 of the proof of Proposition~\ref{prop:finite-time-spatial-recovery} therefore gives a common Gramian for waves at time \(R\).
Choose numbers \(r_k\uparrow R\).
For every \(k\), the restricted controllability statement \eqref{eq:weighted-restricted-controllability}, with \(\Gamma=\partial\Omega\) and \(r=r_k<R\), gives
\[
\overline{\mathscr W_j(\partial\Omega,r_k)}^{\,L^2(\mu_j)}
=
L^2(M_{h_j}(\partial\Omega,r_k),\mathrm{d}\mu_j).
\]
The sets on the right increase to \(M_j^R\), so their \(L^2\)-spaces have dense union in \(L^2(M_j^R,\mathrm{d}\mu_j)\).
The corresponding controls lie in \(C_c^\infty((0,R)\times\partial\Omega)\), while finite propagation places every terminal wave generated by such a control in \(L^2(M_j^R,\mathrm{d}\mu_j)\).
Consequently,
\[
\overline{
\{z_j^f(R):f\in C_c^\infty((0,R)\times\partial\Omega)\}
}^{\,L^2(\mu_j)}
=
L^2(M_j^R,\mathrm{d}\mu_j).
\]
The assumption that the complement is null identifies the space on the right with \(L^2(\Omega,\mathrm{d}\mu_j)\).
Consequently, the common terminal Gramian induces a unitary
\[
U:L^2(\Omega,\mathrm{d}\mu_1)\longrightarrow L^2(\Omega,\mathrm{d}\mu_2)
\]
that maps terminal waves generated by the same boundary source to one another.
For every open \(\Gamma\subset\partial\Omega\) and \(0<r<R\), restricted controllability then gives
\begin{equation}
U \Pi_{M_{h_1}(\Gamma,r)}U^{-1}
=
\Pi_{M_{h_2}(\Gamma,r)},
\label{eq:finite-depth-labelled-projections}
\end{equation}
where \(\Pi_A\) denotes multiplication by the characteristic function of \(A\).

We next pass from the labeled projections to the truncated boundary distance data.
For \(z\in\partial\Omega\) and \(0<s<R\), define
\[
\mathbb B_j(z,s)
=
\{x\in\overline\Omega:\operatorname{dist}_{h_j}(x,z)<s\}.
\]
Choose boundary neighborhoods \(\Gamma_m\) of \(z\) and numbers \(\epsilon_m\downarrow0\) such that \(\Gamma_m\) shrinks to \(z\) and
\[
\max_{j=1,2}\operatorname{diam}_{h_j}(\Gamma_m)<\frac{\epsilon_m}{2}.
\]
The triangle inequality gives
\[
\mathbb B_j(z,s)
=
\bigcup_{m:\,\epsilon_m<s}
M_{h_j}(\Gamma_m,s-\epsilon_m).
\]
Taking closed spans of the corresponding \(L^2\)-spaces and using \eqref{eq:finite-depth-labelled-projections} yields
\begin{equation}
U \Pi_{\mathbb B_1(z,s)}U^{-1}
=
\Pi_{\mathbb B_2(z,s)}.
\label{eq:finite-depth-ball-projections}
\end{equation}

Define the truncated boundary distance maps by
\[
\mathcal R_j^R(x)(z)
=
\min\{\operatorname{dist}_{h_j}(x,z),R\},
\qquad
x\in M_j^R.
\]
We prove that their images agree.
Fix \(x\in M_1^R\), set \(r_x=\mathcal R_1^R(x)\), and choose \(z_0\in\partial\Omega\) such that \(r_x(z_0)<R\).
Let \(\{z_\ell\}_{\ell=1}^\infty\) be dense in \(\partial\Omega\), with \(z_1=z_0\).
For each \(m\), choose \(0<\eta_m<1/m\) so small that
\[
r_x(z_0)+\eta_m<R
\]
and \(r_x(z_\ell)-\eta_m>0\), \(r_x(z_\ell)+\eta_m<R\) whenever \(1\leq\ell\leq m\) and \(0<r_x(z_\ell)<R\).
For brevity, write \(d_j(y,z)=\operatorname{dist}_{h_j}(y,z)\).
Define the finite distance cell
\[
\begin{aligned}
\mathcal E_{j,m}
={}&
\bigcap_{\substack{1\leq\ell\leq m\\ r_x(z_\ell)=0}}
\{y:d_j(y,z_\ell)<\eta_m\}
\\
&\cap
\bigcap_{\substack{1\leq\ell\leq m\\ 0<r_x(z_\ell)<R}}
\{y:|d_j(y,z_\ell)-r_x(z_\ell)|<\eta_m\}
\\
&\cap
\bigcap_{\substack{1\leq\ell\leq m\\ r_x(z_\ell)=R}}
\{y:d_j(y,z_\ell)>R-\eta_m\}.
\end{aligned}
\]
The projection onto \(L^2(\mathcal E_{j,m},\mathrm{d}\mu_j)\), modulo a null set, is a finite product of the ball projections in \eqref{eq:finite-depth-ball-projections} and their orthogonal complements.
Indeed, Lemma~\ref{lem:distance-level-null} shows that every positive distance sphere occurring at the boundary of the cell is null.
Conjugating this finite product by \(U\) and using \eqref{eq:finite-depth-ball-projections} shows that the two cell projections are simultaneously zero or nonzero.
The set \(\mathcal E_{1,m}\) contains \(x\) and is relatively open, so it has positive \(\mu_1\)-measure.
Thus \(\mathcal E_{2,m}\) is nonempty; choose \(x_m'\in\mathcal E_{2,m}\).
By compactness of \(\overline\Omega\), a subsequence converges to some \(x'\in\overline\Omega\).
The condition associated with \(z_0\) gives
\[
\operatorname{dist}_{h_2}(x',z_0)=r_x(z_0)<R,
\]
so \(x'\in M_2^R\).
For every fixed \(\ell\), the defining inequalities for \(\mathcal E_{2,m}\) and the limit \(\eta_m\to0\) give
\[
\mathcal R_2^R(x')(z_\ell)=r_x(z_\ell).
\]
Continuity in the boundary variable and density of \(\{z_\ell\}\) yield \(\mathcal R_2^R(x')=r_x\).
Interchanging the two metrics gives the reverse inclusion, and therefore
\begin{equation}
\mathcal R_1^R(M_1^R)
=
\mathcal R_2^R(M_2^R).
\label{eq:finite-depth-common-profile-set}
\end{equation}

The reconstruction in \cite[Theorem~6, the paragraphs surrounding formula~(74), and Lemma~6]{katchalov2004energy} shows that the truncated boundary distance map is a homeomorphism onto its image and that its evaluation functions determine the smooth structure and the metric.
Applying this result to the common image in \eqref{eq:finite-depth-common-profile-set} gives a boundary-fixing smooth isometry
\[
\Phi^R:(M_1^R,h_1)\longrightarrow(M_2^R,h_2).
\]
Indeed, the profile of \(z\in\partial\Omega\) has value zero at \(z\), so equality of profiles gives \(\Phi^R(z)=z\).
The labeled form of the construction and \eqref{eq:finite-depth-labelled-projections} give
\[
\Phi^R\bigl(M_{h_1}(\Gamma,r)\bigr)
=
M_{h_2}(\Gamma,r)
\]
for every relatively open \(\Gamma\subset\partial\Omega\) and \(0<r<R\).

It remains to recover the weight.
The identity for pairing with the constant function \eqref{eq:weighted-constant-pairing} and density of the terminal waves give
\[
U1_1=1_2,
\]
where \(1_j\) is the constant function one in \(L^2(\Omega,\mathrm{d}\mu_j)\).
Let \(\mathfrak A_R\) be the algebra generated by the sets \(M_{h_1}(\Gamma,r)\) with \(\Gamma\) open and \(0<r<R\).
Equation~\eqref{eq:finite-depth-labelled-projections} and finite Boolean operations imply
\[
U \Pi_AU^{-1}=\Pi_{\Phi^R(A)}
\quad\text{for every }A\in\mathfrak A_R.
\]
Hence
\[
\mu_2(\Phi^R(A))
=
\|\Pi_{\Phi^R(A)}1_2\|_{L^2(\mu_2)}^2
=
\|\Pi_A1_1\|_{L^2(\mu_1)}^2
=
\mu_1(A).
\]
The truncated boundary distance map is a homeomorphism and its evaluation functions separate points of \(M_1^R\).
Using a countable boundary basis and rational radii below \(R\), their rational sublevel sets belong to \(\sigma(\mathfrak A_R)\) and generate \(\mathcal B(M_1^R)\).
The \(\pi\)--\(\lambda\) theorem therefore extends the preceding measure identity to every Borel subset of \(M_1^R\).
Since \((\Phi^R)^*h_2=h_1\), comparison of
\[
\mathrm{d}\mu_j=\varrho_j\mathrm{d}V_{h_j}
\]
gives
\[
(\Phi^R)^*\varrho_2=\varrho_1
\quad\text{on }M_1^R.
\]

The proof is complete.
\end{proof}

\begin{proposition}
\label{prop:time-optimal-density}
Let \(S_T>0\) and \(n\geq2\), and let \(a_1,a_2\in C^\infty(\overline\Omega)\) be positive.
Set \(h_j=a_j^{-2}\mathrm{d}x^2\), \(j=1,2\).
If
\begin{equation}
R_{a_1}^{S_T}=R_{a_2}^{S_T},
\qquad
S_T\geq2\max\{\operatorname{rad}(h_1),\operatorname{rad}(h_2)\},
\label{eq:bc-global-hypothesis}
\end{equation}
then
\[
a_1=a_2
\quad\text{on }\overline\Omega.
\]
\end{proposition}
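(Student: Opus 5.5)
We apply Lemma~\ref{lem:finite-depth-weighted-reconstruction} with \(R=S_T/2\), show that the resulting isometry is the identity, and then read off \(a_1=a_2\) from \((\Phi^R)^*h_2=h_1\).

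\emph{Checking the hypotheses of Lemma~\ref{lem:finite-depth-weighted-reconstruction}.} With \(R=S_T/2\) we have \(2R=S_T\), so the first hypothesis is exactly \(R_{a_1}^{S_T}=R_{a_2}^{S_T}\). For the measure hypothesis, the assumption \(S_T\geq 2\operatorname{rad}(h_j)\) gives \(\operatorname{dist}_{h_j}(x,\partial\Omega)\leq R\) for every \(x\). Hence
\[
\overline\Omega\setminus M_j^R\subset\{x:\operatorname{dist}_{h_j}(x,\partial\Omega)=R\}.
\]
This level set is \(\mu_j\)-null by Lemma~\ref{lem:distance-level-null}, since \(R>0\) and \(\mu_j\) is a smooth positive weighted measure. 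The lemma therefore yields a boundary-fixing smooth isometry \(\Phi^R:(M_1^R,h_1)\to(M_2^R,h_2)\) with \((\Phi^R)^*\varrho_2=\varrho_1\). Both \(M_j^R\) are open and of full measure, hence dense in \(\overline\Omega\).

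\emph{Showing \(\Phi^R=\operatorname{Id}\).} The relation \((\Phi^R)^*h_2=h_1\) reads
\[
(\Phi^R)^*\mathrm{d}x^2=\Bigl(\frac{a_2\circ\Phi^R}{a_1}\Bigr)^2\mathrm{d}x^2,
\]
so \(\Phi^R\) is a Euclidean conformal map on \(M_1^R\) that fixes \(\partial\Omega\) pointwise.

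For \(n\geq3\) we argue directly. The identity \(\varrho=a^{n-2}\) gives \(a_2\circ\Phi^R=a_1\), so \(\Phi^R\) is a Euclidean isometry on each component of \(M_1^R\). It is therefore the restriction of a rigid motion, and a rigid motion fixing an open piece of the smooth hypersurface \(\partial\Omega\) is the identity.

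For \(n=2\) the weight carries no information, and I would use conformality instead. Every component \(C\) of \(M_1^R\) meets \(\partial\Omega\) along a boundary arc: a minimizing \(h_1\)-geodesic from a point of \(C\) to \(\partial\Omega\) has length \(<R\) and stays in \(M_1^R\). Near that arc, \(\Phi^R\) is smooth up to the boundary, equals the identity on the boundary, and sends inward normals to the interior. Hence its differential fixes the tangent direction and preserves the inward side, so \(\Phi^R\) is orientation preserving and thus holomorphic on \(C\). A holomorphic function equal to \(z\) on an arc equals \(z\) on the connected set \(C\) by the identity theorem. Alternatively, Liouville's theorem in the case \(n\geq3\) gives the same conclusion by a uniform argument.

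\emph{Conclusion.} With \(\Phi^R=\operatorname{Id}\), the identity \(h_1=h_2\) on \(M_1^R\) gives \(a_1^{-2}=a_2^{-2}\) there, so \(a_1=a_2\) on \(M_1^R\). Density of \(M_1^R\) and continuity extend this to all of \(\overline\Omega\).

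I expect the main obstacle to be the rigidity step \(\Phi^R=\operatorname{Id}\) in dimension two. The delicate points are justifying regularity of \(\Phi^R\) up to the boundary arc, which I would take from the boundary-fixing smooth construction in the cited reconstruction, and ruling out the orientation-reversing (antiholomorphic) case.
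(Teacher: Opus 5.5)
Your proof is correct up to two small points noted below. It is essentially the paper's endpoint argument, applied in both regimes at once.

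\textbf{Comparison with the paper.} The paper splits the proof into two cases.
\begin{itemize}
\item When $S_T=2\max\{\operatorname{rad}(h_1),\operatorname{rad}(h_2)\}$, it argues exactly as you do. It applies Lemma~\ref{lem:finite-depth-weighted-reconstruction}, shows every component of $M_1^R$ contains a boundary portion, uses rigid motions for $n\geq3$ and holomorphy for $n=2$, and concludes by density.
\item When $S_T>2\max\{\operatorname{rad}(h_1),\operatorname{rad}(h_2)\}$, it argues differently. It takes the global weighted isometry $\Phi:\overline\Omega\to\overline\Omega$ from Proposition~\ref{prop:finite-time-spatial-recovery}. Since $\Delta_{h_j,\varrho_j}x^\ell=a_j^2\Delta x^\ell=0$, the functions $x^\ell\circ\Phi$ and $x^\ell$ solve the same Dirichlet problem for $\Delta_{h_1,\varrho_1}$, so $\Phi=\operatorname{Id}$.
\end{itemize}
The harmonic-coordinate argument is dimension-free and needs no conformal geometry or orientation discussion. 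However, it requires $\Phi$ on all of $\overline\Omega$, which is why the paper falls back on local rigidity at the endpoint, where $\Phi^R$ lives only on a full-measure set.

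Your observation that $M_j^R=\overline\Omega$ in the strict case lets the finite-depth lemma apply verbatim, which makes the case split unnecessary. You also rightly skip the paper's step showing $\operatorname{rad}(h_1)=\operatorname{rad}(h_2)=R$ at the endpoint, since it is never used afterwards.

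\textbf{Two points to tighten.}
\begin{itemize}
\item \emph{The case $n\geq3$.} The claim that a rigid motion fixing an open piece of $\partial\Omega$ is the identity is false if that piece is flat, because reflection in the hyperplane also fixes it. You must use, as you already do for $n=2$, that $\Phi^R$ maps $M_1^R$ into $\overline\Omega$. Hence its differential at a boundary point fixes the inward normal as well as the tangent space. A rigid motion with a fixed point and identity differential there is the identity.
\item \emph{The case $n=2$.} The arc lies on the boundary of $C\cap\Omega$, not inside the domain of holomorphy, so the interior identity theorem does not apply as stated. You need a boundary version. One option is the paper's: since $d\Phi^R=I$ on the arc, each component of $\Phi^R-\operatorname{Id}$ is harmonic with vanishing Dirichlet and Neumann data there, and unique continuation for the Cauchy problem applies. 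Another is to extend $\Phi^R-z$ by zero across the arc and invoke Morera's theorem for continuous functions holomorphic off a smooth arc.
\end{itemize}
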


\begin{proof}
For \(j=1,2\), set \(\varrho_j=a_j^{n-2}\), and put
\[
R=\frac{S_T}{2},
\qquad
M_j^R
=
\{x\in\overline\Omega:\operatorname{dist}_{h_j}(x,\partial\Omega)<R\},
\qquad
C_j
=
\{x\in\overline\Omega:\operatorname{dist}_{h_j}(x,\partial\Omega)=R\}.
\]
We first treat the endpoint
\begin{equation}
R=\max\{\operatorname{rad}(h_1),\operatorname{rad}(h_2)\}.
\label{eq:bc-endpoint-case}
\end{equation}
Lemma~\ref{lem:distance-level-null} shows that \(C_j\) has zero Riemannian volume with respect to \(h_j\) for \(j=1,2\).
Since \(\varrho_j\) is smooth and positive, \(C_j\) is also null for the weighted measure in Lemma~\ref{lem:finite-depth-weighted-reconstruction}.
Moreover, \eqref{eq:bc-endpoint-case} gives \(\operatorname{rad}(h_j)\leq R\), and hence
\[
\overline\Omega\setminus M_j^R=C_j.
\]
Thus the measure zero hypothesis of Lemma~\ref{lem:finite-depth-weighted-reconstruction} is satisfied.
Since \(S_T=2R\), that lemma gives a boundary-fixing smooth isometry
\begin{equation}
\Phi^R:(M_1^R,h_1)\longrightarrow(M_2^R,h_2)
\label{eq:bc-endpoint-open-isometry}
\end{equation}
satisfying
\begin{equation}
(\Phi^R)^*\varrho_2=\varrho_1
\quad\text{on }M_1^R.
\label{eq:bc-endpoint-weight-identity}
\end{equation}

We next show that \(C_1\) and \(C_2\) are either both empty or both nonempty.
If \(C_1=\varnothing\), then \(M_1^R=\overline\Omega\) is compact.
If \(C_2\neq\varnothing\), then its nullity implies that it has empty interior, so \(M_2^R=\overline\Omega\setminus C_2\) is a proper nonclosed subset of the compact space \(\overline\Omega\).
Thus \(M_2^R\) is not compact, contradicting the homeomorphism \(\Phi^R:M_1^R\to M_2^R\).
Interchanging the indices gives the converse.
Condition~\eqref{eq:bc-endpoint-case} ensures that at least one \(C_j\) is nonempty.
Therefore both sets are nonempty and
\[
R=\operatorname{rad}(h_1)=\operatorname{rad}(h_2).
\]

Every connected component \(O\) of \(M_1^R\) contains an open portion of \(\partial\Omega\).
Indeed, compactness provides a minimizing geodesic for \(h_1\) from any \(x\in O\) to a closest boundary point.
It remains in \(M_1^R\), because the distance to \(\partial\Omega\) along the geodesic is strictly smaller than \(R\).
The component \(O\) therefore meets \(\partial\Omega\), and relative openness of \(M_1^R\) supplies an open boundary portion contained in \(O\).

If \(n\geq3\), equation~\eqref{eq:bc-endpoint-weight-identity} gives \(a_2\circ\Phi^R=a_1\).
Combining this identity with \((\Phi^R)^*h_2=h_1\) gives
\[
(\Phi^R)^*(\mathrm{d}x^2)=\mathrm{d}x^2.
\]
Thus \(\Phi^R|_O\) is the restriction of a Euclidean rigid motion.
On the open boundary portion contained in \(O\), its tangential differential is the identity.
Since \(\Phi^R\) maps the inward side of \(\Omega\) to itself, it also fixes the inward unit normal.
Hence its full differential is the identity at a boundary point, and the rigid motion is the identity on \(O\).

If \(n=2\), the metric identity makes \(\Phi^R|_O\) conformal with respect to the Euclidean metric.
Boundary fixing and preservation of the inward side make it orientation preserving.
On the open boundary portion contained in \(O\), its tangential derivative is the identity, and conformality together with preservation of the inward side gives the same conclusion for its normal derivative.
An orientation-preserving conformal map between planar domains is locally holomorphic, so each Euclidean coordinate of \(\Phi^R\) is harmonic.
Consequently, every component of \(\Phi^R-\operatorname{Id}\) is harmonic and has vanishing Dirichlet and Neumann data on that boundary portion.
Boundary unique continuation for the Laplacian, applied to each component of \(\Phi^R-\operatorname{Id}\), gives \(\Phi^R=\operatorname{Id}\) on \(O\).

In every dimension \(n\geq2\), the metric identity now yields
\[
a_1=a_2
\quad\text{on }M_1^R.
\]
The set \(M_1^R\) is dense in \(\overline\Omega\) because \(C_1\) is null.
Continuity therefore extends the equality to \(\overline\Omega\).
This proves the proposition in the endpoint case.

It remains to consider
\[
R>\max\{\operatorname{rad}(h_1),\operatorname{rad}(h_2)\}.
\]
Equivalently,
\[
S_T>2\max\{\operatorname{rad}(h_1),\operatorname{rad}(h_2)\}.
\]
For \(j=1,2\), equations~\eqref{eq:density-wave-equation} and \eqref{eq:acoustic-flux-density} give
\[
\mathcal B_{h_j,\varrho_j}^{S_T}f
=
(R_{a_j}^{S_T}f)\,\mathrm{d}S_{\mathrm{d}x^2}.
\]
Thus \(R_{a_1}^{S_T}=R_{a_2}^{S_T}\) yields \eqref{eq:weighted-response-equality} with \(M_1=M_2=\overline\Omega\) and \(\varphi=\operatorname{Id}_{\partial\Omega}\).
Proposition~\ref{prop:finite-time-spatial-recovery} therefore gives a diffeomorphism \(\Phi:\overline\Omega\to\overline\Omega\) such that
\begin{equation}
\Phi|_{\partial\Omega}=\operatorname{Id},
\qquad
\Phi^*h_2=h_1,
\qquad
\Phi^*\varrho_2=\varrho_1.
\label{eq:acoustic-weighted-isometry}
\end{equation}
For each \(\ell=1,\ldots,n\), one has
\[
\Delta_{h_j,\varrho_j}x^\ell
=
a_j^2\Delta x^\ell
=
0.
\]
The weighted isometry in \eqref{eq:acoustic-weighted-isometry} intertwines the two weighted Laplacians, so \(x^\ell\circ\Phi\) is harmonic with respect to \(\Delta_{h_1,\varrho_1}\).
Since \(\Phi\) fixes the boundary, \(x^\ell\circ\Phi\) and \(x^\ell\) have the same Dirichlet boundary values.
Uniqueness for the uniformly elliptic Dirichlet problem gives
\[
x^\ell\circ\Phi=x^\ell
\quad\text{in }\Omega.
\]
Thus \(\Phi=\operatorname{Id}\).
With \(\Phi=\operatorname{Id}\), the metric identity in \eqref{eq:acoustic-weighted-isometry} becomes
\[
a_2^{-2}\mathrm{d}x^2=a_1^{-2}\mathrm{d}x^2,
\]
and positivity yields \(a_1=a_2\).

The proof is complete.
\end{proof}

\begin{proof}[Proof of Corollary~\ref{cor:fixed-euclidean-recovery}]
Let \(c_j\) and their normalized factors \(a_j,b_j\), \(j=1,2\), satisfy the hypotheses, and suppose that
\[
\Lambda_{c_1}^T
=
\Lambda_{c_2}^T.
\]
Proposition~\ref{prop:boundary-data-correspondence} gives \(b_1=b_2=:b\) on \([0,T]\) and
\[
R_{a_1}^{S_T}
=
R_{a_2}^{S_T},
\qquad
S_T=\int_0^T b(t)\,\mathrm{d}t.
\]
The visibility assumptions give
\[
S_T
\geq
2\max\{\operatorname{rad}(h_{a_1}),\operatorname{rad}(h_{a_2})\}.
\]
The operators \(R_{a_j}^{S_T}\) therefore satisfy the hypotheses of Proposition~\ref{prop:time-optimal-density}.
Proposition~\ref{prop:time-optimal-density} gives \(a_1=a_2\) on \(\overline\Omega\).
Combining this equality with \(b_1=b_2=b\) gives \(c_1(x,t)=a_1(x)b(t)=a_2(x)b(t)=c_2(x,t)\) on \(\overline\Omega\times[0,T]\).
\end{proof}

\subsection{A uniform Euclidean visibility condition}

If \(a_j\geq a_*>0\), then
\[
\operatorname{rad}(h_{a_j})\leq \frac{r_\Omega}{a_*}.
\]
Hence the following uniform condition implies the visibility hypothesis for both models.

\begin{corollary}
\label{cor:uniform-finite-time}
Under the assumptions of Corollary~\ref{cor:fixed-euclidean-recovery}, assume in addition that \(a_j\geq a_*>0\) on \(\overline\Omega\).
If
\begin{equation}
\int_0^T b_j(t)\,\mathrm{d}t
\geq
\frac{2r_\Omega}{a_*},
\qquad
j=1,2,
\label{eq:uniform-time-condition}
\end{equation}
then equality of the two finite-time Dirichlet-to-Neumann maps implies
\[
c_1=c_2
\quad\text{on }\overline\Omega\times[0,T].
\]
\end{corollary}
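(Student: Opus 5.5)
The plan is to reduce Corollary~\ref{cor:uniform-finite-time} directly to Corollary~\ref{cor:fixed-euclidean-recovery}. Everything needed is already in place. The only new ingredient is the comparison of filling radii recorded just before the statement:
\[
\operatorname{rad}(h_{a_j})\le r_\Omega/a_*.
\]

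First I will verify this inequality carefully. Fix \(x\in\overline\Omega\) and let \(z\in\partial\Omega\) be a Euclidean-closest boundary point. The Euclidean segment \(\gamma\) from \(x\) to \(z\) lies in \(\overline\Omega\), since every interior point of the segment is closer to \(z\) than \(x\) is and hence stays inside the Euclidean ball of radius \(\operatorname{dist}(x,\partial\Omega)\) about \(x\), which is contained in \(\overline\Omega\). This segment is therefore an admissible curve for \(h_{a_j}=a_j^{-2}\mathrm{d}x^2\).

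Its \(h_{a_j}\)-length is \(\int_\gamma a_j^{-1}\,\mathrm{d}\ell\). Because \(a_j\ge a_*\), this is at most \(|x-z|/a_*\le r_\Omega/a_*\). Taking the maximum over \(x\) gives the claim.

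Next I combine this with \eqref{eq:uniform-time-condition}:
\[
\int_0^T b_j\,\mathrm{d}t\ \ge\ 2r_\Omega/a_*\ \ge\ 2\operatorname{rad}(h_j),\qquad j=1,2.
\]
This is exactly the (non-strict) visibility hypothesis \eqref{eq:fixed-euclidean-visibility}. The remaining hypotheses of Corollary~\ref{cor:fixed-euclidean-recovery} (smoothness, positivity, the normalization \(b_j(0)=1\), and equality of the maps \(\Lambda_{c_j}^T\)) are assumed verbatim. Invoking that corollary then yields \(c_1=c_2\) on \(\overline\Omega\times[0,T]\).

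No step here is genuinely hard, since the endpoint case has already been absorbed into Proposition~\ref{prop:time-optimal-density}. The only point needing care is the admissibility of the straight segment as a competitor curve inside \(\overline\Omega\), handled by the closest-point ball argument above. In particular, no convexity of \(\Omega\) is required.
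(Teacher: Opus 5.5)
Your proposal is correct and is the paper's own proof: bound \(\operatorname{rad}(h_{a_j})\le r_\Omega/a_*\) using the Euclidean segment to a closest boundary point, then invoke Corollary~\ref{cor:fixed-euclidean-recovery} through its non-strict visibility condition. The paper shows the segment stays in \(\overline\Omega\) by a first-exit-point argument, while you use containment in the ball of radius \(\operatorname{dist}(x,\partial\Omega)\) about \(x\); these are equivalent, although the operative fact is that points of the segment lie within that distance of \(x\), not that they are closer to \(z\) than \(x\) is.
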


\begin{proof}[Proof of Corollary~\ref{cor:uniform-finite-time}]
Fix \(x\in\Omega\), and choose \(y\in\partial\Omega\) with
\[
|x-y|=\operatorname{dist}_{\mathrm{d}x^2}(x,\partial\Omega).
\]
The segment from \(x\) to \(y\) lies in \(\overline\Omega\); otherwise, its first exit point would be a boundary point strictly closer to \(x\).
Since \(a_j\geq a_*\), the length of this segment with respect to \(h_{a_j}\) is at most
\[
\frac{\operatorname{dist}_{\mathrm{d}x^2}(x,\partial\Omega)}{a_*}.
\]
Consequently,
\begin{equation}
\operatorname{rad}(h_{a_j})
\leq
\frac{r_\Omega}{a_*},
\qquad
j=1,2.
\label{eq:uniform-radius-bound}
\end{equation}
Equations~\eqref{eq:uniform-time-condition} and \eqref{eq:uniform-radius-bound} give
\[
\int_0^T b_j(t)\,\mathrm{d}t
\geq
2\operatorname{rad}(h_{a_j}).
\]
Corollary~\ref{cor:fixed-euclidean-recovery} then yields \(c_1=c_2\) on \(\overline\Omega\times[0,T]\).
\end{proof}

\subsection{Sharpness of the uniform Euclidean observation scale}
\label{sec:causal-obstruction}

Continue in the fixed Euclidean setting and retain the abbreviations \(\Lambda_c^T\) and \(R_a^{S_T}\).
The boundary source must first reach an interior perturbation, and the resulting discrepancy must then return to the boundary before it can be measured.
We estimate these two travel times separately and then construct a perturbation that is invisible before their sum.
This proves the optimality of the constant \(2\) in \eqref{eq:uniform-time-condition} and yields distinct models whose Dirichlet-to-Neumann maps agree for every finite \(T>0\) when the total effective time is finite.

\subsubsection{Propagation to an interior perturbation and back}

Let \(\mathbb B(x,r)\) denote the open Euclidean ball with center \(x\) and radius \(r\).
Figure~\ref{fig:causal-roundtrip} illustrates the two successive stages of propagation.

\begin{figure}[htbp]
\centering
\includegraphics[width=0.72\textwidth]{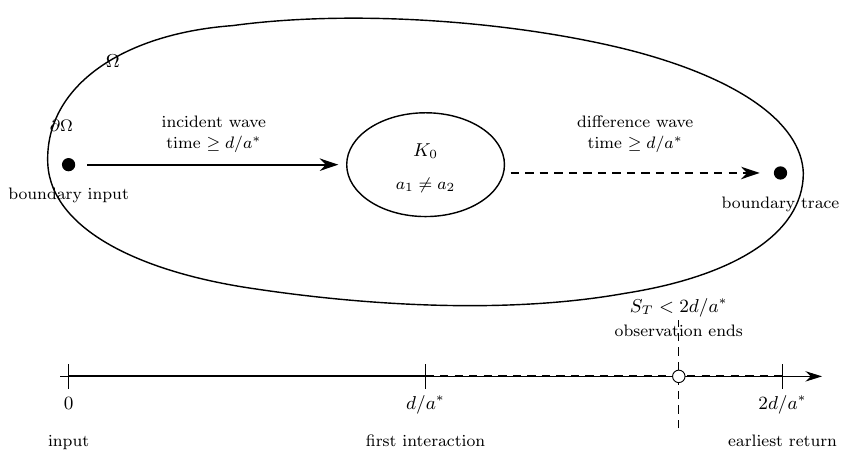}
\caption{The causal mechanism: a wave generated at the boundary must first reach the perturbed region, and the resulting discrepancy must then return to the boundary.
Under the bound \(a(x)\leq a^*\), each part requires at least \(d/a^*\), so the perturbation cannot affect the boundary response before \(2d/a^*\); in Proposition~\ref{prop:causal-obstruction}, \(a^*=\varepsilon\) and the accumulated effective time is \(S_T\).}
\label{fig:causal-roundtrip}
\end{figure}

The following lemma records the corresponding estimates under a common upper speed bound.

\begin{lemma}
\label{lem:finite-propagation}
Let \(S_T>0\), let \(a^*>0\), and let \(a\in C^\infty(\overline\Omega)\) satisfy
\begin{equation}
0<a(x)\leq a^*
\quad\text{on }\overline\Omega.
\label{eq:speed-upper-bound}
\end{equation}
\begin{enumerate}
\item[(a)]
Let \(f\in\mathcal D_{S_T}\), and let \(z\) solve \(z_{ss}-a(x)^2\Delta z=0\) on \(\Omega\times(0,S_T)\) with zero initial data and Dirichlet boundary value \(f\).
Then
\begin{equation}
z(x,s)=0
\quad\text{whenever}\quad
0<s<\min\left\{S_T,\frac{\operatorname{dist}_{\mathrm{d}x^2}(x,\partial\Omega)}{a^*}\right\}.
\label{eq:boundary-finite-propagation}
\end{equation}
\item[(b)]
Let \(K_0\Subset\Omega\), put \(d=\operatorname{dist}_{\mathrm{d}x^2}(K_0,\partial\Omega)>0\), and suppose that \(w\) has zero initial data, zero Dirichlet boundary data, and satisfies on \(\Omega\times(0,S_T)\)
\[
w_{ss}-a(x)^2\Delta w=F,
\qquad
\operatorname{supp}F
\subset
K_0\times\bigl([s_0,\infty)\cap(0,S_T)\bigr),
\]
where \(s_0\geq0\).
Then, for \(s_0\leq s<S_T\),
\begin{equation}
\operatorname{supp}w(\cdot,s)
\subset
\{x\in\overline\Omega:
\operatorname{dist}_{\mathrm{d}x^2}(x,K_0)
\leq
a^*(s-s_0)\}.
\label{eq:source-finite-propagation}
\end{equation}
In particular,
\begin{equation}
\partial_\nu w|_{\partial\Omega}=0
\quad\text{for}\quad
0<s<\min\left\{S_T,s_0+\frac{d}{a^*}\right\}.
\label{eq:return-time-bound}
\end{equation}
\end{enumerate}
\end{lemma}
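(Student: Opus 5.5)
Both parts will follow from a local energy estimate over truncated characteristic cones, i.e.\ domain-of-dependence estimates for \(\partial_s^2-a(x)^2\Delta\). The characteristic speed in the Euclidean metric is \(a(x)\leq a^*\). It is convenient to write the operator in divergence-compatible form: dividing by \(a^2\) gives \(a^{-2}z_{ss}-\Delta z=0\), with energy density
\[
e=\tfrac12\bigl(a^{-2}z_s^2+|\nabla z|^2\bigr)
\]
and energy flux \(-z_s\nabla z\). The spatial boundary in Proposition~\ref{prop:forward-well-posedness-causality} entered through the lifting \(F\). Here I will instead work directly with smooth solutions, which that proposition provides for \(f\in\mathcal D_{S_T}\). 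For part (b), the solution is obtained by the same Galerkin/energy argument with a smooth interior source.

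For (a), fix \(x_0\in\Omega\) and set \(\delta=\operatorname{dist}(x_0,\partial\Omega)\). Consider the shrinking cone
\[
C=\{(x,s):|x-x_0|<\delta-a^*s,\ 0\le s<\min\{S_T,\delta/a^*\}\}.
\]
Every time slice of \(C\) is a Euclidean ball contained in \(\Omega\), so the cone never meets \(\Omega\times\{\text{boundary}\}\) and the Dirichlet data \(f\) play no role. Let
\[
E(s)=\int_{|x-x_0|<\delta-a^*s}e\,dx .
\]
Differentiating, integrating by parts, and including the boundary term from the moving sphere gives
\[
E'(s)=\int_{\partial B}\Bigl(z_s\partial_r z-a^*e\Bigr)\,dS .
\]
Since \(a\le a^*\), Cauchy--Schwarz gives
\[
|z_s\partial_rz|\le a^*\cdot\tfrac12\bigl(a^{-2}z_s^2+|\nabla z|^2\bigr),
\]
because \(|z_s||\nabla z|\le \tfrac{a}{2}(a^{-2}z_s^2+|\nabla z|^2)\le \tfrac{a^*}{2}(\cdots)\). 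Hence \(E'\le0\). Together with \(E(0)=0\) this gives \(E\equiv0\), and zero initial data then yield \(z=0\) in \(C\). In particular \(z(x_0,s)=0\) for \(s<\min\{S_T,\delta/a^*\}\), which is \eqref{eq:boundary-finite-propagation}.

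For (b), the support claim is the contrapositive cone argument. Take \(x_0\) with \(\rho:=\operatorname{dist}(x_0,K_0)>a^*(s_1-s_0)\) for some \(s_1\), and consider the backward cone
\[
\{|x-x_0|<\rho-a^*(s-s_0)\}\cap\overline\Omega, \qquad s\in[s_0,s_1].
\]
This cone avoids \(K_0\), so \(F=0\) there. Moreover \(w\equiv0\) for \(s\le s_0\) by uniqueness (Proposition~\ref{prop:forward-well-posedness-causality}), since the forcing vanishes before \(s_0\).

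The main obstacle is that this cone may now intersect \(\partial\Omega\). To handle it, I run the same energy computation on the truncated region. The extra boundary term is
\[
\int_{\partial\Omega\cap B} z_s\,\partial_\nu w\,dS ,
\]
with \(w_s\) in place of \(z_s\), and it vanishes because \(w=0\) on \(\partial\Omega\) implies \(w_s=0\) there. The moving-sphere term is nonpositive exactly as in (a). Hence the local energy stays zero, \(w=0\) near \(x_0\) at time \(s_1\), and \eqref{eq:source-finite-propagation} follows. The computation needs enough regularity to justify the integration by parts on the Lipschitz region \(B\cap\Omega\); smoothness of \(w\) up to the boundary gives this, or one can approximate the sphere radius if needed.

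Finally, for \(s<s_0+d/a^*\) the set \(\{\operatorname{dist}(x,K_0)\le a^*(s-s_0)\}\) lies at positive distance from \(\partial\Omega\). Therefore \(w\) vanishes in a neighbourhood of \(\partial\Omega\) at each such time, and smoothness gives \(\partial_\nu w=0\) there, which is \eqref{eq:return-time-bound}.
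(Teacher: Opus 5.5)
Your proposal is correct and follows essentially the same route as the paper: a backward-cone energy estimate for $a^{-2}\partial_s^2-\Delta$. The flux through the moving sphere is nonpositive because $a\le a^*$, and the flux through $\partial\Omega$ vanishes since $w_s=0$ under zero Dirichlet data. In (b) you also use, as the paper does, that $w\equiv0$ for $s\le s_0$ by uniqueness. The difference in how the cone radii are parametrized (the paper writes $r+a^*(s_2-s)$, you write $\rho-a^*(s-s_0)$) is only cosmetic, and both arguments treat the regularity of the truncated region $\Omega\cap\mathbb B$ at the same level of detail.
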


\begin{proof}
We first establish an energy estimate on a backward cone that will be used for both parts.
Let \(u\) satisfy
\[
u_{ss}-a(x)^2\Delta u=G.
\]
After multiplication by \(a^{-2}\), this equation becomes
\[
a^{-2}u_{ss}-\Delta u=a^{-2}G.
\]
Define
\begin{equation}
e(u)
=
\frac12\bigl(a^{-2}|u_s|^2+|\nabla u|^2\bigr).
\label{eq:energy-density}
\end{equation}
Since \(a\) is independent of \(s\),
\begin{equation}
\partial_se(u)
=
\nabla\cdot(u_s\nabla u)
+
a^{-2}Gu_s.
\label{eq:local-energy-identity}
\end{equation}
Fix \(0\leq s_1<s_2<S_T\), \(x_0\in\Omega\), and \(r>0\), and consider the backward cone with sections
\[
D(s)
=
\Omega\cap
\mathbb B\bigl(x_0,r+a^*(s_2-s)\bigr),
\qquad
s_1\leq s\leq s_2.
\]
Set
\[
E(s)=\int_{D(s)}e(u)(x,s)\,\mathrm{d}x.
\]
The spherical part of \(\partial D(s)\) moves inward with normal velocity \(-a^*\), whereas the part contained in \(\partial\Omega\) is fixed.
The form of Reynolds' transport theorem for moving volumes \cite{lidstrom2011moving}, together with \eqref{eq:local-energy-identity} and the divergence theorem, gives
\begin{align}
E'(s)
={}&
\int_{\partial D(s)\cap\Omega}
\bigl(u_s\partial_n u-a^* e(u)\bigr)\,\mathrm{d}S
+
\int_{D(s)}a^{-2}Gu_s\,\mathrm{d}x
\notag\\
&+
\int_{\partial\Omega\cap \mathbb B(x_0,r+a^*(s_2-s))}
u_s\partial_\nu u\,\mathrm{d}S.
\label{eq:moving-energy}
\end{align}
The two boundary pieces meet along a set of codimension two, which contributes no surface integral; alternatively, one may obtain \eqref{eq:moving-energy} by a smooth approximation of \(D(s)\).

Suppose that the cone does not meet \(\operatorname{supp}G\), and that either it stays inside \(\Omega\) or \(u\) has zero Dirichlet data on the portion of \(\partial\Omega\) that it meets.
Then the volume source term vanishes.
In the second boundary case, \(u_s=0\) on \(\partial\Omega\), so the last term in \eqref{eq:moving-energy} also vanishes.
On the moving spherical part, \eqref{eq:speed-upper-bound} implies \(a^*a^{-2}\geq(a^*)^{-1}\), and hence
\begin{align*}
u_s\partial_n u-a^* e(u)
&\leq
|u_s||\nabla u|
-\frac{1}{2a^*}|u_s|^2
-\frac{a^*}{2}|\nabla u|^2\\
&\leq0.
\end{align*}
Consequently,
\begin{equation}
E(s_2)\leq E(s_1).
\label{eq:backward-cone-energy}
\end{equation}
This is the common cone estimate.
If \(u=u_s=0\) on \(D(s_1)\), then \(E(s_1)=0\).
Since \(e(u)\geq0\) and the same computation gives \(E'(s)\leq0\) throughout \([s_1,s_2]\), it follows that \(E(s)=0\) throughout the cone.
Moreover, \(D(s_2)\subset D(s)\) for \(s_1\leq s\leq s_2\).
Thus, for \(x\in D(s_2)\), one has \(u_s(x,s)=0\) along the vertical segment from \(s_1\) to \(s_2\), and hence
\[
u(x,s_2)
=
u(x,s_1)+\int_{s_1}^{s_2}u_s(x,s)\,\mathrm{d}s
=
0.
\]
Therefore, vanishing Cauchy data at the base of a backward cone that contains no source imply that \(u\) vanishes at its top.

We now apply this estimate to part~(a).
Fix \((x_0,s_2)\) such that
\[
a^*s_2<\operatorname{dist}_{\mathrm{d}x^2}(x_0,\partial\Omega).
\]
Choose \(r>0\) with
\[
r+a^*s_2<\operatorname{dist}_{\mathrm{d}x^2}(x_0,\partial\Omega).
\]
For \(u=z\), \(G=0\), and \(s_1=0\), the entire backward cone lies in the interior of \(\Omega\), and its base has zero Cauchy data.
The cone estimate gives \(z=0\) in \(\mathbb B(x_0,r)\) at time \(s_2\), and in particular \(z(x_0,s_2)=0\).
This proves \eqref{eq:boundary-finite-propagation}.

For part~(b), the assertion is immediate if \(s_0\geq S_T\), so assume \(s_0<S_T\).
Since \(F=0\) for \(0<s<s_0\) and \(w\) has zero initial and Dirichlet data, uniqueness gives
\[
w=0
\quad\text{for}\quad
0\leq s\leq s_0;
\]
in particular, \(w(\cdot,s_0)=w_s(\cdot,s_0)=0\).

Fix \(s_0\leq s_2<S_T\) and a point \(x_0\) satisfying
\[
\operatorname{dist}_{\mathrm{d}x^2}(x_0,K_0)
>
a^*(s_2-s_0).
\]
Choose \(r>0\) sufficiently small that
\[
\operatorname{dist}_{\mathrm{d}x^2}(\mathbb B(x_0,r),K_0)
>
a^*(s_2-s_0).
\]
Indeed, for \(s_0\leq s\leq s_2\), enlarging \(\mathbb B(x_0,r)\) by \(a^*(s_2-s)\) still leaves its distance from \(K_0\) greater than
\[
a^*(s_2-s_0)-a^*(s_2-s)
=
a^*(s-s_0)
\geq0.
\]
Thus the backward cone over \([s_0,s_2]\) avoids \(K_0\times[s_0,s_2]\), and hence avoids \(\operatorname{supp}F\).
Its base has zero Cauchy data, while its physical boundary portion has zero Dirichlet data.
Applying the cone estimate with \(u=w\), \(G=F\), and \(s_1=s_0\) yields \(w(x_0,s_2)=0\).
Since \(x_0\) was arbitrary, this proves \eqref{eq:source-finite-propagation}.

It remains only to extract the boundary consequence.
If \(s_0\leq s<\min\{S_T,s_0+d/a^*\}\), the set on the right-hand side of \eqref{eq:source-finite-propagation} has positive distance from \(\partial\Omega\).
More precisely, this distance is at least \(d-a^*(s-s_0)>0\).
Thus \(w(\cdot,s)\) vanishes in a full boundary collar and \(\partial_\nu w=0\) on \(\partial\Omega\).
Together with the already established vanishing for \(0<s<s_0\), this proves \eqref{eq:return-time-bound}.

The proof is complete.
\end{proof}

\subsubsection{Perturbations invisible before the return time}

The next proposition applies Lemma~\ref{lem:finite-propagation} to \eqref{eq:stationary-forward} and shows that a perturbation supported in \(K_0\Subset\Omega\) is invisible until the return time.

\begin{proposition}
\label{prop:causal-obstruction}
Let \(K_0\Subset\Omega\) be compact with nonempty interior, let \(b\in C^\infty([0,T])\) be positive with \(b(0)=1\), and set \(S_T=\int_0^T b(t)\,\mathrm{d}t\).
Fix \(\varepsilon>0\), choose
\[
0\neq\chi\in C_c^\infty(\operatorname{int}K_0),
\qquad
0\leq\chi\leq1,
\qquad
0<\delta<1,
\]
and define
\begin{equation}
a_1(x)=\varepsilon,
\qquad
a_2(x)=\varepsilon\bigl(1-\delta\chi(x)\bigr).
\label{eq:invisible-profiles}
\end{equation}
If
\[
S_T
\leq
\frac{2\operatorname{dist}_{\mathrm{d}x^2}(K_0,\partial\Omega)}{\varepsilon}
\]
then the wave speeds \(c_j=a_jb\) satisfy
\[
\Lambda_{c_1}^T=\Lambda_{c_2}^T.
\]
\end{proposition}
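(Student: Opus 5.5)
The plan is to reduce to the stationary responses and then use the two-stage finite propagation of Lemma~\ref{lem:finite-propagation}. Since both models share the same temporal factor \(b\), they share \(S_T\), \(q\), \(U_b^{S_T}\) and \(T_q^{S_T}\). By \eqref{eq:physical-from-stationary} in Proposition~\ref{prop:boundary-data-correspondence},
\[
\Lambda_{c_j}^T=(U_b^{S_T})^{-1}T_q^{S_T}R_{a_j}^{S_T}(T_q^{S_T})^{-1}U_b^{S_T},
\]
so it suffices to prove \(R_{a_1}^{S_T}=R_{a_2}^{S_T}\) on \(\mathcal D_{S_T}\).

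Fix \(f\in\mathcal D_{S_T}\) and let \(z_j\) solve \eqref{eq:stationary-forward} with speed \(a_j\). Set \(d=\operatorname{dist}_{\mathrm{d}x^2}(K_0,\partial\Omega)\). Both \(a_j\leq\varepsilon\), so we take \(a^*=\varepsilon\) throughout. Put \(w=z_2-z_1\), which has zero initial and Dirichlet data. We write the equation in nondivergence form:
\[
w_{ss}-a_2^2\Delta w=(a_2^2-a_1^2)\Delta z_1=:F.
\]
Since \(a_2=a_1\) outside \(\operatorname{supp}\chi\subset K_0\), the source \(F\) is supported in \(K_0\times(0,S_T)\).

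The first stage controls when the source can switch on. Part~(a) of Lemma~\ref{lem:finite-propagation}, applied to \(z_1\) with \(a^*=\varepsilon\), gives \(z_1=0\) near \(K_0\) for \(s<d/\varepsilon\). Because \(K_0\) is compact, every point of \(K_0\) lies at distance at least \(d\) from \(\partial\Omega\). So \(z_1\), and with it \(\Delta z_1\) and \(F\), vanishes on \(K_0\times(0,s_0)\) with \(s_0=d/\varepsilon\). To handle the endpoint, note that \(z_1\) vanishes on an open neighborhood of \(K_0\times(0,s_0)\) in spacetime. By continuity \(F=0\) up to \(s=s_0\), so \(\operatorname{supp}F\subset K_0\times([s_0,\infty)\cap(0,S_T))\).

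The second stage controls when the discrepancy can reach the boundary. Part~(b) of Lemma~\ref{lem:finite-propagation}, applied to \(w\) with speed \(a_2\leq\varepsilon\), gives \(\partial_\nu w=0\) on \(\partial\Omega\) for \(0<s<\min\{S_T,s_0+d/\varepsilon\}\). Here \(s_0+d/\varepsilon=2d/\varepsilon\geq S_T\), so \(\partial_\nu w=0\) on all of \(\partial\Omega\times(0,S_T)\), which is \(R_{a_1}^{S_T}f=R_{a_2}^{S_T}f\).

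The main point of care is the endpoint \(S_T=2d/\varepsilon\). Lemma~\ref{lem:finite-propagation}(b) is stated with a strict inequality and gives vanishing only on the open interval \((0,\min\{S_T,2d/\varepsilon\})\). That interval equals \((0,S_T)\), which is exactly the observation interval, so no extra argument is needed. I would only check that the support condition for \(F\) holds at \(s_0\) itself; the continuity argument in the first stage covers this.
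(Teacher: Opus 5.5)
Your proposal is correct and follows the paper's proof step for step. The paper also reduces to \(R_{a_1}^{S_T}=R_{a_2}^{S_T}\) via \eqref{eq:physical-from-stationary}, introduces the source \(F=(a_2^2-a_1^2)\Delta z_1\), which Lemma~\ref{lem:finite-propagation}(a) switches on only at \(s_0=d/\varepsilon\), and applies Lemma~\ref{lem:finite-propagation}(b) with \(a^*=\varepsilon\); it handles the endpoint \(S_T=2d/\varepsilon\) exactly as you do, by noting that \((0,\min\{S_T,2d/\varepsilon\})\) is already the full observation interval \((0,S_T)\).
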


\begin{proof}
Set
\begin{equation}
d
=
\operatorname{dist}_{\mathrm{d}x^2}(K_0,\partial\Omega)
>0.
\label{eq:perturbation-depth}
\end{equation}
Write
\begin{equation}
S_T=\int_0^T b(t)\,\mathrm{d}t.
\label{eq:counterexample-effective-time}
\end{equation}
The hypothesis, including the endpoint, is
\begin{equation}
S_T\leq\frac{2d}{\varepsilon}.
\label{eq:counterexample-threshold}
\end{equation}
The profiles in \eqref{eq:invisible-profiles} satisfy
\begin{equation}
0<a_2\leq a_1,
\qquad
a_2\not\equiv a_1,
\qquad
\operatorname{supp}(a_2-a_1)\subset K_0.
\label{eq:invisible-profile-properties}
\end{equation}
In particular, \(a_1=a_2\) in a neighborhood of \(\partial\Omega\).

To prove that this perturbation is invisible at the boundary before the return time, let \(f\) be admissible smooth boundary data on \(\partial\Omega\times(0,S_T)\), and let \(z_j\) solve
\begin{equation}
\begin{cases}
(z_j)_{ss}-a_j(x)^2\Delta z_j=0
&\text{in }\Omega\times(0,S_T),\\
z_j|_{s=0}=(z_j)_s|_{s=0}=0
&\text{in }\Omega,\\
z_j=f
&\text{on }\partial\Omega\times(0,S_T).
\end{cases}
\label{eq:counterexample-stationary-solutions}
\end{equation}
Since \(a_1\equiv\varepsilon\), Lemma~\ref{lem:finite-propagation}(a) gives
\begin{equation}
z_1=0
\quad\text{on}\quad
K_0\times\left(0,\frac d\varepsilon\right).
\label{eq:baseline-not-reached}
\end{equation}
For each point in this set the vanishing holds in a spacetime neighborhood, and hence
\begin{equation}
\Delta z_1=0
\quad\text{on}\quad
K_0\times\left(0,\frac d\varepsilon\right).
\label{eq:baseline-laplacian-zero}
\end{equation}

Set \(w=z_2-z_1\).
Then \(w\) has zero initial and Dirichlet boundary data, and
\begin{align}
w_{ss}-a_2^2\Delta w
&=
(a_2^2-a_1^2)\Delta z_1
\notag\\
&=:F.
\label{eq:difference-source}
\end{align}
The support property in \eqref{eq:invisible-profile-properties} and \eqref{eq:baseline-laplacian-zero} imply, relative to the finite cylinder \(\Omega\times(0,S_T)\),
\begin{equation}
\operatorname{supp}F
\subset
K_0\times
\left(
\left[\frac d\varepsilon,\infty\right)
\cap(0,S_T)
\right).
\label{eq:difference-source-support}
\end{equation}
Moreover, \(a_2\leq\varepsilon\).
Applying Lemma~\ref{lem:finite-propagation}(b) with \(a^*=\varepsilon\) and \(s_0=d/\varepsilon\) gives
\begin{equation}
\partial_\nu w|_{\partial\Omega}=0
\quad\text{for}\quad
0<s<\min\left\{S_T,\frac{2d}{\varepsilon}\right\}.
\label{eq:no-return-before-roundtrip}
\end{equation}
Condition \eqref{eq:counterexample-threshold} therefore yields
\[
\partial_\nu z_1=\partial_\nu z_2
\quad\text{on }\partial\Omega\times(0,S_T).
\]
Since \(f\) was arbitrary,
\begin{equation}
R_{a_1}^{S_T}=R_{a_2}^{S_T}.
\label{eq:equal-invisible-stationary}
\end{equation}

To transfer \eqref{eq:equal-invisible-stationary} to the measured responses, set \(c_j=a_jb\).
Since both wave speeds have temporal factor \(b\), they share \(B\), \(\beta\), \(U_b^{S_T}\), \(q\), and \(T_q^{S_T}\).
The reverse operator identity \eqref{eq:physical-from-stationary} gives
\[
\Lambda_{c_j}^T
=
(U_b^{S_T})^{-1}
T_q^{S_T}R_{a_j}^{S_T}
(T_q^{S_T})^{-1}
U_b^{S_T}.
\]
Substitution of \eqref{eq:equal-invisible-stationary} into the right-hand side gives
\begin{equation}
\Lambda_{c_1}^T=\Lambda_{c_2}^T.
\label{eq:equal-invisible-physical}
\end{equation}

The proof is complete.
\end{proof}

\subsubsection{Optimality of the factor \(2\)}

Placing the support near a point of maximal Euclidean depth turns Proposition~\ref{prop:causal-obstruction} into the following optimality result: no \(C<2\) can replace the uniform factor \(2\).

\begin{theorem}
\label{thm:causal-obstruction}
Let \(n\geq2\), let \(\Omega\subset\mathbb R^n\) be a smooth bounded connected domain, and write
\[
r_\Omega
=
\max_{x\in\overline\Omega}
\operatorname{dist}_{\mathrm{d}x^2}(x,\partial\Omega).
\]
For every \(0<C<2\) and \(T>0\), there exist a number \(a_*>0\), strictly positive functions \(a_1,a_2\in C^\infty(\overline\Omega)\), and a strictly positive nonconstant function \(b\in C^\infty([0,T])\) with \(b(0)=1\) such that the wave speeds
\[
c_j(x,t)=a_j(x)b(t),
\qquad
j=1,2,
\]
are distinct and satisfy
\[
a_j\geq a_*
\quad\text{on }\overline\Omega,
\qquad
j=1,2,
\]
and
\begin{equation}
\int_0^T b(t)\,\mathrm{d}t
>
\frac{C r_\Omega}{a_*},
\label{eq:subcritical-uniform-constant}
\end{equation}
but
\[
\Lambda_{c_1}^T=\Lambda_{c_2}^T.
\]
Consequently, no universal constant smaller than \(2\) can replace \(2\) in the Euclidean observation condition based on \(r_\Omega\) and a common lower bound for the spatial factors.
\end{theorem}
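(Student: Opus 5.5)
The plan is to apply Proposition~\ref{prop:causal-obstruction} with a compact set \(K_0\) placed near a point of maximal Euclidean depth, and to choose the constant background speed \(\varepsilon\), the lower bound \(a_*\) and the temporal factor \(b\) so that the effective time lies strictly between \(Cr_\Omega/a_*\) and \(2d/\varepsilon\).

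First fix \(x_0\in\Omega\) with \(\operatorname{dist}_{\mathrm{d}x^2}(x_0,\partial\Omega)=r_\Omega\). For small \(\rho>0\) let \(K_0=\overline{\mathbb B(x_0,\rho)}\). Then \(K_0\Subset\Omega\) has nonempty interior, and \(d=\operatorname{dist}(K_0,\partial\Omega)\geq r_\Omega-\rho\).

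Next take \(\varepsilon=1\), \(a_1=1\), and \(a_2=1-\delta\chi\) with \(\chi\) as in Proposition~\ref{prop:causal-obstruction}. Put \(a_*=1-\delta\), so that \(a_j\geq a_*\). Since \(C<2\), we can choose \(\rho\) and \(\delta\) so small that
\[
\frac{Cr_\Omega}{1-\delta}<2(r_\Omega-\rho)\leq 2d .
\]
Then pick a target \(S\) in the open interval \(\bigl(Cr_\Omega/a_*,\,2(r_\Omega-\rho)\bigr)\).

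It remains to construct a strictly positive, nonconstant \(b\in C^\infty([0,T])\) with \(b(0)=1\) and \(\int_0^Tb=S\). Take \(b=1+\lambda\psi\), where \(\psi\in C_c^\infty((0,T))\), \(\psi\ge0\), \(\int\psi=1\). This gives \(\int_0^Tb=T+\lambda\). Choose \(\lambda=S-T\) whenever \(S\neq T\). Positivity needs \(1+\lambda\psi>0\), which fails for large negative \(\lambda\). In that case use a general positive profile instead: let \(b=e^{\phi}\) with \(\phi(0)=0\), \(\phi\) nonconstant, and \(b\) small on most of \([0,T]\). By continuity of the integral in a parameter, any value in \((0,\infty)\) is attained. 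If \(S=T\), add a nonconstant mean-zero bump of small amplitude.

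Finally, the hypothesis \(S_T=S\leq 2d/\varepsilon\) of Proposition~\ref{prop:causal-obstruction} holds, so \(\Lambda_{c_1}^T=\Lambda_{c_2}^T\). Also \(\int_0^T b=S>Cr_\Omega/a_*\). The speeds are distinct because \(a_1\neq a_2\) on the support of \(\chi\) and \(b>0\).

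The final sentence of the theorem follows by contradiction. If some universal constant \(C<2\) worked in Corollary~\ref{cor:uniform-finite-time}, this construction would give distinct speeds with equal data, which is impossible.

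The only delicate step is the calibration of \(b\): it must be positive and nonconstant and satisfy \(b(0)=1\), while having an arbitrary prescribed integral for an arbitrary \(T\). The family \(b_\theta(t)=\exp(\theta\,\eta(t))\) with \(\eta(0)=0\) and \(\eta\) nonconstant handles this. Since \(\int b_\theta\) is continuous in \(\theta\), and tends to \(0\) and \(\infty\) as \(\theta\to\mp\infty\) when \(\eta>0\) on most of \((0,T]\), the intermediate value theorem gives the required \(\theta\).
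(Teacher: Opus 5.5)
Your proof is correct and follows the paper's route: a small ball $K_0$ around a point of maximal depth, the perturbation $a_2=\varepsilon(1-\delta\chi)$ with $a_*=\varepsilon(1-\delta)$, and an application of Proposition~\ref{prop:causal-obstruction}. The only difference is which parameter does the calibration: you fix $\varepsilon=1$ and tune $b$, whereas the paper takes an arbitrary nonconstant $b$ and chooses $\varepsilon\in\bigl(Cr_\Omega/(S_T(1-\delta)),\,2d/S_T\bigr)$, so your ``delicate step'' is unnecessary (and if you keep it, take $\eta(t)=t$: positivity of $\eta$ on only ``most'' of $(0,T]$ does not force $\int_0^Tb_\theta\to0$ as $\theta\to-\infty$).
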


\begin{proof}[Proof of Theorem~\ref{thm:causal-obstruction}]
Choose \(x_0\in\Omega\) with
\[
\operatorname{dist}_{\mathrm{d}x^2}(x_0,\partial\Omega)
=
r_\Omega.
\]
Since \(C<2\), choose \(0<r<r_\Omega\) and \(0<\delta<1\) sufficiently small that
\begin{equation}
\frac{C}{1-\delta}
<
2\left(1-\frac r{r_\Omega}\right).
\label{eq:sharp-parameter-choice}
\end{equation}
Set
\[
K_0=\overline{\mathbb B(x_0,r)}\Subset\Omega.
\]
Then
\[
d
:=
\operatorname{dist}_{\mathrm{d}x^2}(K_0,\partial\Omega)
=
r_\Omega-r.
\]
Choose a smooth positive nonconstant temporal factor \(b\in C^\infty([0,T])\) with \(b(0)=1\), and write
\[
S_T=\int_0^T b(t)\,\mathrm{d}t.
\]
Since \(d=r_\Omega-r\), condition \eqref{eq:sharp-parameter-choice} is exactly
\[
\frac{C r_\Omega}{S_T(1-\delta)}
<
\frac{2d}{S_T}.
\]
We may therefore choose
\[
\frac{C r_\Omega}{S_T(1-\delta)}
<
\varepsilon
<
\frac{2d}{S_T}
\]
and take a nonzero \(\chi\in C_c^\infty(\operatorname{int}K_0)\) with \(0\leq\chi\leq1\).
Define
\[
a_1=\varepsilon,
\qquad
a_2=\varepsilon(1-\delta\chi),
\qquad
a_*=\varepsilon(1-\delta).
\]
Set \(c_j=a_jb\).
Since \(0\leq\chi\leq1\) and \(\chi\not\equiv0\), we have \(a_j\geq a_*>0\) and \(c_1\not\equiv c_2\).
The lower bound on \(\varepsilon\) gives
\[
S_T
>
\frac{C r_\Omega}{\varepsilon(1-\delta)}
=
\frac{C r_\Omega}{a_*},
\]
which is \eqref{eq:subcritical-uniform-constant}.
The upper bound on \(\varepsilon\) gives
\[
S_T<\frac{2d}{\varepsilon}.
\]
Moreover, \(a_1=\varepsilon\), \(0<a_2\leq\varepsilon\), and \(a_1-a_2\) is supported in \(K_0\).
Proposition~\ref{prop:causal-obstruction} therefore gives
\[
\Lambda_{c_1}^T=\Lambda_{c_2}^T.
\]
Thus every \(C<2\) fails as a universal replacement for the constant \(2\).

The proof is complete.
\end{proof}

\subsubsection{Infinite physical time with finite total effective time}

Letting the physical horizon tend to infinity while keeping the total effective time below the time required to reach the perturbation and return to the boundary gives the following limiting obstruction.

\begin{corollary}
\label{cor:infinite-time-threshold}
Let \(n\geq2\), and let \(\Omega\subset\mathbb R^n\) be a smooth bounded connected domain.
There exist strictly positive functions \(a_1,a_2\in C^\infty(\overline\Omega)\) and \(b\in C^\infty([0,\infty))\), with \(b(0)=1\), such that the wave speeds
\[
c_j(x,t)=a_j(x)b(t),
\qquad
j=1,2,
\]
are distinct and satisfy
\[
\Lambda_{c_1}^T
=
\Lambda_{c_2}^T
\qquad
\text{for every }T>0.
\]
The temporal factor can be chosen so that
\[
\int_0^\infty b(t)\,\mathrm{d}t<\infty.
\]
The spatial factors can simultaneously be chosen to satisfy
\[
a_1=a_2\text{ near }\partial\Omega,
\qquad
0<a_2\leq a_1\text{ on }\overline\Omega,
\qquad
a_1\not\equiv a_2.
\]
One may take \(b(t)=e^{-Mt}\) with \(M\) sufficiently large.
\end{corollary}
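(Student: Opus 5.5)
We would take \(b(t)=e^{-Mt}\) and reduce the statement to Proposition~\ref{prop:causal-obstruction} applied separately for each finite \(T\). The key observation is that the total effective time is finite and uniformly controlled:
\[
S_T=\int_0^T e^{-Mt}\,\mathrm{d}t=\frac{1-e^{-MT}}{M}<\frac1M
\qquad\text{for every }T>0.
\]
Moreover, \(b\) is smooth, strictly positive, and \(b(0)=1\). Its restriction to \([0,T]\) is therefore an admissible temporal factor for every finite \(T\), and \(\int_0^\infty b\,\mathrm{d}t=1/M<\infty\).

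For the spatial construction, we fix a compact set \(K_0\Subset\Omega\) with nonempty interior, for instance a closed Euclidean ball around an interior point, and set \(d=\operatorname{dist}_{\mathrm{d}x^2}(K_0,\partial\Omega)>0\). We fix \(\varepsilon=1\), \(0<\delta<1\), and a nonzero \(\chi\in C_c^\infty(\operatorname{int}K_0)\) with \(0\leq\chi\leq1\). We then define \(a_1=\varepsilon\) and \(a_2=\varepsilon(1-\delta\chi)\) as in \eqref{eq:invisible-profiles}. These profiles satisfy all the listed properties:
\begin{itemize}
\item[(i)] \(a_1=a_2\) outside \(\operatorname{supp}\chi\), hence near \(\partial\Omega\);
\item[(ii)] \(0<a_2\leq a_1\);
\item[(iii)] \(a_1\not\equiv a_2\) because \(\chi\neq0\).
\end{itemize}
Since \(a_1\not\equiv a_2\) and \(b>0\), the speeds \(c_1\) and \(c_2\) are distinct.

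We then choose \(M\geq\varepsilon/(2d)\), so that \(1/M\leq 2d/\varepsilon\). For each finite \(T>0\), this gives \(S_T<1/M\leq 2d/\varepsilon\). This is exactly the hypothesis of Proposition~\ref{prop:causal-obstruction}, applied with \(b|_{[0,T]}\). That proposition yields \(\Lambda_{c_1}^T=\Lambda_{c_2}^T\). Since \(T\) was arbitrary, the identity holds for all \(T>0\). The choice of \(a_j\) does not depend on \(T\), which is the point of the argument.

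There is no real obstacle here. The only step requiring care is that a single pair \((a_1,a_2)\) and a single \(b\) must work for all \(T\) at once. The uniform bound \(S_T<1/M\) provides this, since the causal threshold \(2d/\varepsilon\) depends only on the spatial data. We would also note that the transfer in Proposition~\ref{prop:causal-obstruction} from \(R_{a_j}^{S_T}\) to \(\Lambda_{c_j}^T\) uses only smoothness and positivity of \(b\) on \([0,T]\), and \(b=e^{-Mt}\) has both.
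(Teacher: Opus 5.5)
Your proposal is correct and follows essentially the same route as the paper. You fix \(K_0\), \(\chi\), \(\delta\), set \(a_1=\varepsilon\) and \(a_2=\varepsilon(1-\delta\chi)\), take \(b(t)=e^{-Mt}\) with \(1/M\leq 2d/\varepsilon\), and then apply Proposition~\ref{prop:causal-obstruction} for each finite \(T\) via the uniform bound \(S_T<1/M\); the only cosmetic difference is that you normalize \(\varepsilon=1\), while the paper keeps \(\varepsilon>0\) arbitrary.
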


\begin{proof}[Proof of Corollary~\ref{cor:infinite-time-threshold}]
Choose a compact set \(K_0\Subset\Omega\) with nonempty interior and set
\[
d=\operatorname{dist}_{\mathrm{d}x^2}(K_0,\partial\Omega)>0.
\]
Fix \(\varepsilon>0\), choose a nonzero \(\chi\in C_c^\infty(\operatorname{int}K_0)\) with \(0\leq\chi\leq1\), and fix \(0<\delta<1\).
Define
\[
a_1=\varepsilon,
\qquad
a_2=\varepsilon(1-\delta\chi).
\]
Choose \(M>0\) so that \(1/M\leq2d/\varepsilon\), and set \(b(t)=e^{-Mt}\) and \(c_j(x,t)=a_j(x)b(t)\).
For every finite \(T>0\),
\[
S_T
=
\int_0^T b(t)\,\mathrm{d}t
=
\frac{1-e^{-MT}}{M}
<
\frac1M
\leq
\frac{2d}{\varepsilon}.
\]
Proposition~\ref{prop:causal-obstruction} therefore gives \(\Lambda_{c_1}^T=\Lambda_{c_2}^T\) for every \(T>0\).
Since \(\chi\) is supported in the interior of \(K_0\), we have \(a_1=a_2\) near \(\partial\Omega\), while \(0<a_2\leq a_1\) and \(a_1\not\equiv a_2\) follow from \(0<\delta<1\) and \(\chi\not\equiv0\).
The choice \(b(t)=e^{-Mt}\) is positive, satisfies \(b(0)=1\), and has total effective time \(1/M<\infty\).
\end{proof}

\begin{remark}
Suppose \(n\geq3\) and that two models with a common temporal factor satisfy \eqref{eq:boundary-identification}, for the same \(\varphi\), for every \(T>0\).
If
\[
\int_0^\infty b(t)\,\mathrm{d}t
>
2\max\{\operatorname{rad}(h_1),\operatorname{rad}(h_2)\},
\qquad
h_j=a_j^{-2}g_j,
\]
then the same inequality holds with \(S_{T_0}\) for some finite \(T_0\), and Theorem~\ref{thm:simultaneous-recovery}, applied at \(T_0\), gives simultaneous recovery.
\end{remark}

The corollary and remark confirm that long-time recovery is governed by total effective time.

\subsection{Failure at the critical endpoint on unknown manifolds}
\label{sec:endpoint-observation}

The subcritical perturbations above do not address the critical endpoint, where equality gives recovery on a fixed Euclidean background.
On an unknown manifold, the deepest hypersurface can instead carry a gluing ambiguity whose first boundary return occurs only at the excluded endpoint \(2R\).
The following theorem realizes this ambiguity even for the time-independent unit wave speed.

\begin{theorem}
\label{thm:endpoint-gluing-obstruction}
For every \(n\geq3\) and \(R>0\), there exist smooth compact connected \(n\)-dimensional Riemannian manifolds \((M_1,g_1)\) and \((M_2,g_2)\), each with smooth nonempty boundary, and a boundary diffeomorphism \(\varphi:\partial M_1\to\partial M_2\) such that
\[
\operatorname{rad}(g_1)=\operatorname{rad}(g_2)=R,
\]
and the full boundary Dirichlet-to-Neumann maps for the unit wave speed satisfy
\[
\Lambda_{g_1,1}^{2R}
=
P_\varphi^{-1}\Lambda_{g_2,1}^{2R}P_\varphi
\]
on the open observation interval \((0,2R)\), where \((P_\varphi f)(z,t)=f(\varphi^{-1}(z),t)\).
However, there is no isometry \(F:(M_1,g_1)\to(M_2,g_2)\) satisfying \(F|_{\partial M_1}=\varphi\).
\end{theorem}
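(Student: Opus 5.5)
Following the section's own hint, I would build the ambiguity on the deepest hypersurface. Fix a closed $(n-1)$-manifold $N$ that carries a fixed-point-free isometric involution $\iota$; the round sphere $S^{n-1}$ with the antipodal map is the natural choice. Let $C=N\times[0,2R]$ carry the product metric $\mathrm{d}r^2+g_N$. This is a cylinder with two boundary components, $N\times\{0\}$ and $N\times\{2R\}$, and its boundary filling radius is $R$, attained exactly on the middle slice $\Sigma=N\times\{R\}$.

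The two manifolds are obtained by cutting $C$ along $\Sigma$ and regluing:
\begin{enumerate}
\item[(1)] $M_1=C$ itself, that is, the two halves $N\times[0,R]$ and $N\times[R,2R]$ glued by the identity on $\Sigma$.
\item[(2)] $M_2$ is the same two halves glued by the map $(y,R)\mapsto(\iota y,R)$.
\end{enumerate}
Because $\iota$ is an isometry and the metric is a product near $\Sigma$, the metric $g_2$ is smooth. The map $\varphi$ is the identity on each boundary component. Since $n\ge3$, $N$ is connected, so both $M_j$ are connected, and $\operatorname{rad}(g_j)=R$ in both cases.

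To show that the Dirichlet-to-Neumann maps agree on $(0,2R)$, I would use finite speed of propagation, as in Lemma~\ref{lem:finite-propagation}, but with the cone estimate run on a Riemannian manifold. Fix a source $f$. For $t<R$ the solution vanishes near $\Sigma$. I would split $f=f_0+f_{2R}$ according to the two boundary components and argue by linearity. Consider $f_0$: on $N\times[0,R]$ the two solutions solve the same problem until the wave reaches $\Sigma$ at time $R$. After that, $u_2$ differs from $u_1$ only by the gluing across $\Sigma$. Any influence of the gluing must travel from $\Sigma$ back to a boundary component, which takes at least time $R$ more. So the Neumann traces agree on $(0,2R)$.

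To make this rigorous, I would not compare solutions directly. Instead I would build a common solution region. Define $D_t$ as the set of points at distance less than $t$ from the boundary; the solutions are supported in $\overline{D_t}$. The key observation is that for $t<2R$ the Neumann trace depends only on $u$ restricted to the domain of dependence of the boundary. The region of $(x,t)$ with $\operatorname{dist}(x,\partial M)+\ldots$ that is causally connected to $\partial M\times(0,2R)$ from both directions avoids $\Sigma$. Concretely, I would set $w=u_2\circ\Psi-u_1$ on the open set $\{\operatorname{dist}(x,\partial M)<2R-t\}$, where $\Psi$ is the natural identification of the complement of $\Sigma$ (the identity on each open half). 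I would then apply the backward-cone energy estimate to $w$: it solves the wave equation with zero Dirichlet data and vanishing Cauchy data at $t=0$. Any backward cone with apex $(x,t)$, $t<2R-\operatorname{dist}(x,\partial M)$, stays in one half and avoids $\Sigma$. Hence $w=0$ there, and in particular near $\partial M\times(0,2R)$, which gives equal Neumann traces.

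The main obstacle is the non-isometry claim. I would argue that an isometry $F$ with $F|_{\partial M_1}=\varphi=\mathrm{Id}$ must map each inward normal geodesic $r\mapsto(y,r)$ from $N\times\{0\}$ to the corresponding normal geodesic of $M_2$. This holds because $F$ preserves inward unit normals at the boundary and geodesics are determined by initial data. In $M_1$ this geodesic reaches the other boundary at the point $(y,2R)$. In $M_2$ it crosses $\Sigma$ through the twist and arrives at $(\iota y,2R)$. So $F(y,2R)$ would equal $(\iota y,2R)\ne(y,2R)=\varphi(y,2R)$, a contradiction since $\iota$ has no fixed points.

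Two points need checking for this conclusion as stated. First, the statement requires only that $F$ restrict to $\varphi$, which is exactly what the contradiction uses. Second, the statement, which fixes $\varphi$, requires non-isometry only for that $\varphi$, and in fact $M_1$ and $M_2$ may be abstractly isometric via another boundary map. The statement permits this.
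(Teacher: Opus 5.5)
Your construction is the special case \(k_\pm(r)\equiv k_0\) (round sphere, \(\iota\) antipodal) of the paper's. The radius computation, the smoothness of \(g_2\), and the non-extension argument are sound. Your normal-geodesic argument is a valid and more concrete substitute for the paper's argument that \(F\) must be the identity on each cut piece: the inward normal geodesic from \((y,0)\) ends at \((y,2R)\) in \(M_1\) but at \((\iota y,2R)\) in \(M_2\). Your remark that \(M_1\) and \(M_2\) may be isometric through a different boundary map is correct and harmless.

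The problem is the step you offer as the rigorous Dirichlet-to-Neumann comparison. The claim that every backward cone with apex \((x,t)\), \(t<2R-\operatorname{dist}(x,\partial M)\), stays in one half and avoids \(\Sigma\) is false. If \(x\) is near \(\partial M\) and \(R<t<2R-\operatorname{dist}(x,\partial M)\), the base of the cone has radius \(t>\operatorname{dist}(x,\Sigma)\), so the cone crosses \(\Sigma\) into the other half. Relatedly, \(w=u_2\circ\Psi-u_1\) is defined only off \(\Sigma\), while your region \(\mathcal U=\{(x,t):\operatorname{dist}(x,\partial M)<2R-t\}\) contains \(\Sigma\times[0,R)\). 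As written, the energy estimate is applied to a function not known to solve the wave equation across the seam. The argument also never uses the fact that actually makes the gluing invisible: the waves vanish near \(\Sigma\) before time \(R\).

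The repair is the first stage of Lemma~\ref{lem:endpoint-seam-propagation}, which you state only heuristically.

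1. By finite propagation from the boundary, \(u_1\) and \(u_2\) vanish on the open set \(\{(x,t):\operatorname{dist}(x,\partial M)>t\}\), which contains \(\Sigma\times[0,R)\).
2. Since \(\mathcal U\) meets \(\Sigma\times[0,2R)\) exactly in \(\Sigma\times[0,R)\), the function \(w\) extends by zero to a smooth solution of the \(g_1\)-wave equation on all of \(\mathcal U\). It has zero Cauchy data at \(t=0\) and zero Dirichlet data.
3. Because the boundary distance is \(1\)-Lipschitz, the backward cone of any apex \((x,t)\in\mathcal U\) stays in \(\mathcal U\). In particular it reaches \(\Sigma\) only at times \(t'\leq t-(R-\operatorname{dist}(x,\partial M))<R\), where \(w\equiv0\).
4. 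The cone estimate then gives \(w=0\) on \(\mathcal U\), hence near \(\partial M\times(0,2R)\), so the Neumann traces agree.

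With this correction your single-cone argument is equivalent to the paper's two-stage argument. The transmission data on the seam are identical before time \(R\), and after that the discrepancy cannot return to the boundary before \(2R\).
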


Here the cut pieces and their local metrics agree; only the deepest identification differs.
The following lemma shows that finite propagation makes this change invisible.

\begin{lemma}
\label{lem:endpoint-seam-propagation}
Let \((N_\pm,g_\pm)\) be compact Riemannian manifolds with
\[
\partial N_\pm=B_\pm\dot\cup H_\pm,
\qquad
\operatorname{dist}_{g_\pm}(B_\pm,H_\pm)\geq R.
\]
For \(\ell=1,2\), let \(\psi_\ell:H_+\to H_-\) be an isometry of the induced boundary metrics, and assume that identifying \(H_+\) with \(H_-\) by \(\psi_\ell\) produces a smooth Riemannian manifold \((M_\ell,g_\ell)\).
Identify \(\partial M_\ell=B_+\dot\cup B_-\) through the fixed pieces.
Then
\[
\Lambda_{g_1,1}^{2R}=\Lambda_{g_2,1}^{2R}.
\]
\end{lemma}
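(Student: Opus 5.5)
The plan is to show that, for every admissible source \(f\) on \((B_+\dot\cup B_-)\times[0,2R]\), the two solutions \(u_1,u_2\) on \(M_1,M_2\) coincide on a neighbourhood of the boundary, in fact on the region \(\{\operatorname{dist}(\cdot,B_\pm)<R-\text{(elapsed margin)}\}\), for all times \(t<2R\). We will build a candidate solution on \(M_2\) from \(u_1\) and compare.

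\emph{Step 1: reduction to one-sided problems.}
Write \(f=f_++f_-\) with \(f_\pm\) supported on \(B_\pm\). By linearity it suffices to treat \(f=f_+\). By finite propagation (energy estimates on backward cones, as in Lemma~\ref{lem:finite-propagation}, now for \(\Delta_{g_\ell}\) with unit speed), \(u_\ell^{f_+}(\cdot,t)\) is supported in \(\{x:\operatorname{dist}_{g_\ell}(x,B_+)\leq t\}\). Since \(\operatorname{dist}(B_+,H_+)\geq R\), for \(t<R\) this support lies in \(N_+\) away from the seam. Hence on \([0,R)\) both solutions equal the same function \(v\) on \(N_+\), extended by zero. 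The two gluings are invisible here because \(v\) vanishes near \(H_+\).

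\emph{Step 2: the seam-generated discrepancy.}
For \(t\in[R,2R)\) the wave reaches the seam and the solutions may differ beyond it. Let \(w=u_2-\tilde u_1\), where \(\tilde u_1\) transports \(u_1\) through the fixed pieces. The cleanest way is to mimic Proposition~\ref{prop:causal-obstruction}. Pick a smooth cutoff \(\theta\) on \(N_+\), equal to \(1\) on \(\{\operatorname{dist}(\cdot,B_+)\le R-\epsilon\}\) and \(0\) near \(H_+\). Then \(\theta u_1\) is a well-defined function on both \(M_1\) and \(M_2\), since its support avoids the seam. It solves the wave equation with forcing \(F=[\Box,\theta]u_1\). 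This forcing is supported in the collar \(\{R-\epsilon\le \operatorname{dist}(\cdot,B_+)\}\), and by Step 1 it vanishes for \(t<R-\epsilon\).

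Thus on each \(M_\ell\), \(u_\ell-\theta u_1\) has zero Dirichlet data and solves the wave equation with forcing \(-F\). The forcing is supported at distance \(\ge R-\epsilon\) from \(B_+\) and hence at distance \(\geq R-\epsilon\) from \(\partial M_\ell\) (for \(B_-\) it sits beyond the seam at distance \(\ge R\) from \(B_-\)). The source starts at time \(R-\epsilon\), so finite propagation gives vanishing normal derivative of \(u_\ell-\theta u_1\) on \(\partial M_\ell\) for \(t<2R-2\epsilon\). Therefore \(\partial_\nu u_1=\partial_\nu(\theta u_1)=\partial_\nu u_2\) on \((0,2R-2\epsilon)\). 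Letting \(\epsilon\downarrow0\) gives equality on \((0,2R)\).

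\emph{Main obstacle.}
The delicate point is the finite-propagation statement on an abstract Riemannian manifold with a source located near the deepest hypersurface, with sharp timing \(s_0+d\) rather than a Euclidean ball estimate. I would redo the backward-cone argument of Lemma~\ref{lem:finite-propagation} with distance-function sublevel sets \(\{\operatorname{dist}_{g_\ell}(\cdot,\operatorname{supp}_xF)<t-s_0\}\) in place of Euclidean balls. Since distance functions are only Lipschitz, I would run this by a smoothing or approximation argument, or simply cite the standard domain-of-dependence result for \(\partial_t^2-\Delta_g\) with Dirichlet data (e.g.\ via Tataru/Kachalov–Kurylev–Lassas). One also needs that distance from \(\partial M_\ell\) to \(\operatorname{supp}_xF\) is at least \(R-\epsilon\) in both glued manifolds. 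This holds because any path from \(B_\pm\) to the collar either stays in \(N_+\) or crosses the seam, and each of these costs at least \(R-\epsilon\).
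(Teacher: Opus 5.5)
Your proof is correct, but it carries out the two-leg causality argument differently from the paper.

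The paper cuts \(M_\ell\) along the seam. It encodes smoothness across \(H_\pm\) as transmission conditions and uses finite propagation once to see that \(u_{\ell,\pm}\) vanish near \(H_\pm\) for \(s<R\). It then regards \(v_\pm=u_{1,\pm}-u_{2,\pm}\) as solutions on the cut pieces \(N_\pm\), with zero data on \(B_\pm\) and Dirichlet data on \(H_\pm\) supported in \(s\geq R\). A second finite-propagation step, from \(H_\pm\) back to \(B_\pm\), gives \(v_\pm=0\) near \(B_\pm\) for \(s<2R\).

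You never cut the manifold. The cutoff \(\theta\) makes \(\theta u_1\) a smooth function on both glued manifolds, so the seam is replaced by the interior commutator source \([\Box,\theta]u_1\). This source lies at depth at least \(R-\epsilon\) from \(\partial M_\ell\) in either gluing and is switched on at time \(R-\epsilon\). A single interior-source domain-of-dependence estimate on each smooth \(M_\ell\), the same mechanism as Lemma~\ref{lem:finite-propagation}(b) and Proposition~\ref{prop:causal-obstruction}, then gives \(\partial_\nu u_1=\partial_\nu u_2\) on \((0,2R-2\epsilon)\).

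Your version pays with an \(\epsilon\)-limit and a one-sided reduction. The reduction is dispensable: a cutoff equal to one near \(B_+\cup B_-\) and zero near \(H_+\cup H_-\) handles general \(f\) at once. In return, you only use finite propagation for problems whose Dirichlet data sit on the genuine boundary. This avoids the transmission formalism and the inhomogeneous Dirichlet problems on the artificial boundary \(H_\pm\).

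The paper's version is shorter and has no \(\epsilon\)-loss. The sharp, distance-based finite-propagation statement you flag as the main obstacle is the same standard fact the paper simply cites, so neither route needs more there.
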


\begin{proof}
Fix a boundary source \(f\), and let \(u_\ell\) be the corresponding solution on \(M_\ell\).
Write \(u_{\ell,\pm}=u_\ell|_{N_\pm}\), and let \(\nu_\pm\) be the outward unit normals of \(N_\pm\) along \(H_\pm\).
The smoothness of \(u_\ell\) across the seam is equivalent to the transmission conditions
\[
u_{\ell,+}
=
u_{\ell,-}\circ\psi_\ell,
\qquad
\partial_{\nu_+}u_{\ell,+}
=
-\bigl(\partial_{\nu_-}u_{\ell,-}\bigr)\circ\psi_\ell
\quad\text{on }H_+\times(0,2R).
\]
Since the initial data vanish and \(\operatorname{dist}_{g_\pm}(B_\pm,H_\pm)\geq R\), finite propagation \cite[Section~1.2]{feizmohammadi2021recovery} gives
\[
u_{\ell,\pm}=0
\quad\text{near }H_\pm
\quad\text{for }0<s<R.
\]
Thus the transmission traces for the two gluings agree for \(0<s<R\).
Set
\[
v_\pm=u_{1,\pm}-u_{2,\pm}.
\]
On each cut piece, \(v_\pm\) satisfies the homogeneous wave equation, has zero initial data and zero Dirichlet data on \(B_\pm\), and can acquire nonzero transmission data on \(H_\pm\) only for \(s\geq R\).
A second application of finite propagation gives
\[
v_\pm=0
\quad\text{near }B_\pm
\quad\text{for }0<s<2R.
\]
Consequently,
\[
\partial_\nu u_1=\partial_\nu u_2
\quad\text{on }(B_+\dot\cup B_-)\times(0,2R).
\]
Since \(f\) was arbitrary, the Dirichlet-to-Neumann maps agree.
The endpoint \(2R\), at which the first discrepancy may arrive, is not included in the observation interval.
\end{proof}

\begin{proof}[Proof of Theorem~\ref{thm:endpoint-gluing-obstruction}]
Let \(\Sigma\) be a closed connected \((n-1)\)-manifold carrying a metric \(k_0\) with a nontrivial isometry \(\iota\).
Choose smooth families \(k_\pm(r)\), \(0\leq r\leq R\), of Riemannian metrics on \(\Sigma\) such that \(k_\pm(r)=k_0\) near \(r=R\), and set
\[
N_\pm=[0,R]\times\Sigma,
\qquad
g_\pm=\mathrm{d}r^2+k_\pm(r).
\]
Form \(M_1\) by identifying \(\{R\}\times\Sigma\) in the two copies by \(\operatorname{Id}\), and form \(M_2\) by using \(\iota\).
The product form of the metrics near \(r=R\) ensures that both glued metrics are smooth.
Their boundaries are identified through
\[
B_+\dot\cup B_-
=
(\{0\}\times\Sigma)_+\dot\cup(\{0\}\times\Sigma)_-.
\]
Let \(\varphi:\partial M_1\to\partial M_2\) denote this fixed boundary identification.
Set
\[
H_\pm=(\{R\}\times\Sigma)_\pm.
\]

Figure~\ref{fig:endpoint-gluing-obstruction} illustrates the construction: the cut pieces \(N_+\) and \(N_-\), their metrics, and the observed boundary \(B_+\dot\cup B_-\) are fixed, while only the map identifying \(H_+\) with \(H_-\) is changed.
\begin{figure}[htbp]
\centering
\includegraphics[width=0.98\textwidth]{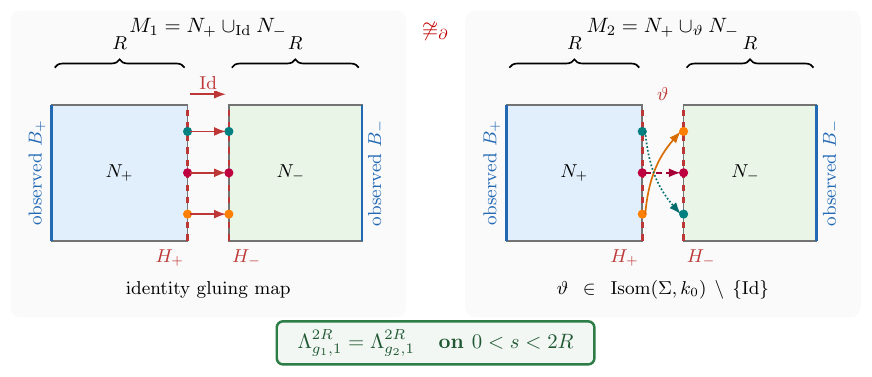}
\caption{Two identifications of \(H_+\) with \(H_-\).
The observed boundary and the metrics on \(N_+\dot\cup N_-\) are the same.
The identification map is either \(\operatorname{Id}\) or \(\iota\).
Since \(\operatorname{dist}_{g_\pm}(B_\pm,H_\pm)=R\), any discrepancy caused by the identification can first reach the observed boundary only at time \(2R\), the excluded endpoint of the measured interval.}
\label{fig:endpoint-gluing-obstruction}
\end{figure}

Let \(x=(r,y)\in N_\pm\).
Every curve from \(x\) to \(B_\pm\) has length at least \(r\), since \(|\nabla r|=1\), and the radial segment has length \(r\).
Any curve from \(x\) to \(B_\mp\) has length at least
\[
(R-r)+R=2R-r\geq r.
\]
It follows that
\[
\operatorname{dist}_{g_\ell}(x,\partial M_\ell)=r,
\]
and hence
\[
\operatorname{rad}(g_1)=\operatorname{rad}(g_2)=R.
\]

Suppose that an isometry \(F:M_1\to M_2\) extends the prescribed boundary identification.
Since distance to the boundary is preserved, \(F\) maps the hypersurface at distance \(R\) to itself.
It also preserves each component of its complement because it fixes \(B_+\) and \(B_-\) pointwise.
The restriction of \(F\) to either cut piece fixes \(B_\pm\) pointwise.
Its differential is the identity there: it fixes the tangent space of \(B_\pm\) and the inward unit normal.
An isometry of a connected Riemannian manifold is determined by its value and differential at one point, so \(F\) is the identity on each cut piece.
Compatibility with the two identification maps would then imply \(\iota=\operatorname{Id}\), a contradiction.

Applying Lemma~\ref{lem:endpoint-seam-propagation} with
\[
B_\pm=\{0\}\times\Sigma,
\qquad
H_\pm=\{R\}\times\Sigma,
\qquad
\psi_1=\operatorname{Id},
\qquad
\psi_2=\iota
\]
gives
\[
\Lambda_{g_1,1}^{2R}
=
P_\varphi^{-1}\Lambda_{g_2,1}^{2R}P_\varphi.
\]

The proof is complete.
\end{proof}

\begin{proof}[Proof of Corollary~\ref{cor:euclidean-recovery-consequences}]
Part~\textup{(i)} is Corollary~\ref{cor:fixed-euclidean-recovery}, and part~\textup{(ii)} is Corollary~\ref{cor:uniform-finite-time}.
\end{proof}

\begin{proof}[Proof of Theorem~\ref{thm:sharpness-obstructions}]
Parts~\textup{(i)}--\textup{(iv)} are, respectively, Theorem~\ref{thm:two-dimensional-conformal-obstruction}, Theorem~\ref{thm:causal-obstruction}, Corollary~\ref{cor:infinite-time-threshold}, and Theorem~\ref{thm:endpoint-gluing-obstruction}.
\end{proof}

\section*{Acknowledgments}

The work of L. Chen was partly supported by the National Natural Science Foundation of China (No.~12622108) and a grant from Beijing Institute of Technology (No.~2022CX01002).
The work of Y. Jiang was supported by the Hong Kong RGC Project JRFS2627-1S06.
The work of H. Liu was supported by the Hong Kong RGC General Research Funds (projects 11311122, 12301420, and 11300821).

\raggedbottom
\bibliographystyle{plain}
\bibliography{ref}

\end{document}